% SIAM Article Template
\documentclass[review]{siamart1116}

% Information that is shared between the article and the supplement
% (title and author information, macros, packages, etc.) goes into
% ex_shared.tex. If there is no supplement, this file can be included
% directly.

% SIAM Shared Information Template
% This is information that is shared between the main document and any
% supplement. If no supplement is required, then this information can
% be included directly in the main document.

% Packages and macros go here

\usepackage{latexsym}
\usepackage{amsmath}
\usepackage{amssymb}
\usepackage{amsfonts}
\usepackage[latin1]{inputenc}
\usepackage{mathrsfs}
\usepackage{amsfonts}
\usepackage{graphicx}
\usepackage{colortbl,dcolumn}
\usepackage{amsmath}
\usepackage{amsfonts}
\usepackage{graphicx}
\usepackage{algorithmic}
\ifpdf
  \DeclareGraphicsExtensions{.eps,.pdf,.png,.jpg}
\else
  \DeclareGraphicsExtensions{.eps}
\fi

\newtheorem{tm}{Theorem}[section]
\newtheorem{rk}{Remark}[section]

\newtheorem{prop}{Proposition}[section]
\newtheorem{lm}{Lemma}[section]
\newtheorem{cor}{Corollary}[section]

\newcommand{\E}{\mathbb E}
\newcommand{\PP}{\mathbb P}
\newcommand{\N}{\mathbb N}
\newcommand{\R}{\mathbb R}

\newcommand{\Z}{\mathbb Z}
\newcommand{\bi}{\mathbf i}
\newcommand{\bs}{\mathbf s}

\newcommand{\LL}{\mathcal L}

\newcommand{\HH}{\mathbb H}

\newcommand{\FFF}{\mathscr F}
\newcommand{\<}{\langle}
\renewcommand{\>}{\rangle}

\ifpdf
  \DeclareGraphicsExtensions{.eps,.pdf,.png,.jpg}
\else
  \DeclareGraphicsExtensions{.eps}
\fi

% Declare title and authors, without \thanks
\newcommand{\TheTitle}{Analysis of A Splitting Scheme for  Damped Stochastic Nonlinear Schr\"odinger Equation with Multiplicative Noise} 
\newcommand{\TheAuthors}{Jianbo Cui and  Jialin Hong}

% Sets running headers as well as PDF title and authors
\headers{}{\TheAuthors}

% Title. If the supplement option is on, then "Supplementary Material"
% is automatically inserted before the title.
\title{{\TheTitle}\thanks{Submitted to the editors in DATE.
\funding{This work was supported by National Natural Science Foundation of China (No. 91630312, No. 91530118 and No. 11290142).}}}

% Authors: full names plus addresses.
\author{
Jianbo Cui 
\thanks{1. LSEC, ICMSEC, 
Academy of Mathematics and Systems Science, Chinese Academy of Sciences, Beijing,  100190, China\qquad
2. School of Mathematical Science, University of Chinese Academy of Sciences, Beijing, 100049, China 
(\email{jianbocui@lsec.cc.ac.cn}(corresponding author), \email{hjl@lsec.cc.ac.cn})}
\and 
Jialin Hong \footnotemark[2]
}

%
\iffalse
\author{
  Dianne Doe\thanks{Imagination Corp., Chicago, IL
    (\email{ddoe@imag.com}, \url{http://www.imag.com/\string~ddoe/}).}
  \and
  Paul T. Frank\thanks{Department of Applied Mathematics, Fictional
    University, Boise, ID (\email{ptfrank@fictional.edu},
    \email{jesmith@fictional.edu}).}
  \and
  Jane E. Smith\footnotemark[3]
}
\fi
%

\usepackage{amsopn}

%%% Local Variables: 
%%% mode:latex
%%% TeX-master: "ex_article"
%%% End: 

% Optional PDF information
\ifpdf
\hypersetup{
  pdftitle={\TheTitle},
  pdfauthor={\TheAuthors}
}
\fi

% The next statement enables references to information in the
% supplement. See the xr-hyperref package for details.

%\externaldocument{ex_supplement}

% FundRef data to be entered by SIAM
%<funding-group>
%<award-group>
%<funding-source>
%<named-content content-type="funder-name"> 
%</named-content> 
%<named-content content-type="funder-identifier"> 
%</named-content>
%</funding-source>
%<award-id> </award-id>
%</award-group>
%</funding-group>
\usepackage{ulem}%% used for comments
\usepackage{subfigure}
\usepackage{graphicx}
\usepackage{enumerate}
\usepackage{amsmath,bm}
\usepackage{latexsym}
\usepackage{float}

\begin{document}

\maketitle

% REQUIRED
\begin{abstract}
In this paper, we investigate  the damped stochastic nonlinear Schr\"odinger(NLS) equation with multiplicative noise and its splitting-based approximation.
When the damped effect is large enough,  we prove that the solutions of both the damped stochastic NLS equation and 
the splitting scheme
are exponentially stable and possess some exponential integrability.  These properties show that the strong order of the scheme is $\frac 12$ and independent of time. Additionally, we analyze the regularity of the Kolmogorov equation with respect to the stochastic nonlinear Schr\"odinger  equation. As a consequence,  the weak order of the scheme is shown to be 1 and independent of time. 
%In particular, when $T$ or noise effect is small enough, the strong convergence rate can reach the same order as the weak convergence rate.
\end{abstract}

% REQUIRED
\begin{keywords}Damped stochastic nonlinear Schr\"odinger equation,
Exponential integrability,
 Strong order,
Weak order,
Kolmogorov equation.
\end{keywords}

% REQUIRED
\begin{AMS}
{60H35}, {35Q55, 60H15, 65M12.}
\end{AMS}

\section{Introduction}
In many fields of economics and the natural sciences,  stochastic partial differential equations (SPDEs) play important roles. 
Since many SPDEs  can only be solved numerically,  it is a crucial research problem to construct and study discrete numerical approximation schemes which converge with strong and weak convergence rates to the solutions of  such SPDEs.
For SPDEs with monotone coefficients, there exist fruitful results on strong error analysis of temporal and spatial numerical approximations (see, e.g., \cite{ ACLW16,  BAK16, CHL16a, BD06, Gyon99, GK03}). 
However, there exists only a few results in the scientific literature which establish strong and weak convergence rates for a time discrete approximation scheme in the case of an SPDE with a nonglobally monotone nonlinearity 
(see, e.g., \cite{ CHL16b, CHLZ17, Dor12, HJ14, HJW13, JP16}). This motives us to construct strong and weak approximations for this kind of SPDE.

The stochastic nonlinear Schr\"odinger (NLS) equation, as a representative SPDE,  models the propagation of nonlinear dispersive waves in inhomogeneous or random media (see, e.g., \cite{BCI95}). 
In \cite{BD99} and \cite{BRZ16, BD03}  it was proved that the stochastic NLS equation admits a unique solution in $\HH$ and $\HH^1$, respectively.
Recently, \cite{CHP16, CHL16b} gave the global well-posedness  of the one-dimensional stochastic NLS equation in $\HH^2$.
In this paper, we focus on strong and weak approximations of the following one-dimensional  damped stochastic nonlinear equation with multiplicative noise:
\begin{align}\label{dnls}
du&=(\bi \Delta u +\bi \lambda |u|^2u-\alpha u)dt+\bi udW(t)  \;\; \text{in} \;\;  \mathbb R \times (0,\infty); \\\nonumber
u(0)&=u_0 \;\; \text{in} \;\; \mathbb R ,
\end{align}
where  
 $\lambda=1$ or $-1$ corresponds to focusing or defocusing cases, respectively, and $\alpha(\cdot)$ is  a real-valued function. 
When studying the propagation of waves over very long distance in random media, the damping term $-\alpha u$ cannot be neglected (see e.g. \cite{DO05}).
The diffusion term represents the fluctuation effect of a physical process in random media, where $W=\{W(t):\ t\in [0,T]\}$ is an $L^2(\mathbb R; \R)$-valued $Q$-Wiener process on  a stochastic basis $(\Omega, \FFF, \FFF_t, \PP)$; i.e., there exists an orthonormal basis $\{e_k\}_{k\in \N_+} $ of $L^2(\mathbb R; \R)$ and a sequence of mutually independent, real-valued Brownian motions $\{\beta_k\}_{k\in \N_+} $ such that $W(t)=\sum\limits_{k\in \N_+} Q^{\frac12}e_k\beta_k(t)$, $t\in [0,T]$.

There have been many works concentrating on construction and analysis of numerical approximations for the stochastic NLS equation. Paper \cite{BD06} studies 
a type of Crank--Nicolson  semidiscrete schemes and  shows that for stochastic NLS equation with Lipschitz coefficients, these Crank--Nicolson  type schemes have strong order $\frac 12$ in general and order $1$ if the noise is additive and that the weak order is always 1. 
In order to inherit the symplectic structure of the stochastic NLS equation, \cite{CH16} studies symplectic Runge--Kutta methods and obtains the convergence theorem for the Lipschitz cases. 
Paper \cite{AC16} studies an explicit  exponential scheme and shows that  it preserves the trace formula for stochastic linear NLS equation with additive noise.
For a stochastic NLS equation with non-Lipschitz or 
nonmonotone coefficients, some papers have constructed   strong numerical approximations  and obtained convergence rates in a certain sense such as pathwise or in probability weaker than in strong sense (see, e.g., \cite{CHP16, BD06, Liu13a} and references therein). 
%Since the drift coefficient of Eq. \eqref{nls} is non-monotone, 
%To deal with the non-monotone drift coefficient of Eq. \eqref{nls}, all the authors of the aforementioned papers 
%apply techniques such as truncation or derive convergence rates in certain sense such as pathwise or in probability which is weaker than in strong sense.Using the integral representation idea, \cite{Liu13b} proposes a splitting scheme to the stochastic NLS equation and gets the  first order strong convergence in Lipschitz cases.
Progress has been made in \cite{CHL16b, CHLZ17}, where the authors obtained  strong convergence rates of the spatial centered difference method, the spatial Galerkin method and a temporal splitting method  for a conservative stochastic NLS equation.

\iffalse 
We refer to \cite{HJ14, HJW16} and references therein for the exponential integrability of stochastic ordinary differential equations with non-monotone coefficients and their numerical approximations. 
\fi

In this article, we 
apply the splitting ideas  in \cite{CHLZ17, GK03} to approximating  \eqref{dnls} and aim to show  the strong and weak order of this
splitting scheme. 
The key to obtaining strong and weak convergence rates of numerical schemes for SPDEs with nonmonotone coefficients is to obtain some a priori estimates and  exponential integrability of exact and numerical solutions (see, e.g., \cite{CHJ13,  CHL16b, CHLZ17,HJ14,HJW13,JP16}). 
%we first study the long-time behaviors for Eq. \eqref{dnls} and show the solution of  Eq. \eqref{dnls} is exponentially stable and exponentially integrable. 
On the one hand, we prove some a priori estimations of the exact solution of \eqref{dnls}, as well as those of the numerical solution, to get the time-independent strong error estimation. As a consequence, the solution of  \eqref{dnls} is shown to be exponentially stable.
On the other hand, we show the exponential integrability properties of  exact and numerical solutions by an exponential integrability lemma established in \cite[Corollary 2.4]{CHJ13}; see also \cite[Lemma 3.1]{CHL16b}.
This type of exponential integrability is also useful to get the strongly continuous dependence  on initial data of both  exact and numerical solutions and to deduce Gaussian tail estimations of these solutions (see e,.g. \cite{CHJ13, CHL16b, CHLZ17}).
To obtain the weak convergence order of the proposed scheme, we study  the regularity of the transformed  Kolmogorov equation of the damped  stochastic NLS equation with nonmonotone coefficient. Based on this regularity result, we prove that  the weak order of the proposed scheme is first order and independent of time. To the best of our knowledge, this is the first  weak convergence order result of temporal approximations for the stochastic NLS equation with nonmonotone coefficients driven by multiplicative noise.

%Our strong convergence rate is optimal which is verified by numerical tests in Section \ref{sec-num}.

The rest of this paper is organized as follows. 
In Section 2, we prove that the damped stochastic NLS equation is exponentially stable and exponentially integrable. Section 3 is devoted to obtaining some a priori estimates of the numerical solution in Sobolev norms. Then the time-independent strong error of the solutions is given. In Section 4, we study the regularity 
of the corresponding Kolmogorov equation with respect to 
the damped stochastic NLS equation. Then we show that the weak order of the scheme is first order and time-independent.
% At last, in section 5, numerical experiments are provided to validate theoretical results.
\section{Some properties for  damped  stochastic NLS equation}

We first introduce some frequently used notation and assumptions.
The norm and inner product of $\HH:=L^2(\mathbb R; \mathbb C)$ are denoted by $\|\cdot\|$ and $\<u, v\>:=\Re\left[\int_{\mathbb R} \overline u(x) v(x) dx\right]$, respectively. $L^p:=L^p(\mathbb R; \mathbb C)$, $p\ge 1$, is the corresponding Banach space. Throughout this paper, we assume that $T$ is a fixed positive number, $u_0 \in \HH^{\bs} $ is a deterministic function and $Q^\frac12 \in \LL_{2}^\bs$ with $\bs $ being a nonnegative integer, i.e.,
\begin{align*}
\|Q^\frac12\|_{\LL_2^\bs}^2
:=\sum_{k\in \N_+}  \|Q^\frac12 e_k\|_{\HH^{\bs}}^2<\infty,
\end{align*}
where $\{e_k\}_{k\in \N_+} $ is any orthonormal basis of $L^2(\mathbb R; \R)$ and $\HH^\bs:= \HH^\bs (\mathbb R; \mathbb C)$ is the usual Sobolev space. In this paper,  $a$ and $b$ are positive numbers.
We use $C$ and $C'$ to denote  generic constants, independent of the time step size 
$\tau$, which differ from one place to another.
{In some places of this paper, the computations are formal but could be justified rigorously by  truncated techniques and  approximation arguments (see e.g. \cite{BD06}).

For damped stochastic NLS equations with additive noise, \cite{DO05} studies the long-time behavior of its solution and obtains the ergodicity of the weakly damped NLS equation. It is natural to study the long-time behaviors of the damped stochastic NLS equation with multiplicative noise, i.e., \eqref{dnls}.
In this section, we want to investigate the mutual influence among the damping effect, the cubic nonlinearity and the noise intensity and further study the long-time behaviors. This is  our other  motivation  for considering Eq. \eqref{dnls}. It should be mentioned that  when $\alpha(x)=\frac 12 F_Q(x):=\frac 12\sum_{k=1}^{\infty}(Q^{\frac 12}e_k)^2(x)$, the stochastic NLS equation \eqref{dnls} has the conserved quantity charge (see \cite{BD03}), i.e., $\|u(t)\|^2=\|u_0\|^2$, $t<T$, a.s. 
Next, we mainly focus on some a priori estimates and long-time behaviors of the exact solution for  \eqref{dnls}.

\begin{lm}\label{char}
 Let $\|\alpha\|_{L^{\infty}}<\infty$, $Q^{\frac 12}\in \LL_{2}^1$, and $u_0 \in \HH$. Then $\|u\|$ is bounded a.s. in any finite interval $[0,T]$.
Moreover, if $\sup\limits_{x\in\mathbb R}(\frac12 F_Q(x)- \alpha(x))\le 0$, then the upper bound is independent of   
T. 
\end{lm}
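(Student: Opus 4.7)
The plan is to apply It\^o's formula to $\|u\|^2$ (after a standard truncation/approximation argument, which the authors have already alluded to) and observe that the structure of the equation makes the evolution of $\|u\|^2$ essentially deterministic.

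First, I would compute $d\|u\|^2$ term by term. Writing $\<u, du\> = \Re\int \bar u \, du\, dx$ and adding the It\^o correction, three of the drift contributions vanish:
\begin{align*}
2\<u, \bi \Delta u\> &= -2\,\Im\int \bar u\,\Delta u\,dx = 2\,\Im\int |\nabla u|^2\,dx = 0, \\
2\<u, \bi \lambda |u|^2 u\> &= -2\lambda\,\Im\int |u|^4\,dx = 0,
\end{align*}
and the martingale part is also zero, because $\<u,\bi u Q^{\frac12}e_k\> = \Re\int \bi|u|^2 Q^{\frac12}e_k\,dx = 0$ (the integrand is purely imaginary since $Q^{\frac12}e_k$ is real-valued). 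Only the damping term $-2\<u,\alpha u\>$ and the It\^o correction $\sum_k\|\bi u Q^{\frac12}e_k\|^2 = \int F_Q(x)|u(x)|^2\,dx$ survive. This yields the deterministic identity
\begin{equation*}
\|u(t)\|^2 = \|u_0\|^2 + \int_0^t \int_{\mathbb R}\bigl(F_Q(x) - 2\alpha(x)\bigr)|u(s,x)|^2\,dx\,ds.
\end{equation*}

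Next I would bound $F_Q - 2\alpha$ in $L^\infty$. Since we are in one spatial dimension, Sobolev embedding gives $\|Q^{\frac12}e_k\|_{L^\infty} \lesssim \|Q^{\frac12}e_k\|_{\HH^1}$, so $\|F_Q\|_{L^\infty} \le \sum_k\|Q^{\frac12}e_k\|_{L^\infty}^2 \lesssim \|Q^{\frac12}\|_{\LL_2^1}^2 < \infty$. Combined with $\|\alpha\|_{L^\infty}<\infty$, a Gr\"onwall argument applied to the identity above gives $\|u(t)\|^2 \le \|u_0\|^2\exp(Mt)$ with $M := \|F_Q - 2\alpha\|_{L^\infty}$, which is the pathwise boundedness on $[0,T]$.

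For the time-independent bound under the hypothesis $\sup_x(\frac12 F_Q(x) - \alpha(x)) \le 0$, the integrand $F_Q - 2\alpha$ is pointwise nonpositive, so the identity immediately gives $\|u(t)\|^2 \le \|u_0\|^2$ for all $t$, uniformly in $T$.

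The only real subtlety is the rigorous justification of It\^o's formula; since $\|u\|^2$ is only a functional of $u$ at the level of $\HH$ regularity, one would work with a smooth truncation $\chi_R(\|u\|^2)$ together with a spectral Galerkin approximation, use that each approximation satisfies the same cancellations above, and then pass to the limit using the global-in-time existence in $\HH$ recalled in the introduction. All the algebraic cancellations above are stable under these approximations, so the main obstacle is really just bookkeeping rather than a conceptual difficulty.
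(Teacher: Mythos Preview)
Your proposal is correct and follows essentially the same approach as the paper: apply It\^o's formula to $\|u\|^2$, observe that the dispersive, nonlinear, and martingale contributions all vanish so that only the damping term and the It\^o correction survive, then use Sobolev embedding to bound $\|F_Q\|_{L^\infty}$ and conclude by Gr\"onwall (and directly in the nonpositive case). Your write-up is in fact slightly more explicit than the paper's in displaying each cancellation and in flagging the truncation/approximation needed to justify It\^o's formula rigorously.
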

\begin{proof}
By the It\^o formula, we have 
\begin{align*}
\frac 12\|u(t)\|^2
&=\frac 12\|u_0\|^2
+\int_0^t \<u,\bi \Delta u+\bi\lambda |u|^2u-\alpha u\>ds\\
&\qquad+\int_0^t \<u,\bi u dW(s)\>
+\int_0^t\frac 12\sum_{k}\<\bi uQ^{\frac 12}e_k, \bi uQ^{\frac 12}e_k\>ds\\
&=\frac 12\|u_0\|^2
+\int_0^t\int_{\mathbb R}|u|^2(\frac12  F_Q-\alpha)dxdt.
\end{align*}
\end{proof}
The Sobolev embedding theorem, $Q^{\frac 12}\in \LL^{1}_2$ and $\|F_Q\|_{L^{\infty}}<\infty$, combined with  $\|\alpha\|_{L^{\infty}}<\infty$,
imply that
\begin{align*}
\frac 12\|u(t)\|^2
&\le \frac 12\|u_0\|^2
+ \int_0^t\|u\|^2(\frac12 \|F_Q\|_{L^{\infty}}+\|\alpha\|_{L^{\infty}})dt.
\end{align*}
Then Gronwall inequality yields that
\begin{align*}
\|u(t)\|^2
&\le \exp\big( T\|F_Q\|_{L^{\infty}}+2T\|\alpha\|_{L^{\infty}}\big)\|u_0\|^2.
\end{align*}
When  $\sup\limits_{x\in\mathbb R}(\frac12 F_Q(x)- \alpha(x))\le 0$, a similar argument yields that
\begin{align*}
\|u(t)\|^2
&\le\|u_0\|^2.
\end{align*}

\begin{cor}\label{cor-char}
If in addition, $\sup\limits_{x\in\mathbb R}(\frac12 F_Q(x)- \alpha(x))\le -a$,
then the charge is exponentially stable.
\end{cor}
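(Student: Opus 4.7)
The plan is to follow the same Itô-formula computation used in the proof of Lemma~\ref{char} and then exploit the strengthened hypothesis to produce an exponential decay instead of a mere uniform bound. More precisely, the identity
\begin{align*}
\tfrac12\|u(t)\|^2 = \tfrac12\|u_0\|^2 + \int_0^t\int_{\mathbb R} |u|^2\bigl(\tfrac12 F_Q(x) - \alpha(x)\bigr) dx\, ds
\end{align*}
derived in Lemma~\ref{char} is the only starting point I need: the Laplacian and the cubic nonlinearity contribute zero via $\<u,\bi\Delta u\>=0$ and $\<u,\bi\lambda|u|^2 u\>=0$, and the quadratic variation of the stochastic integral exactly cancels the Itô correction except for the term $\tfrac12 F_Q - \alpha$.

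Next I would invoke the assumption $\sup_{x}(\tfrac12 F_Q(x) - \alpha(x)) \le -a$ to bound the integrand pointwise, which yields the differential inequality
\begin{align*}
\tfrac12\|u(t)\|^2 \le \tfrac12\|u_0\|^2 - a\int_0^t \|u(s)\|^2 ds.
\end{align*}
An application of Gronwall's inequality then delivers $\|u(t)\|^2 \le \|u_0\|^2 e^{-2at}$, which is exactly the exponential stability of the charge with explicit rate $2a$.

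I expect no real obstacle here: the Itô identity has already been justified in Lemma~\ref{char} (formally, but under the paper's truncation/approximation conventions), and the only new input is the sign condition on $\tfrac12 F_Q - \alpha$. The one subtlety to double-check is that the estimate is pathwise a.s.\ (rather than only in expectation), which follows because the stochastic-integral term disappears exactly in the Itô identity for $\|u\|^2$ thanks to $\<u,\bi u\,dW\>=0$. Consequently the Gronwall step can be applied $\omega$-wise, giving the pathwise exponential decay claimed in the corollary.
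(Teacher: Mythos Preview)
Your proposal is correct and follows essentially the same approach as the paper: start from the It\^o identity for $\|u(t)\|^2$ established in Lemma~\ref{char}, use the hypothesis $\sup_x(\tfrac12 F_Q-\alpha)\le -a$ to obtain $\|u(t)\|^2\le\|u_0\|^2-2a\int_0^t\|u(s)\|^2\,ds$, and conclude via Gronwall that $\|u(t)\|^2\le e^{-2at}\|u_0\|^2$. Your added remark that the estimate is pathwise because the martingale term $\langle u,\bi u\,dW\rangle$ vanishes identically is a useful clarification that the paper leaves implicit.
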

\begin{proof}
Similar to Lemma \ref{char}, we  obtain 
\begin{align*}
\|u(t)\|^2\le \|u_0\|^2-2a\int_0^t\|u(s)\|^2ds,
\end{align*}
which yields that 
\begin{align}\label{char-exp}
\|u(t)\|^2
\le \exp(-2at)\|u_0\|^2.
\end{align}
\end{proof}

When $\alpha(x)=\frac 12F_Q(x)$,   \eqref{dnls} becomes the stochastic NLS equation with a 
conserved  quantity: charge. One cannot expect the 
following long-time behaviors of the exact solution in this conserved case.
\iffalse
In this case, we remark that the invariant measure is not unique. Indeed, if we take 
different data $u_0$ and $v_0$ such that $\|u_0\|_{\HH}\neq\|v_0\|_{\HH}$, then
the corresponding solution $u$ and $v$ possess invariant measures due to the uniform estimation
in $\mathbb H^1$-norm. It is not difficult to prove the corresponding invariant measures $\mu_{u}$ and $\mu_{v}$ have different supports in $\HH$, which shows the invariant measure is not unique. 
\fi
When $\alpha(x)=a+\frac 12F_Q(x)$, \eqref{dnls} satisfies the condition of Corollary \ref{char} and thus the charge is exponentially decaying.
The above results inspire us to consider the long-time behavior of $u$, such as its corresponding invariant measure and ergodicity. Actually, direct calculation yields that the Dirac measure at $0$ is one of the invariant measures. The uniqueness  of the  invariant can be obtained as follows.

\begin{prop}
	\label{h1}
	Assume that $\alpha  \in \HH^1$,  $\sup\limits_{x\in\mathbb R}(\frac12 F_Q(x)- \alpha(x))\le -a$,  $u_0\in \HH^1$ and $Q^{\frac 12}\in \LL_2^{1 }$. For any $p\ge 2$, we have 
	\begin{align*}
	\sup_{t\in[0,\infty)}\E\Big[\big\|u(t)\big\|_{\mathbb H^1}^p\Big]
	\le C(1+\|u_0\|^p_{\mathbb H^{ 1}}+\|u_0\|^{3p}).
	\end{align*}
\end{prop}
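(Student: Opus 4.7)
The plan is to apply It\^o's formula to the modified energy $H(u)=\tfrac12\|\nabla u\|^2-\tfrac{\lambda}{4}\|u\|_{L^4}^4$, which is the conserved Hamiltonian of the deterministic cubic NLS equation, and then to exploit both the coercivity produced by the damping $-\alpha u$ and the exponential decay of the charge already provided by Corollary \ref{cor-char}. First I would compute $dH(u)$. The Hamiltonian structure annihilates the contribution of $\bi\Delta u+\bi\lambda|u|^2u$ to the drift, so only the damping term and the It\^o correction from the multiplicative noise remain. Schematically,
\begin{align*}
dH(u)=\Bigl[-\!\int_{\mathbb R}\alpha|\nabla u|^2\,dx+\lambda\!\int_{\mathbb R}\alpha|u|^4\,dx-\mathrm{Re}\!\int_{\mathbb R}\!\nabla\alpha\cdot\nabla u\,\bar u\,dx+I_Q(u)\Bigr]dt+dM_t,
\end{align*}
where $I_Q(u)$ collects the quadratic corrections from the noise (controllable under $Q^{\frac12}\in\LL_2^1$) and $M_t$ is a local martingale.

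Next I would use the hypothesis $\sup_x(\tfrac12F_Q-\alpha)\le -a$, which forces $\alpha\ge a>0$ uniformly, to extract the coercive piece $-\int\alpha|\nabla u|^2\le -a\|\nabla u\|^2$. All remaining drift terms are controlled via the one-dimensional Gagliardo--Nirenberg inequalities $\|u\|_{L^4}^4\le C\|u\|^3\|\nabla u\|$ and $\|u\|_{L^\infty}^2\le C\|u\|\|\nabla u\|$, producing bounds of the form $\delta\|\nabla u\|^2+C(\delta)(\|u\|^6+\|u\|^2)$ for arbitrarily small $\delta>0$. Since $\|u(t)\|^2\le e^{-2at}\|u_0\|^2$ by Corollary \ref{cor-char}, these error terms are integrable in time uniformly on $[0,\infty)$. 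Taking expectations and applying Gronwall's lemma then produces a uniform-in-time bound on $\E[H(u(t))]$ for $p=2$.

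For general $p\ge 2$, I would apply It\^o to the strictly positive functional $(H(u)+c_0\|u\|^2+c_1)^{p/2}$, with $c_0,c_1$ chosen so that the base is bounded below away from zero---this is needed in the focusing case $\lambda=1$ where $H$ is only lower bounded modulo a $\|u\|^6$ correction. The martingale increment is dominated by Burkholder--Davis--Gundy, and the Gronwall closure is again driven by the negative drift $-(a-\delta)\|\nabla u\|^2$. Finally, a Gagliardo--Nirenberg estimate of the form $\|u\|_{\HH^1}^p\le C(H(u)^{p/2}+\|u\|^{3p}+\|u\|^p)$ converts the energy bound into the stated $\HH^1$ estimate, with the factor $\|u_0\|^{3p}$ arising precisely from the Young-inequality step $\|u_0\|_{L^4}^{4}\le C\|u_0\|^3\|\nabla u_0\|\le \delta\|\nabla u_0\|^2+C\|u_0\|^6$ applied to the initial datum.

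The main obstacle is the focusing case $\lambda=1$: the modified energy alone does not control $\|\nabla u\|^2$, so one must simultaneously track the charge and use its exponential decay to absorb the $\|u\|_{L^4}^4$ contributions uniformly in time. A secondary technical point is bounding the cross term $\mathrm{Re}\int\nabla\alpha\cdot\nabla u\,\bar u\,dx$; this is where the hypothesis $\alpha\in\HH^1$ enters, through the estimate $|\int\nabla\alpha\cdot\nabla u\,\bar u\,dx|\le\|\nabla\alpha\|\|\nabla u\|\|u\|_{L^\infty}\le C\|\nabla\alpha\|\|\nabla u\|^{3/2}\|u\|^{1/2}$, which is then absorbed into the coercive term via Young's inequality.
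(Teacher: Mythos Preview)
Your approach coincides with the paper's: apply It\^o to $H(u)$, extract a coercive $-a\|\nabla u\|^2$ from the damping hypothesis, control the remaining drift by Gagliardo--Nirenberg and Young, feed in the exponential charge decay from Corollary~\ref{cor-char}, close by Gronwall, and for $p>2$ repeat with a power of $H$ together with Burkholder--Davis--Gundy.

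There is one imprecision worth flagging. The It\^o correction on $\tfrac12\|\nabla u\|^2$ produces the term $\tfrac12\int_{\mathbb R}F_Q\,|\nabla u|^2\,dx$, which in your presentation is hidden inside $I_Q(u)$. This term is \emph{not} of the form $\delta\|\nabla u\|^2+C(\delta)(\|u\|^6+\|u\|^2)$ with small $\delta$: its coefficient is $\tfrac12\|F_Q\|_{L^\infty}$, which may well exceed $a$. The paper does not separate these contributions; it groups $-\alpha$ and $\tfrac12F_Q$ at the outset and invokes the pointwise hypothesis directly to obtain $\int_{\mathbb R}(\tfrac12F_Q-\alpha)|\nabla u|^2\,dx\le -a\|\nabla u\|^2$. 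Using only the weaker consequence $\alpha\ge a$ while treating the $\tfrac12F_Q|\nabla u|^2$ piece as a controllable remainder does not close the estimate. Once you regroup in this way, everything proceeds exactly as you describe.
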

\begin{proof}
For simplicity, we only prove the case $p=2$. 
One can apply the It\^o formula to the  appropriate power of the energy  functional $H(u):\frac 12\|\nabla u\|^2-\frac \lambda 4\|u\|_{L^4}^4$ and apply the
Burkholder--Davis--Gundy inequality to get the desired result for $p>2$.
Similar to \cite{CHL16b},
thanks to Gagliardo--Nirenberg inequality $\|u\|_{L^4}^4\le2\|\nabla u\|\|u\|^3$, we need only prove the uniform boundedness of the energy functional $H(u(t))$. The
It\^o formula yields that  
\begin{align*}
&\mathbb E [H(u(t))]-H(u_0)\\
&=\int_0^t \E \Big[\big\<\nabla u,\nabla u (\frac {F_Q}2-\alpha)\big\>\Big]ds+
\int_0^t\E \Big[\big \<\nabla u, u  (\sum_{k}Q^{\frac 12}e_k \nabla Q^{\frac 12}e_k- \nabla\alpha)\big\>\Big]ds\\
&\quad +\int_0^t \frac 12\sum_k\E \Big[\big\<u, u |\nabla Q^{\frac 12}e_k|^2 \big\>\Big] ds
+\int_0^t \lambda\E \Big[\big\<|u|^2u, u(\alpha-\frac {F_Q}2) \big\>\Big]ds.
\end{align*}
The H\"older, Gagliardo--Nirenberg and Young inequalities  and Sobolev embedding theorem imply that for  $a>\epsilon>0$,
 \begin{align*}
 \E [H(u(t))]&\le H(u_0)-(a-\epsilon)\int_0^t\E\big[\|\nabla u\|^2\big]ds
+C(\epsilon)\int_0^t\E \bigg[\|u\|^2\big(\|Q^{\frac 12}\|^4_{\LL^1_2}\\
&\quad +\|Q^{\frac 12}\|^8_{\LL^1_2}+\|\nabla \alpha\|^4\big)+\|u\|^6\|\alpha-\frac {F_Q}2\|^2_{L^{\infty}}\bigg]ds.
 \end{align*}
By the fact that 
$\frac 12\|\nabla u\|^2 -\frac 14\|u\|_{L^4}^4\le H(u) \le \frac 12\|\nabla u\|^2+\frac 14\|u\|_{L^4}^4$ and the Young inequality, we have for small $\eta>0$,
\begin{align*}
 \E [H(u(t))]
&\le  H(u_0)-\frac {2(a-\epsilon)}{1+\eta}\int_0^t\E\big[H(u)\big]ds+C(\epsilon,\eta)\int_0^t\E \bigg[\|u\|^2\big(
\|Q^{\frac 12}\|_{\LL^1_2}^4\\
&\quad+\|Q^{\frac 12}\|_{\LL^1_2}^8+\|\nabla \alpha\|^4\big)+\|u\|^6\big(1+\|\alpha-\frac {F_Q}2\|^2_{L^{\infty}}\big)\bigg]ds.
\end{align*}
The Gronwall inequality, together with the charge evolution law in Corollary \ref{cor-char}, yields that 
\begin{align}\label{exp-dec}
 &\E [H(u(t))]\le e^{-\frac {2(a-\epsilon)}{1+\eta}t}H(u_0)
+C(\epsilon,\eta)e^{-\frac {2(a-\epsilon)}{1+\eta}t}\int_0^t \bigg(e^{(\frac {2(a-\epsilon)}{1+\eta}-2a)s}\|u_0\|^2\big(\|Q^{\frac 12}\|_{\LL^1_2}^4\\ \nonumber 
&\quad\quad+\|Q^{\frac 12}\|_{\LL^1_2}^8+\| \alpha\|_{\HH^2}^4\big)+e^{(\frac {2(a-\epsilon)}{1+\eta}-6a)s}\|u_0\|^6\big(1+\|\alpha-\frac {F_Q}2\|^2_{L^{\infty}}\big)\bigg)ds\\\nonumber 
&\quad \quad\le e^{-\frac {2(a-\epsilon)}{1+\eta}t}C(\epsilon,\eta,\alpha,Q)\big(1+H(u_0)+\|u_0\|^6\big).
\end{align} 	
Finally, the Gagliardo--Nirenberg  and Young inequalities and the Sobolev embedding theorem imply  the uniform boundedness for the $p$-moment of $\| u\|_{\HH^1}$.		
\end{proof}

Next we show that \eqref{dnls} admits a unique invariant measure $\delta_0$ and a unique stationary solution $0$ in $\HH^1$ similarly  \cite{Dap06}.

\begin{cor}
Under the same condition as Proposition \ref{h1}, the following statements hold:
\begin{enumerate}[(i)]
\item We have
\begin{align*}
\lim_{t\to \infty} P_t\phi(w)=\phi(0),\;\; w\in \HH^1, \;\; \phi \in C_b(\HH^1),
\end{align*}
where $P_t$ is the Markov semigroup associated with the solution $u(t)$.
\item  $\delta_0$ is the unique invariant measure for $P_t$.
\item For any Borel probability measure $\nu \in \mathscr P(\HH^1)$, we have 
  \begin{align*}
\lim_{t\to \infty} \int_{\HH} P_t\phi(w)\nu(dw) =\phi(0).
\end{align*}
\item There exists $b>0$ such that for any functional $\phi \in C_b^1(\HH^1)$, we have 
\begin{align*}
\bigg| P_t\phi(w)-\phi(0)\bigg|\le C\|\phi\|_{C_b^1}e^{-bt}(1+\|w\|_{\HH^1}).
\end{align*}
\end{enumerate}

\end{cor}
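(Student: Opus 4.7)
The plan is to reduce all four parts to a single $\HH^1$-exponential decay estimate of the form
\[
\E\big[\|u(t)\|_{\HH^1}^2\big] \le Ce^{-\gamma t}\big(1 + \|w\|_{\HH^1}\big)^q
\]
for some $\gamma, q > 0$, valid for the solution $u$ of \eqref{dnls} starting from $u(0)=w$, and then to deduce (i)--(iii) by soft continuity arguments and (iv) by a Lipschitz estimate.

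The heart of the matter is the decay step. Corollary \ref{cor-char} already provides the pathwise charge decay $\|u(t)\|^2 \le e^{-2at}\|w\|^2$, while estimate \eqref{exp-dec} established inside the proof of Proposition \ref{h1} gives $\E[H(u(t))] \le Ce^{-\gamma_0 t}(1 + H(w) + \|w\|^6)$ for some $\gamma_0 > 0$, with $H(u) = \frac12\|\nabla u\|^2 - \frac{\lambda}{4}\|u\|_{L^4}^4$. In the focusing case $\lambda = 1$ the functional $H$ is not coercive, so I would combine the Gagliardo--Nirenberg inequality $\|u\|_{L^4}^4 \le 2\|\nabla u\|\|u\|^3$ with Young's inequality to obtain $H(u) \ge \frac14\|\nabla u\|^2 - \frac14\|u\|^6$, equivalently $\|\nabla u\|^2 \le 4H(u) + \|u\|^6$. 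Taking expectations and inserting the two independent exponential decays yields the claimed bound on $\E[\|u(t)\|_{\HH^1}^2]$.

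Parts (i) and (iii) are then immediate. The decay in mean implies $u(t) \to 0$ in $\HH^1$ in probability, and since $\phi \in C_b(\HH^1)$ is continuous at $0$ and bounded, dominated convergence gives $P_t\phi(w) = \E[\phi(u(t))] \to \phi(0)$; integrating this pointwise limit against $\nu$ and invoking dominated convergence a second time (using $|P_t\phi| \le \|\phi\|_\infty$) proves (iii). For (ii), $\delta_0$ is invariant because $u \equiv 0$ is a stationary solution of \eqref{dnls}; conversely, for any invariant measure $\mu \in \mathscr{P}(\HH^1)$ the identity $\int\phi\,d\mu = \int P_t\phi\,d\mu$ combined with (i) forces $\int\phi\,d\mu = \phi(0) = \int\phi\,d\delta_0$ for every $\phi \in C_b(\HH^1)$, so $\mu = \delta_0$. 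For (iv), the mean value theorem along the segment $[0,u]$ gives $|\phi(u) - \phi(0)| \le \|\phi\|_{C_b^1}\|u\|_{\HH^1}$ whenever $\phi \in C_b^1(\HH^1)$, whence $|P_t\phi(w) - \phi(0)| \le \|\phi\|_{C_b^1}\E[\|u(t)\|_{\HH^1}]$, and Jensen together with the first step yields the exponential rate.

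The main obstacle is the lack of coercivity of $H$ in the focusing regime: the exponential decay of $\E[H(u(t))]$ does not directly translate into decay of $\E[\|\nabla u(t)\|^2]$, and one must carefully couple to the charge decay from Corollary \ref{cor-char} to close the estimate. A secondary technical issue arises in (iv), where the naive route produces a polynomial factor $(1 + \|w\|_{\HH^1})^q$ rather than the linear one stated; this can be traded for a linear factor at the cost of a smaller rate $b$ by splitting the expectation over $\{\|u(t)\|_{\HH^1} \le 1\}$ and its complement and using the trivial bound $|\phi(u) - \phi(0)| \le 2\|\phi\|_\infty$ on the latter.
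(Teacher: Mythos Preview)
Your approach is correct and in fact more economical than the paper's. The paper proves convergence by showing that $\{u(t_n)\}$ is Cauchy in $L^2(\Omega;\HH^1)$: it writes $u(t_m)-u(t_n)$ via the mild formulation with the damped Schr\"odinger semigroup $S_a(t)=e^{(\bi\Delta-a)t}$, bounds each of the resulting integrals, and then invokes estimate \eqref{exp-dec} to make the whole expression $\le Ce^{-bt_n}(1+H(u_0)+\|u_0\|^6)$. You bypass the mild formulation entirely and go straight to $\E[\|u(t)\|_{\HH^1}^2]\to 0$ exponentially by combining \eqref{exp-dec} with the pathwise charge decay \eqref{char-exp} through the Gagliardo--Nirenberg bound $\|\nabla u\|^2\le 4H(u)+\|u\|^6$. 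This is shorter, avoids any semigroup manipulations, and is precisely the ingredient the paper needs anyway to control the term $\E[\|u(t_n)\|_{\HH^1}^2]$ in its own Cauchy estimate. Parts (i)--(iii) then follow from your argument exactly as you outline.

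There is one genuine (if minor) gap in your treatment of (iv). Your core estimate produces
\[
|P_t\phi(w)-\phi(0)|\le \|\phi\|_{C_b^1}\,\E[\|u(t)\|_{\HH^1}]\le C\|\phi\|_{C_b^1}e^{-\gamma t/2}(1+\|w\|_{\HH^1})^3,
\]
with a cubic rather than linear factor, and the fix you propose does not repair this: splitting over $\{\|u(t)\|_{\HH^1}\le 1\}$ and its complement and using $|\phi(u)-\phi(0)|\le 2\|\phi\|_\infty$ on the latter gives
\[
|P_t\phi(w)-\phi(0)|\le \|\phi\|_{C_b^1}\E[\|u(t)\|_{\HH^1}\wedge 1]+2\|\phi\|_\infty\,\PP(\|u(t)\|_{\HH^1}>1),
\]
and both summands, bounded via Markov's inequality by $\E[\|u(t)\|_{\HH^1}^2]$, still carry the same polynomial dependence on $\|w\|_{\HH^1}$. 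So the splitting does not trade the polynomial for a linear factor; it only bounds the same quantity differently. In fairness, the paper's own proof is sketchy on this point and its Cauchy estimate $\E[\|u(t_n)-u(t_m)\|_{\HH^1}^2]\le Ce^{-bt_n}(1+H(u_0)+\|u_0\|^6)$ also contains a degree-six polynomial in $\|u_0\|_{\HH^1}$, so the linear factor in the stated bound is not clearly established there either.
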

\begin{proof}
We show that for any time sequence $\{t_n\}_{n\in \N}$ with $\lim\limits_{n\to \infty} t_n= \infty$, 
the sequence $\{u(t_n)\}_{n\in \N}$ admits a unique limit.
For any $t_n\le t_m$, $n\le m$, by Minkowski and Young inequality and Sobolev embedding theorem, we have 
 \begin{align*}
 &\E [\|u(t_n)-u(t_m)\|_{\HH^1}^2]\\
& \le C \Bigg(
\E\big[\|(S_a (t_m-t_n)-I)u(t_n)\|_{\HH^1}^2\big]
+\E\big[\int_{t_n}^{t_m}\|S_a(t_m-s)\bi\lambda |u(s)|^2u(s)\|_{\HH^1}
 ds\big]^2 \\
 &\quad+\E\big[\int_{t_n}^{t_m}\|S_a(t_m-s)(-\alpha+a)u(s)\|_{\HH^1}ds\big]^2 +\E\big[\|\int_{t_n}^{t_m}S_a(t_m-s)\bi u(s)dW(s)\|_{\HH^1}^2\big]\Bigg)\\
 &\le C\E\big[\|u(t_n)\|_{\HH^1}^2\big]+
 C\E \big[\int_{t_n}^{t_m}e^{-a(t_m-s)}(\|u(s)\|_{\HH^1}+\|u(s)\|_{\HH^1}^3)ds\big]^2\\
 &\quad +C\int_{t_n}^{t_m} \E \big[e^{-2a(t_m-s)}\|u(s)\|_{\HH^1}^2\big]ds,
 \end{align*}
 where $S_a(t):= e^{\bi \Delta t-a t}$. 
The arguments and estimate \eqref{exp-dec} in Proposition  \ref{h1} yield that  for some  $b>0$, we have
 \begin{align*}
  \E [\|u(t_n)-u(t_m)\|_{\HH^1}^2]
&\le C(a,\eta,\alpha,Q)e^{-bt_n}\big(1+H(u_0)+\|u_0\|^6\big).
  \end{align*}
This implies that $\{u(t_n)\}_{n\in \N}$ is a Cauchy sequence and thus $\{u(t)\}_{t\in \R^+}$ admits at least a strong limit. Combining those with the exponential decay estimate 
\eqref{exp-dec}, we get $0$ is the unique strong limit of $\{u(t)\}_{t\in \R^+}$.
The strong mixing property is immediately obtained, and we finish the proof by the exponential decay estimate and strong mixing property.
\end{proof}

To get  the a priori estimates in $\HH^{\bs}$, we introduce the auxiliary Lyapunov functional $f(u):=\|\nabla^\bs u\|^2-\lambda\big\<(-\Delta)^{\bs-1}u,|u|^2u\big\>$ from \cite{CHL16b}.

\begin{prop}
\label{hs}
Assume that $\alpha \in \HH^{\bs} $, $\sup\limits_{x\in\mathbb R}(\frac12 F_Q(x)- \alpha(x))\le -a$, $Q^{\frac 12}\in\LL_2^\bs$ and $u_0\in \mathbb H^\bs$, $\bs\ge2 $.
 For any $p\ge 2$, we have 
\begin{align*}
\sup_{t\in[0,\infty)}\E\Big[\big\|u(t)\big\|_{\mathbb H^\mathbf s}^p\Big]
\le C(\alpha,Q)(1+\|u_0\|_{\HH^{\bs}}^p+\|u_0\|_{\HH^{\bs-1}}^{5p}).
\end{align*}
\end{prop}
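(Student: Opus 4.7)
The plan is to apply the It\^o formula to (an appropriate power of) the Lyapunov functional $f(u)=\|\nabla^{\bs}u\|^2-\lambda\<(-\Delta)^{\bs-1}u,|u|^2u\>$, exploit the fact that $f$ is designed as a conservation law for the deterministic NLS flow so that the leading dispersive terms cancel, and then use the damping to produce an exponential-decay drift. I would first treat the case $p=2$ and then bootstrap to general $p$ by applying It\^o to $(1+f(u))^{p/2}$ together with the Burkholder--Davis--Gundy inequality, exactly as in the $\HH^1$ case of Proposition \ref{h1}.

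Computing $df(u)$, the contribution of $\bi\Delta u$ to the linear part $\|\nabla^{\bs}u\|^2$ vanishes because $\<\nabla^{\bs}u,\nabla^{\bs}(\bi\Delta u)\>=0$, and the cross term coming from the cubic part of $f$ paired with $\bi\Delta u$ cancels exactly against the term coming from the linear part paired with $\bi\lambda|u|^2u$ (this is the standard derivation of the higher order conservation law for the cubic NLS). The damping $-\alpha u$ contributes $-2\<\nabla^{\bs}u,\nabla^{\bs}(\alpha u)\>$, and the It\^o correction from $\bi u\,dW$ together with $\bi udW$ paired with $\nabla^{\bs}u$ yield terms involving $F_Q$ and its derivatives. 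After integrating by parts, the sum of the linear deterministic drift and the It\^o correction produces the dissipative term $2\int_{\mathbb R}|\nabla^{\bs}u|^2(\tfrac12 F_Q-\alpha)\,dx\le -2a\|\nabla^{\bs}u\|^2$ plus lower order commutator terms involving $\nabla^{j}\alpha$ and $\nabla^{j}Q^{1/2}e_k$ for $j\le\bs$, all of which can be bounded by $C(\alpha,Q)\|u\|_{\HH^{\bs}}\|u\|_{\HH^{\bs-1}}$ and absorbed via Young's inequality.

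The main obstacle will be to control the nonlinear piece of $f$ and the nonlinear terms generated by the cubic nonlinearity in the It\^o expansion, which all have the form of cubic or higher pairings with weights of order $\alpha-\tfrac12 F_Q$. Here I would rely on the Gagliardo--Nirenberg interpolation
\[
|\<(-\Delta)^{\bs-1}u,|u|^2u\>|\le C\|u\|_{\HH^{\bs}}^{\theta}\|u\|_{\HH^{\bs-1}}^{4-\theta}
\]
together with Sobolev embeddings (using $\bs\ge 2$ so that $\HH^{\bs-1}\hookrightarrow L^{\infty}$ on $\mathbb R$), which for an appropriate exponent $\theta<2$ allows Young's inequality to yield
\[
|\lambda\<(-\Delta)^{\bs-1}u,|u|^2u\>|\le \tfrac12\|\nabla^{\bs}u\|^2+C\|u\|_{\HH^{\bs-1}}^{k}
\]
for some $k\le 10$. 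This simultaneously shows $f(u)\ge\tfrac12\|\nabla^{\bs}u\|^2-C\|u\|_{\HH^{\bs-1}}^{k}$ (so that controlling $f(u)$ is equivalent to controlling $\|u\|_{\HH^{\bs}}^2$ up to lower order) and allows all cubic remainder terms in the It\^o expansion to be absorbed by a fraction of the dissipation.

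Combining these estimates yields
\[
\frac{d}{dt}\E[f(u(t))]\le -2(a-\epsilon)\,\E[f(u(t))]+C(\alpha,Q,\epsilon)\bigl(1+\E\|u(t)\|_{\HH^{\bs-1}}^{10}\bigr),
\]
and by induction on $\bs$, starting from Proposition \ref{h1} and Corollary \ref{cor-char}, the right-hand forcing term is uniformly bounded (indeed exponentially small) in $t$ with a bound of the form $C(1+\|u_0\|_{\HH^{\bs-1}}^{5p})$. A Gronwall argument exactly parallel to \eqref{exp-dec} then gives the uniform-in-time bound
\[
\sup_{t\ge 0}\E[f(u(t))]\le C(\alpha,Q)\bigl(1+\|u_0\|_{\HH^{\bs}}^2+\|u_0\|_{\HH^{\bs-1}}^{10}\bigr),
\]
which translates into the claimed bound for $p=2$. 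The extension to $p\ge 2$ then follows by applying It\^o to $(1+f(u))^{p/2}$, using BDG to absorb the stochastic integral, and repeating the same interpolation and Gronwall argument; this produces the stated power $5p$ on the $\HH^{\bs-1}$-norm of the initial data.
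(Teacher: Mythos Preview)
Your approach is essentially the paper's: apply It\^o to the auxiliary functional $f$, extract the dissipative term $-(a-\epsilon)\|\nabla^{\bs}u\|^2$, control the nonlinear remainders by $C\|u\|_{\HH^{\bs-1}}^{10}$ via interpolation, and close by Gronwall plus induction on $\bs$. One small correction: $f$ is \emph{not} an exact conservation law for the deterministic cubic NLS flow---the cancellation you describe removes only the top-order term, and there remain lower-order nonlinear residues of the type $\langle(-\Delta)^{\bs-1}u,\bi|u|^{2}\Delta u\rangle$, $\langle(-\Delta)^{\bs-1}u,\bi|\nabla u|^{2}u\rangle$, $\langle(-\Delta)^{\bs-1}u,\bi|u|^{4}u\rangle$ coming from the drift itself (no $\alpha-\tfrac12F_Q$ weight); these are precisely the terms the paper bounds by $\epsilon f(u)+C\|u\|_{\HH^{\bs-1}}^{10}$, and your Gagliardo--Nirenberg/Young step handles them the same way.
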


\begin{proof}
We prove the uniform boundedness by induction. 
Assume that the $p$-moment of $\|u\|_{\mathbb H^{\bf s-1}}$ is uniformly controlled. 
For simplicity, we show the case $p=2$ under the $\mathbb H^{\bs}$-norm.	
Applying It\^o formula to the functional 
$f(u(t))$, 
we can get the terms similar to those in \cite{CHL16b}.
Similar arguments yield that for $\bs\ge 2$,
\begin{align*}
&\E[(f(u(t)))]\\
&\le f(u_0)-(a-\epsilon)\int_0^t\E \big[\|\nabla^\bs u\|^2\big]ds
+C(\epsilon,\alpha, Q)\int_0^t\E\big[\|u\|_{\HH^{\bs-1}}^4
+\|u\|_{\HH^{\bs-1}}^{10}\big]ds. 
\end{align*}
Since $f(u)\le \| \nabla^{\bs} u\|^2+ C\|u\|_{\HH^{\bs-1}}^4$, iterative arguments similar to those in Proposition \ref{h1} complete the proof.
\end{proof}

\begin{rk}
 Due to the particular structure of charge and energy, 
 the exponential decay estimates in $\HH^\bs$, $\bs\ge 2$ can  also be obtained similarly to Proposition \ref{h1} by iterative arguments. This show that \eqref{dnls} is an ergodic system and admits the unique stationary solution 0
in $\HH^\bs$. This long-time behavior result still holds when 
we consider \eqref{dnls} in a bounded domain with homogeneous boundary condition. 
\end{rk}

Beyond  these a priori estimations,
 we need the  exponential integrability to  construct numerical schemes with strong  and weak convergence order similar to those in \cite{CHL16b,CHLZ17, HJ14}. We also note that this type of exponential integrability has many other applications (see e.g. \cite{CHJ13,  CHL16b, CHLZ17, Dor12, HJ14,HJW13,JP16}).

\begin{prop}\label{exp-u}
Assume   $\alpha\in \HH^2$,   $\sup\limits_{x\in\mathbb R}(\frac12 F_Q(x)- \alpha(x))\le -a$, $Q^\frac12\in \LL_2^2$, and $u_0\in \HH^1$.
There exist $\beta$ and $C$  depending on $\alpha, Q,$ and  $u_0$ such that 
 \begin{align}\label{exp-mom-u}
\sup_{t\in [0,\infty)}\E\left[\exp\bigg( e^{-\beta t}
H(u(t))\bigg)\right]
&\le C.
\end{align}
\end{prop}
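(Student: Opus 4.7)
The plan is to invoke the exponential integrability lemma of \cite[Corollary 2.4]{CHJ13} (compare \cite[Lemma 3.1]{CHL16b}) with Lyapunov functional $V:=H$ and a discount rate $\beta>0$ to be chosen. The lemma requires a pointwise inequality of the form
\begin{align*}
(\mathcal{A}H)(u) + \tfrac12\sum_{k\in\N_+}\big|\big\langle DH(u),\,\bi u Q^{\frac12}e_k\big\rangle\big|^2 \le \beta\,H(u) + C,
\end{align*}
where $\mathcal{A}$ denotes the generator of \eqref{dnls}. Since $H$ is not manifestly nonnegative in the focusing case ($\lambda=1$), I would first shift it: Gagliardo--Nirenberg ($\|u\|_{L^4}^4\le 2\|\nabla u\|\|u\|^3$) together with the charge bound $\|u(t)\|\le\|u_0\|$ from Corollary \ref{cor-char} yields $H(u(t))\ge\tfrac14\|\nabla u(t)\|^2-C(u_0)$, so the shifted functional $H+C(u_0)$ is nonnegative.

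Next I would compute $\mathcal{A}H$ by It\^o's formula exactly as in Proposition \ref{h1}. The Schr\"odinger cancellation $\langle DH(u),\bi\Delta u+\bi\lambda|u|^2u\rangle=0$ eliminates the conservative part of the drift, and the pointwise identity $\Re(\bar u\cdot\bi u)=0$ kills several trace terms, leaving
\begin{align*}
(\mathcal{A}H)(u) = \int\Big(\tfrac{F_Q}{2}-\alpha\Big)|\nabla u|^2\,dx - \lambda\int\Big(\tfrac{F_Q}{2}-\alpha\Big)|u|^4\,dx + R_1(u),
\end{align*}
where the remainder $R_1$ gathers contributions from $\nabla\alpha$ and from $\sum_k g_k\nabla g_k$, $\sum_k|\nabla g_k|^2$ (with $g_k:=Q^{\frac12}e_k$), and is controlled by $C(\|\alpha\|_{\HH^2}+\|Q^{\frac12}\|_{\LL_2^2}^2)\|u\|_{\HH^1}^2$. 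Under the assumption $\sup_x(\tfrac{F_Q}{2}-\alpha)\le -a$ the two leading terms are dominated by $-a\|\nabla u\|^2-a|\lambda|\|u\|_{L^4}^4$. For the diffusion-norm quantity, the cubic piece vanishes since $\langle|u|^2u,\bi u g_k\rangle=0$, and integration by parts together with the one-dimensional Sobolev embedding $\HH^1\hookrightarrow L^\infty$ bound the surviving Laplacian piece by $C\|Q^{\frac12}\|_{\LL_2^2}^2\|u\|_{\HH^1}^2$.

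Combining these ingredients with Young's inequality to absorb $R_1$ and the diffusion-norm term into the strictly negative drift $-a(\|\nabla u\|^2+|\lambda|\|u\|_{L^4}^4)$ yields the target pointwise inequality with $V=H+C(u_0)$, and the exponential integrability lemma then delivers \eqref{exp-mom-u}. The main obstacle is the quantitative absorption step: both $R_1$ and the diffusion-norm carry full $\|u\|_{\HH^1}^2$ factors that must be dominated by $-a\|\nabla u\|^2$ alone, which forces careful use of the uniform charge estimate from Corollary \ref{cor-char} to keep the residual $\|u\|^2$ contributions under control. In the focusing case the lack of direct coercivity of $H$ means the constants $\beta,C$ inherit dependence on $\|u_0\|$, which matches the statement.
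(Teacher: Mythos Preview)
Your overall strategy---apply the exponential integrability lemma with Lyapunov functional $H$ and verify a differential inequality for $\mathcal{A}H$ plus the squared diffusion norm---is exactly the paper's approach. However, one step is wrong and another needs adjustment.

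The error is the sign of the quartic drift term in the focusing case. The contribution is $\lambda\int(\alpha-\tfrac{F_Q}{2})|u|^4\,dx$, and under $\alpha-\tfrac{F_Q}{2}\ge a$ this is $\ge a\|u\|_{L^4}^4$ when $\lambda=1$, not $\le -a\|u\|_{L^4}^4$ as you claim. There is no sign to exploit; instead one must use Gagliardo--Nirenberg to write $\|u\|_{L^4}^4\le \epsilon\|\nabla u\|^2+C(\epsilon)\|u\|^6$ and control $\|u\|^6$ via the charge bound, which is exactly what the paper does. Your later plan to ``absorb into the strictly negative drift $-a(\|\nabla u\|^2+|\lambda|\|u\|_{L^4}^4)$'' therefore fails in the focusing case.

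The second point is that absorbing the diffusion-norm term $\tfrac12\sum_k\langle\nabla u,\bi u\nabla g_k\rangle^2$ (which after the charge bound gives $\tfrac12\|u_0\|^2\sum_k\|\nabla g_k\|_{L^\infty}^2\,\|\nabla u\|^2$) into $-a\|\nabla u\|^2$ would require the smallness assumption $\|u_0\|^2\sum_k\|\nabla g_k\|_{L^\infty}^2<2a$, which is not hypothesised. The paper avoids this by keeping the $e^{-\beta t}$ prefactor built into the lemma on this quadratic term and pairing it with the \emph{exponential} charge decay $\|u(t)\|^2\le e^{-2at}\|u_0\|^2$, so the coefficient becomes $\tfrac12 e^{-(\beta+2a)t}\|u_0\|^2\sum_k\|\nabla g_k\|_{L^\infty}^2$, bounded for $\beta\ge-2a$; the residual $\|\nabla u\|^2$ piece is then placed on the right as part of $\beta H(u)$, with $\beta$ chosen large enough depending on $\|u_0\|$. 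Likewise the remainder is a time-decaying $V(\epsilon,\eta,t,u_0)\sim e^{-2at}$ rather than a constant $C$. Your version with constant $C$ and no $e^{-\beta t}$ weight can be salvaged, but only by moving the $\|\nabla u\|^2$ residuals to $\beta H$ with $\beta$ large and positive, not by absorption into the drift.
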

\begin{proof}
Denote $\mu(u)=\bi \Delta u+\bi \lambda|u|^2u-\alpha u$ and $\sigma(u)=\bi uQ^{\frac 12}$. 
Simple calculations yield that
\begin{align*}
&DH(u)\mu(u)+\frac 12\text{tr} \big[D^2H(u)\sigma(u)\sigma(u)^*\big]
+\frac1{e^{\beta t}}\|\sigma^*(u)DH(u)\|^2\\
&= \big\<\nabla u,\nabla u (\frac {F_Q }2-\alpha)\big\>
-\sum_k\big\<u,\nabla u (Q^{\frac 12}e_k\nabla Q^{\frac 12}e_k-\nabla \alpha)\big>	\\
&\quad+\sum_k\big\<|\nabla Q^{\frac 12}e_k|^2, |u|^2\big\>+\big\<|u|^4,\alpha-\frac{F_Q }2 \big\>
+\frac 1{2e^{\beta t}}\sum_k\big\< \nabla u, \bi u\nabla Q^{\frac 12}e_k\big\>^2.
\end{align*}
The H\"older, Young and Gagliardo--Nirenberg inequalities, combined with  Corollary \ref{cor-char}, yield that
\begin{align*}
&DH(u)\mu(u)+\frac 12\text{tr} \big[D^2H(u)\sigma(u)\sigma(u)^*\big]
+\frac1{e^{\beta t}}\|\sigma^*(u)DH(u)\|^2\\
&\le -\left(a-\epsilon-\frac 1{2e^{(\beta+2a) t}}
\|u_0\|^2\sum_k\|\nabla Q^{\frac 12}e_k\|_{L^{\infty}}^2\right) \|\nabla u\|^2\\
&\quad+C(\epsilon)\|u_0\|^2e^{-2at}\left(\|Q^{\frac 12}\|_{\LL^2_2}^4
+\|\alpha\|_{\HH^2}^2
+\|u_0\|^4e^{-4at}\|\alpha-\frac {F_Q}2\|^2_{L^{\infty}}\right)
\end{align*} 
Let $\beta\ge -2a$. By the  Gagliardo--Nirenberg  and Young inequalities, we get 
\begin{align*}
&DH(u)\mu(u)+\frac 12tr \big[D^2H(u)\sigma(u)\sigma(u)^*\big]
+\frac1{e^{\beta t}}\|\sigma^*(u)DH(u)\|^2\\
&\le -\left(a-\epsilon-\frac 12\|u_0\|^2\sum_k\|\nabla Q^{\frac 12}e_k\|_{L^{\infty}}^2\right)\frac 2{1+\eta}H(u)
+C(\epsilon,\eta)\|u_0\|^2e^{-2at}\Bigg(\|Q^{\frac 12}\|_{\LL^2_2}^4\\
&\quad 
+\|\alpha\|_{\HH^2}^2+
\|u_0\|^4e^{-4at}\left(\|\alpha-\frac {F_Q}2\|^2_{L^{\infty}}+1\right)\Bigg)\\
&:=-\left(a-\epsilon-\frac 12\|u_0\|^2\sum_k\|\nabla Q^{\frac 12}e_k\|_{L^{\infty}}^2\right)\frac 2{1+\eta}H(u)
+V(\epsilon,\eta,t,u_0).
\end{align*} 
By    \cite[Lemma 3.1] {CHL16b},
we need $\beta\ge \frac {-2a+2\epsilon+\|u_0\|^2\sum\limits_k\|\nabla Q^{\frac 12}e_k\|_{L^{\infty}}^2}{1+\eta}$.
Thus there  always exist $\epsilon$ and $\eta$ such that
$-2a-\beta<0$ and 
\begin{align*}
\sup_{t\in [0,\infty)}\E\left[\exp\left(e^{-\beta t}H(u(t))\right)\right]
\le \E\left[\exp\left(H(u_0)+\int_0^te^{-\beta r}V(\epsilon,\eta,r,u_0)dr\right)\right]\le C.
\end{align*}
\end{proof}

\section{Strong convergence}
We use  a splitting idea similar to that in \cite{CHLZ17,GK03} to discretize \eqref{dnls} and obtain the strong convergence rate independent of the time domain. The key tool is applying the stability in $\HH^2$ and the exponential integrability of both numerical and exact solutions.
The main idea is to split \eqref{dnls} in $T_m=[t_m,t_{m+1})$, $t_m=m\tau$, $m\in \Z_M:=\{0,1,2,\dots, M-1\}$, into a deterministic NLS equation with random initial datum and a linear damped SPDE:
\begin{align}\label{nls-d}
du_\tau^D(t)
&=\left(\bi \Delta u_\tau^D(t)
+\bi \lambda |u_\tau^D(t)|^2 u_\tau^D(t) \right)dt,
\\
\label{nls-s}
du_\tau^S(t)
&=-\alpha u_\tau^S(t)dt+ \bi  u_\tau^S(t) dW(t).
\end{align}
For simplicity, we denote the solution operators of \eqref{nls-d} and \eqref{nls-s} in $T_m$ as $\Phi_{m,t-t_m}^D$ and $\Phi_{m,t-t_m}^S$, respectively.
Next we set the splitting process $u_\tau$ in $T_m$ as 
\begin{align}\label{ut}
u_\tau (t):=u_{\tau,m}^S(t):=(\Phi_{j,t-t_m}^S\Phi_{j,\tau}^D)\prod_{j=1}^{m-1}\big(\Phi_{j,\tau}^S\Phi_{j,\tau}^D\big)u_\tau(0),\quad t\in T_m,
\end{align}
and
\begin{align*}
u_{\tau}^D(t):=u_{\tau,m}^D(t):=\Phi_{j,t-t_m}^D\prod_{j=1}^{m-1}\big(\Phi_{j,\tau}^S\Phi_{j,\tau}^D\big)u_\tau(0),\quad t\in  \{t_m \cup T_m\}/t_{m+1}.
\end{align*}
For the sake of simplicity, we take the initial datum of the splitting process to be $u_\tau(0)=u_0$.
Iterating previous procedures, we obtain a splitting process 
$u_\tau=\{u_\tau(t):\ t\in [0,T]\}$, 
which is left-continuous with finite right-hand limits and $\FFF_t$-adapted.
 We  note that there are some results on numerically approximating SPDEs by splitting schemes (see  \cite{CV10, CHL17c, Dor12,  GK03,  Liu13a} and references therein).
Since  \eqref{nls-d}  has no analytic solution, we apply the Crank--Nicolson type scheme to temporally discretize  \eqref{nls-d}.
Based on the explicitness of the solution of  \eqref{nls-s}, we get the splitting Crank--Nicolson type scheme starting from $u_0$:
\begin{align}\label{splcn}
\begin{cases}
u^D_{m+1}
= u_m+ \bi \tau \Delta  u^D_{m+\frac12}+\bi \lambda \tau \frac {| u_m|^2+| u_{m+1}^D|^2}2  u^D_{m+\frac12},\\
u_{m+1}=\exp\left(-\alpha+\frac {F_Q}2+\bi (W_{t_{m+1}}-W_{t_m})\right) u_{m+1}^D,
\quad m\in \Z_M,
\end{cases}
\end{align}
with $ u_{m+\frac12}^D=\frac12(u_m+ u_{m+1}^D)$.
We can also get the continuous extension of $u_m$ as 
\begin{align*}
\widehat u_{\tau}(t):=\widehat u_{\tau,m}^S(t):=(\Phi_{j,t-t_m}^S\widehat {\Phi_{j,\tau}^D})\prod_{j=1}^{m-1}\big(\Phi_{j,\tau}^S\widehat {\Phi_{j,\tau}^D}\big)u_\tau(0),\quad t\in T_m,  
\end{align*}
where $\widehat {\Phi_{j,\tau}^D}$ is the solution operator of 
the Crank--Nicolson type scheme.

Throughout  this paper, we do not consider the spatial discretization 
since our approach and proof can be extended to the study of a fully discrete scheme as in \cite{CHLZ17}. Some estimates need to be modified accordingly. However, this requires long and technical computations and would probably increase the length of our paper.
For more results on the strong convergence result of spatial approximations  for  the stochastic NLS equation, we refer the reader to \cite{CHL16b,CHLZ17}. However, the study of strong and weak convergence rate of numerical schemes both in time and space for a higher dimensional stochastic NLS equation requires the a priori estimates in a higher Sobolev norm and further investigation.

Next, we always assume that 
$ \sup\limits_{x\in\mathbb R}(\frac12F_Q(x)- \alpha(x))\le -a$.
Since \eqref{nls-d} possesses the charge conservation law and  \eqref{nls-s} is weakly damped, it is not difficult to obtain the 
following  results about the charge of this splitting process. 

\begin{lm}\label{dis-char}
	Let  $\alpha \in \HH^1$,  $\sup\limits_{x\in\mathbb R}(\frac12 F_Q(x)- \alpha(x))\le -a$,  $Q^{\frac 12}\in\LL_2^1$, and $u_0\in \mathbb H$.
	The splitting process $u_\tau=\{u_\tau(t):\ t\in [0,T)\}$ is uniquely solvable and $\FFF_t$-measurable.
	Moreover, for any $t\in [0,T]$ there holds a.s. that 
	\begin{align*}
	\|u_\tau(t)\|^2\le e^{-2at}\|u_0\|^2.
	\end{align*}
	For $t\in T_m$,
	we have 
	\begin{align*}
	\|u_{\tau,m}^S(t)\|^2\le e^{-2at}\|u_0\|^2,\quad 
     \|u_{\tau,m}^D(t)\|^2\le e^{-2at_m}\|u_0\|^2.
	\end{align*}
	
\end{lm}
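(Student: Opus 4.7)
My plan is to exploit the operator-splitting structure by handling the two subflows separately and combining them through an induction on $m$. On each interval $T_m$ the deterministic flow $\Phi^D_{m,\cdot}$ is that of the one-dimensional cubic NLS, which is globally well posed in $\HH$ and conserves the charge: taking the real part of $\<u,du^D\>$ and observing $\<u,\bi\Delta u\>=\<u,\bi|u|^2u\>=0$ gives $\|\Phi^D_{m,s}v\|=\|v\|$ for all $v\in\HH$ and $s\ge 0$. In particular $\|u^D_{\tau,m}(t)\|$ depends only on its value at the left endpoint of $T_m$.

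For the stochastic subflow $\Phi^S_{m,\cdot}$ I would apply It\^o's formula to $\|u^S_{\tau,m}\|^2$. Because the noise enters multiplicatively through $\bi$, the martingale contribution $2\<u^S_{\tau,m},\bi u^S_{\tau,m}\,dW\>$ vanishes, as $\Re(\overline{u}\cdot\bi u)=0$ pointwise, while the It\^o correction yields $\int_{\mathbb R}F_Q|u^S_{\tau,m}|^2\,dx\,dt$. Adding the damping contribution $-2\int_{\mathbb R}\alpha|u^S_{\tau,m}|^2\,dx\,dt$ produces
\begin{align*}
d\|u^S_{\tau,m}(t)\|^2=\int_{\mathbb R}(F_Q-2\alpha)|u^S_{\tau,m}|^2\,dx\,dt\le -2a\,\|u^S_{\tau,m}(t)\|^2\,dt,
\end{align*}
so that Gr\"onwall's inequality gives $\|u^S_{\tau,m}(t)\|^2\le e^{-2a(t-t_m)}\|u^S_{\tau,m}(t_m)\|^2$. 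Equivalently, one can solve \eqref{nls-s} explicitly as $u^S(t,x)=u^S(t_m,x)\exp\bigl((-\alpha(x)+\tfrac12 F_Q(x))(t-t_m)+\bi(W(t,x)-W(t_m,x))\bigr)$, which delivers the same bound pointwise in $x$ and also makes $\FFF_t$-measurability transparent (it is inherited from the Brownian increment $W(t)-W(t_m)$ acting on an $\FFF_{t_m}$-measurable initial datum). Unique solvability of both subflows is standard.

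I then induct on $m$. The base case $m=0$ is trivial. Assuming $\|u_\tau(t_m^-)\|^2\le e^{-2at_m}\|u_0\|^2$, I read off from \eqref{ut} that $u^D_{\tau,m}(t)=\Phi^D_{m,t-t_m}u_\tau(t_m^-)$ and $u^S_{\tau,m}(t)=\Phi^S_{m,t-t_m}\bigl[\Phi^D_{m,\tau}u_\tau(t_m^-)\bigr]$. Charge conservation gives $\|u^D_{\tau,m}(t)\|^2=\|u_\tau(t_m^-)\|^2\le e^{-2at_m}\|u_0\|^2$, which is the second claimed bound, and chaining with the stochastic decay yields
\begin{align*}
\|u^S_{\tau,m}(t)\|^2\le e^{-2a(t-t_m)}\|\Phi^D_{m,\tau}u_\tau(t_m^-)\|^2=e^{-2a(t-t_m)}\|u_\tau(t_m^-)\|^2\le e^{-2at}\|u_0\|^2.
\end{align*}
Letting $t\to t_{m+1}^-$ closes the induction and produces the global bound $\|u_\tau(t)\|^2\le e^{-2at}\|u_0\|^2$.

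The principal subtlety is technical rather than structural: the It\^o computation on $\|u^S_{\tau,m}\|^2$ is formal at the level of the infinite-dimensional equation and must be justified by truncation or spectral approximation, in line with the authors' general convention announced in the introduction. Once that is in place, the two algebraic facts, that the $\bi$-multiplicative noise annihilates the martingale term and that $F_Q-2\alpha\le -2a$ fixes the sign of the drift, combine to close the estimate immediately.
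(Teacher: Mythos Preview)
Your proposal is correct and follows precisely the approach the paper indicates: the paper does not give a detailed proof of this lemma but simply notes that \eqref{nls-d} conserves charge and \eqref{nls-s} is weakly damped, which is exactly the pair of facts you combine inductively. Your It\^o computation for the stochastic subflow matches the one in Lemma~\ref{char}/Corollary~\ref{cor-char}, and your remark about the explicit solution of \eqref{nls-s} is consistent with the form used in \eqref{splcn}.
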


\begin{prop}
	\label{dis-hs}
	Assume that $\alpha \in \HH^{\bs} $,  
	$\sup\limits_{x\in\mathbb R}(\frac12 F_Q(x)- \alpha(x))\le -a$,
	 $Q^{\frac 12}\in\LL_2^\bs$, and $u_0\in \mathbb H^\bs$, $\bs\ge 1$. Then for any $p\ge 2$, we have 
	\begin{align}\label{dis-hs-ut}
	\sup_{t\in[0,\infty)}\E\Big[\big\|u_\tau(t)\big\|_{\mathbb H^\bs }^p\Big]
	\le C(1+\|u_0\|^p_{\mathbb H^{\bs}}+\|u_0\|_{\mathbb H^{ \bs-1}}^{5p}).
	\end{align}
\end{prop}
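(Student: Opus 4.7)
The plan is to mimic the $\bs$-induction used for the exact solution in Proposition \ref{hs}, but to apply it subinterval by subinterval in a way that exploits the specific splitting structure: the deterministic sub-flow $\Phi^D$ conserves the relevant Lyapunov functionals, while the stochastic sub-flow $\Phi^S$ provides the exponential-in-time damping. The base case $\bs=1$ is already under control, since on any $T_m$ the D-flow preserves both $\|u\|$ and the energy $H(u)=\frac12\|\nabla u\|^2-\frac\lambda4\|u\|_{L^4}^4$, and an It\^o calculation for $H(u^S_{\tau,m}(t))$ on the S-flow, essentially identical to that in Proposition \ref{h1}, yields
\[
\E[H(u^S_{\tau,m}(t))]\le e^{-c(t-t_m)}\E[H(u^S_{\tau,m-1}(t_m))]+C(\alpha,Q)\,e^{-2at_m}(\|u_0\|^2+\|u_0\|^6),
\]
for some $c>0$, where we also used Lemma \ref{dis-char} to replace $\|u_\tau\|$ by $e^{-at}\|u_0\|$. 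Iterating this recursion in $m$, absorbing the geometrically decaying source term, and then invoking Gagliardo--Nirenberg to pass from $H$ to $\|u_\tau\|_{\HH^1}$ gives the $\bs=1$ case for $p=2$; higher moments follow by applying It\^o to powers of $H$ and using the Burkholder--Davis--Gundy inequality, just as in the proof of Proposition \ref{h1}.

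For the inductive step, assume the claim holds for $\bs-1$ and work with the auxiliary functional $f(u)=\|\nabla^\bs u\|^2-\lambda\langle(-\Delta)^{\bs-1}u,|u|^2u\rangle$ from \cite{CHL16b}. On each $T_m$ the deterministic flow conserves $f$, so that $f(u^D_{\tau,m}(t))=f(u_\tau(t_m))$. On the stochastic sub-interval, It\^o's formula applied to $f(u^S_{\tau,m}(t))$ produces exactly the terms that appear in the proof of Proposition \ref{hs}, and the hypothesis $\sup_x(\tfrac12 F_Q-\alpha)\le -a$ yields, after a Young-inequality splitting,
\[
\E[f(u^S_{\tau,m}(t))]\le f(u_\tau(t_m))-(a-\epsilon)\int_{t_m}^{t}\E\|\nabla^\bs u^S_{\tau,m}\|^2\,ds+C(\epsilon,\alpha,Q)\int_{t_m}^{t}\E\bigl[\|u^S_{\tau,m}\|_{\HH^{\bs-1}}^4+\|u^S_{\tau,m}\|_{\HH^{\bs-1}}^{10}\bigr]\,ds.
\]
Using $f(u)\le\|\nabla^\bs u\|^2+C\|u\|_{\HH^{\bs-1}}^4$ (so that $\|\nabla^\bs u\|^2\ge \frac{2}{1+\eta}f(u)-C\|u\|_{\HH^{\bs-1}}^4$), one converts this into a Gr\"onwall inequality of the form $\frac{d}{dt}\E[f(u^S_{\tau,m})]\le-c\,\E[f(u^S_{\tau,m})]+R_m(t)$, where the remainder $R_m(t)$ is controlled, via the inductive hypothesis, by $C(1+\|u_0\|_{\HH^{\bs-1}}^p+\|u_0\|_{\HH^{\bs-2}}^{5p})$ uniformly in $m$ and $t$.

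Concatenating the D-step (which leaves $f$ unchanged) with the S-step (which contracts $f$ by a factor $e^{-c\tau}$ modulo the bounded remainder $R_m$) yields the discrete recursion
\[
\E[f(u_\tau(t_{m+1}))]\le e^{-c\tau}\E[f(u_\tau(t_m))]+ C(1+\|u_0\|_{\HH^{\bs-1}}^{4}+\|u_0\|_{\HH^{\bs-1}}^{10})\tau,
\]
whose stationary bound is independent of $M$ and hence of $t$. Re-extracting $\|u_\tau\|_{\HH^\bs}$ from $f(u_\tau)$ via the triangle inequality $\|\nabla^\bs u\|^2\le f(u)+C\|u\|_{\HH^{\bs-1}}^4$, then coupling with the inductive estimate for the $\HH^{\bs-1}$-norm, delivers \eqref{dis-hs-ut} for $p=2$; moment bounds for arbitrary $p\ge 2$ are obtained by applying It\^o to $(1+f(u))^{p/2}$ on the stochastic sub-intervals and using the Burkholder--Davis--Gundy inequality. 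I expect the main technical obstacle to be controlling the cross-terms that arise when the cubic nonlinearity in $f$ interacts with the noise in the S-step: one must absorb the resulting fourth-order quantities into the $-(a-\epsilon)\|\nabla^\bs u\|^2$ dissipation using $\HH^{\bs-1}$-control, and the constant $C$ has to be kept uniform in $m$, which forces the argument to lean on the exponentially decaying charge from Lemma \ref{dis-char} at every stage.
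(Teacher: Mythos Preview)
Your overall strategy matches the paper's---split each interval into a $D$-step and an $S$-step, use a Gr\"onwall/contraction argument on the $S$-step, and concatenate via a discrete recursion---but there is one genuine error in the inductive step. You claim that the deterministic flow $\Phi^D$ conserves the functional $f(u)=\|\nabla^\bs u\|^2-\lambda\langle(-\Delta)^{\bs-1}u,|u|^2u\rangle$ for $\bs\ge2$. It does not. The functional $f$ is engineered so that the leading $\HH^\bs$-term $2\lambda\langle(-\Delta)^\bs u,\bi|u|^2u\rangle$ in $\frac{d}{dt}\|\nabla^\bs u\|^2$ is cancelled by the $\bi\Delta u$ contribution to $\frac{d}{dt}\langle(-\Delta)^{\bs-1}u,|u|^2u\rangle$, but the cubic nonlinearity $\bi\lambda|u|^2u$ in $u_t$ and the lower-order commutators leave residual terms involving $|u|^4u$, $|u|^2\Delta u$, $|\nabla u|^2 u$ and $(\nabla u)^2\bar u$. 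The paper computes these and shows
\[
f(u^D_{\tau,m}(t))-f(u^D_{\tau,m}(t_m))\le\epsilon\int_{t_m}^{t}f(u^D_{\tau,m})\,ds+C(\epsilon)\int_{t_m}^{t}\bigl(\|u^D_{\tau,m}\|_{\HH^{\bs-1}}^4+\|u^D_{\tau,m}\|_{\HH^{\bs-1}}^{10}\bigr)\,ds,
\]
so the $D$-step contributes a growth factor $e^{\epsilon\tau}$ together with an extra source term that must itself be bounded by the induction hypothesis applied to $u^D_{\tau,m}$, not just to $u^S_{\tau,m}$. Combining this with the contraction $e^{-(a-\epsilon)\tau}$ from the $S$-step gives a net factor $e^{-(a-2\epsilon)\tau}<1$ in the discrete recursion, which still yields a time-uniform bound. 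The fix is short, but as written your argument skips a step that is not true.
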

\begin{proof}
	For simplicity, we give the proof for $p=2$. 
	The case $p>2$ is  made similar to the proof in \cite[Theorem 2.1]{CHL16b} by applying the It\^o formula to appropriate power of the auxiliary functionals $H$ and $f$, and applying
Burkholder--Davis--Gundy inequality.
	Notice that the energy evolution of splitting process \eqref{ut} is same as Eq. \eqref{dnls} in each interval $T_m$. The It\^o formula, combined with the energy conservation law of Eq. \eqref{nls-d}, yields that 
	\begin{align*}
	&\mathbb E [H(u_{\tau,m}^S(t))]-\mathbb E [H(u_{\tau,m}^D(t_{m}))]\\
	&=\int_{t_m}^t \E \Big[\big\<\nabla u_{\tau,m}^S,\nabla u_{\tau,m}^S (\frac {F_Q}2-\alpha)\big\>\Big]ds
	+\int_{t_m}^t\sum_k\E \Big[\big \<\nabla u_{\tau,m}^S, u_{\tau,m}^S  (Q^{\frac 12}e_k \nabla Q^{\frac 12}e_k- \nabla\alpha)\big\>\Big]ds\\
	&\quad+\int_{t_m}^t \frac 12\sum_k\E \Big[\big\<u_{\tau,m}^S, u_{\tau,m}^S |\nabla Q^{\frac 12}e_k|^2 \big\>\Big] ds
	+\int_{t_m}^t \lambda\E \Big[\big\<|u_{\tau,m}^S|^2u_{\tau,m}^S, u_{\tau,m}^S(\alpha-\frac {F_Q}2) \big\>\Big]ds.
	\end{align*}
	Similar to Proposition \ref{h1}, we get 
	\begin{align*}	
	\E [H(u_{\tau,m}^S(t))]
	&\le  H(u_{\tau,m}^S(t_m))-\frac {2(a-\epsilon)}{1+\eta}\int_{t_m}^t\E\big[H(u_{\tau,m}^S)\big]ds
	+C(\epsilon,\eta)\int_{t_m}^t\bigg(\|u_{\tau,m}^S\|^2
	\big(\|Q^{\frac 12}\|_{\LL^1_2}^4\\
	&\quad+
\|Q^{\frac 12}\|_{\LL^1_2}^8+\|\nabla \alpha\|^4\big)+
	\|u_{\tau,m}^S\|^6\|\alpha-\frac {F_Q}2\|^2_{L^{\infty}}+\|u_{\tau,m}^S\|^6\bigg)ds.
	\end{align*} 
The Gronwall inequality implies 
\begin{align*}
\E[H(u_{\tau,m}^S(t))]&\le  e^{-\frac {2(a-\epsilon)}{1+\eta}(t-t_m)}H(u_{\tau,m}^S(t_m))+e^{-\frac{2(a-\epsilon)}{1+\eta}t }C(\epsilon,\eta, \alpha,Q,\|u_0\|)(t-t_m).
\end{align*}	
Then by repeating the above procedures in each interval and combining them with  discrete Gronwall inequality, we obtain
\begin{align*}
\E [H(u_\tau(t))]\le e^{-\frac {2(a-\epsilon)}{1+\eta}t}H(u_0)+
e^{-\frac{2(a-\epsilon)}{1+\eta}t } (1+t)C(\epsilon,\eta,\alpha,Q)
\le H(u_0)+C(\epsilon,\eta,\alpha,Q).
\end{align*}	
% where we use the fact that 
%$\frac \tau{1-e^{-c\tau}}\le \frac {1+c\tau}c$.
Then 
similar arguments lead to the uniform boundedness for $p\ge 2$. \\
Next, we turn to estimate $\E [\|u\|_{\HH^\bs }^2], \bs \ge 2$.
Similar to Proposition \ref{hs}, we have
\begin{align*}
&f(u_{\tau,m}^D(t))-f(u_{\tau,m}^D(t_m)) \\
&=-\int_{t_m}^{t} \left\<(-\Delta)^{\bs-1} u_{\tau,m}^D,\bi |u_{\tau,m}^D|^4u_{\tau,m}^D\right\>dr
-\lambda \int_0^t \Big\<(-\Delta)^{\bs-1} u_{\tau,m}^D, 
3\bi |u_{\tau,m}^D|^2\Delta u_{\tau,m}^D \Big\> dr\\
&\quad-\lambda \int_{t_m}^{t}  \left\<(-\Delta)^{\bs-1}  u_{\tau,m}^D,
4\bi |\nabla  u_{\tau,m}^D|^2 u_\tau^D
+2\bi (\nabla  u_{\tau,m}^D)^2  \overline{u_{\tau,m}^D}\right\>dr \\
%&\quad\le  f(u_\tau^D(t_m))+\epsilon \int_{t_m}^t\E \big[\|\Delta u\|^2\big]ds
%+C(\epsilon)\int_{t_m}^t\E\big[\|u\|_{\HH^{1}}^4
%+\|u\|_{\HH^{1}}^{10}]ds\\
&\le \epsilon \int_{t_m}^t f(u_{\tau,m}^D)ds
+C(\epsilon,\alpha,Q)\int_{t_m}^t\left(\|u_{\tau,m}^D\|_{\HH^{\bs-1}}^4
+\|u_{\tau,m}^D\|_{\HH^{\bs- 1}}^{10}\right)ds.
\end{align*}
By the  Gronwall inequality, we obtain 
\begin{align*}
f(u_{\tau,m}^D(t_{m+1}))\le e^{\epsilon \tau}f(u_{\tau,m}^D(t_m)) 
+C(\epsilon,\alpha,Q)\int_{t_m}^{t_{m+1}}\left(\|u_{\tau,m}^D\|_{\HH^{\bs-1}}^4
+\|u_{\tau,m}^D\|_{\HH^{\bs-1}}^{10}\right)ds.
\end{align*}
On the other hand, the It\^o formula and the  Young and  Gagliardo--Nirenberg inequalities yield that 
\begin{align*}
\E[f(u^S_{\tau,m}(t))]&\le \E [f(u^D_{\tau,m}(t_{m+1}))]-(a-\epsilon)\int_{t_m}^t\E \big[\|\nabla^\bs u_{\tau,m}^S\|^2\big]ds\\
&\quad+C(\epsilon,\alpha,Q)\int_{t_m}^t\E\big[\|u_{\tau,m}^S\|_{\HH^{\bs-1}}^4
+\|u_{\tau,m}^S\|_{\HH^{\bs-1}}^{10}\big]ds. 
\end{align*}
Again by the Gronwall inequality, 
we get
\begin{align*}
\E[f(u^S_{\tau,m}(t))]
&\le
e^{-(a-\epsilon)(t-t_m)+\epsilon \tau} \E [f(u^D_{\tau,m}(t_{m}))]\\
&\quad+C(\epsilon,\alpha,Q)\int_{t_m}^t e^{-(a-\epsilon)(t-s)} \E\big[\|u_{\tau,m}^S\|_{\HH^{\bs-1}}^4
+\|u_{\tau,m}^S\|_{\HH^{\bs-1}}^{10}\big]ds\\
&\quad+C(\epsilon,\alpha,Q)e^{-(a-\epsilon)(t-t_m)} \int_{t_m}^{t_{m+1}}\E\left[\|u_{\tau,m}^D\|_{\HH^{\bs-1}}^4
+\|u_{\tau,m}^D\|_{\HH^{\bs-1}}^{10}\right]ds.
\end{align*}
Finally, the discrete Gronwall inequality, together with the induction hypothesis,  leads to 
\begin{align*}
\E[(f(u_\tau(t)))]&\le  Ce^{-(a-2\epsilon)t}f(u_0)+\frac {1-e^{-(a-2\epsilon)T}}{1-e^{-(a-2\epsilon)\tau}}
C(\epsilon, \alpha,Q,u_0)\tau\\
&\le f(u_0)+C(\epsilon, \alpha,Q,u_0),
\end{align*}
where we use the fact that 
$\frac \tau{1-e^{-c\tau}}\le \frac {1+c\tau}c$.
The relationship $ \|\nabla^\bs u\|^2-C\|u\|_{\HH^{\bs-1}}^4\le f(u)\le \|\nabla^\bs u\|^2+C\|u\|_{\HH^{\bs-1}}^4$ and induction arguments finish the proof.
\end{proof}

We also need a priori  estimation on numerical solution of the splitting Crank-Nicolson scheme \eqref{splcn}. The detail proof for the following lemma  is omitted since it is similar to the proof of Proposition \ref{dis-hs}.

\begin{lm}\label{dis-char-cn}
	Let  $\alpha \in \HH^1 $, $\sup\limits_{x\in\mathbb R}(\frac12 F_Q(x)$ $- \alpha(x))\le -a$,
	$Q^{\frac 12}\in\LL_2^1,$ and $u_0\in \mathbb H^1$.
	The splitting process $u_m, m\in \mathbb Z_M$ is uniquely solvable and $\FFF_{t_m}$-measurable.
	Moreover, it holds a.s. that 
	\begin{align}\label{cha-spl-cn}
	\|u_{m}\|^2\le e^{-2at_m}\|u_0\|^2.
	\end{align}
	For $t\in T_m$, the energy of $u_m$ is uniformly bounded. More precisely, for any $p\ge1$,
	there exists $b>0$ such that
	\begin{align*}
	\sup_{m\in \Z_M}\E[H^p(u_m)]\le Ce^{-bt_m}(1+H^p(u_0)).
	\end{align*}

\end{lm}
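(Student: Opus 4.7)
The plan is to prove the three assertions — unique solvability/measurability, exponential charge decay, and the exponential-rate energy bound — by decoupling each step $t_m \to t_{m+1}$ into the implicit Crank--Nicolson solve for $u_{m+1}^D$ and the explicit exponential stochastic step for $u_{m+1}$, and then mirroring the semidiscrete argument of Proposition \ref{dis-hs} at the fully discrete level.

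\textbf{Solvability, measurability and charge decay.} First I would show that the nonlinear equation for $u_{m+1}^D$ in \eqref{splcn} is uniquely solvable in $\HH^1$ and depends measurably on $u_m$. Pairing the scheme with $u_{m+\frac12}^D$ and taking real parts annihilates all three right-hand-side terms (each coefficient is purely imaginary), yielding the a priori identity $\|u_{m+1}^D\|=\|u_m\|$. A Brouwer/Galerkin argument combined with this bound gives existence, and uniqueness follows from a contraction estimate (or equivalently from the monotonicity of the cubic on the level set of charge). The map $u_m\mapsto u_{m+1}^D$ is then Borel-measurable, and $u_{m+1}$ is an explicit measurable function of $u_{m+1}^D$ and the increment $W(t_{m+1})-W(t_m)$, so $\FFF_{t_m}$-measurability propagates by induction. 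For \eqref{cha-spl-cn} I use the exact solution formula for \eqref{nls-s}: pointwise, $|u_{m+1}|^2=|u_{m+1}^D|^2\exp((F_Q-2\alpha)\tau)$, so the hypothesis $\tfrac12 F_Q-\alpha\le -a$ gives $\|u_{m+1}\|^2\le e^{-2a\tau}\|u_m\|^2$, and iteration completes the claim.

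\textbf{Energy bound.} The crucial new ingredient is discrete energy conservation of the Crank--Nicolson step: pairing $(u_{m+1}^D-u_m)/\tau$ with $-\Delta u_{m+\frac12}^D-\lambda\tfrac{|u_m|^2+|u_{m+1}^D|^2}{2}u_{m+\frac12}^D$, taking real parts, and using $\Re\<\bi f,f\>=0$ yields $H(u_{m+1}^D)=H(u_m)$. For the stochastic leg I introduce the continuous interpolant $\widehat u_{\tau,m}^S(t):=\Phi^S_{m,t-t_m}u_{m+1}^D$, $t\in T_m$, whose right limit at $t_{m+1}$ equals $u_{m+1}$, and apply the It\^o formula to $H(\widehat u_{\tau,m}^S(t))$. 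The resulting identity is structurally identical to the one exploited for $u_{\tau,m}^S$ in Proposition \ref{dis-hs}; using \eqref{cha-spl-cn} to dominate the forcing terms $\|\widehat u_{\tau,m}^S\|^2(\|Q^{\frac12}\|_{\LL_2^1}^4+\|\alpha\|_{\HH^1}^4)+\|\widehat u_{\tau,m}^S\|^6\|\alpha-\tfrac{F_Q}{2}\|_{L^\infty}^2$ by $e^{-2at_m}$-decaying quantities, Gronwall's inequality delivers
\begin{align*}
\E[H(u_{m+1})]\le e^{-\frac{2(a-\epsilon)}{1+\eta}\tau}\,\E[H(u_m)] + C(\epsilon,\eta,\alpha,Q,u_0)\,e^{-2at_m}\,\tau
\end{align*}
for any small $\epsilon,\eta>0$. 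Setting $\rho:=2(a-\epsilon)/(1+\eta)$ and iterating reduces matters to the geometric sum $\sum_{k=0}^{m-1}e^{(\rho-2a)k\tau}\tau$, uniformly bounded in $m$ and $\tau$ by $1/(2a-\rho)$ since $\rho<2a$. This yields $\sup_{m}\E[H(u_m)]\le Ce^{-bt_m}(1+H(u_0))$ with $b=\rho$. The extension to $p>1$ follows by applying It\^o to $H^p(\widehat u_{\tau,m}^S)$ and controlling the martingale term via the Burkholder--Davis--Gundy inequality, exactly as in Proposition \ref{dis-hs}.

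\textbf{Main obstacle.} The most delicate point is not any individual per-step estimate — the Crank--Nicolson energy conservation and the It\^o estimate on the stochastic leg are routine once assembled — but ensuring that when the per-step recursion is iterated over arbitrarily many time steps, the prefactor in front of $e^{-bt_m}$ remains genuinely independent of $m$ and of $\tau$. This requires the strict inequality $\rho<2a$ (so that the geometric series of charge-driven forcings is summable uniformly in the horizon), which in turn rests on the sharp sign condition $\tfrac12 F_Q-\alpha\le -a<0$ and on $Q^{\frac12}\in\LL_2^1$ keeping the stochastic forcing $\|Q^{\frac12}\|_{\LL_2^1}^4$ finite; the same mechanism is what will allow the $p$th-moment bound to be telescoped into a time-independent constant.
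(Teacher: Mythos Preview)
Your proposal is correct and follows exactly the route the paper indicates: the paper omits the proof of this lemma, stating only that it is ``similar to the proof of Proposition~\ref{dis-hs},'' and your argument---discrete charge and energy conservation of the Crank--Nicolson step, It\^o formula on the continuous extension $\widehat u_{\tau,m}^S$ of the stochastic leg, then the same Gronwall/iteration with the strict gap $\rho<2a$---is precisely that adaptation. The identification of the uniform-in-$m,\tau$ summability of the geometric forcing as the key issue is also on point.
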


\begin{prop}\label{h2-um}
	     Assume  that $\alpha \in \HH^{2}$, $\sup\limits_{x\in\mathbb R}(\frac12 F_Q(x)$ $- \alpha(x))\le -a$, $Q^{\frac 12}\in\LL_2^2$, and $u_0\in \mathbb H^2$.
		Then for any $p\ge 2$, there exists a constant $C=C( \alpha,Q, u_0,p)$ such that  
		\begin{align}\label{h2-um0}
		\sup_{m\in \Z_{M}}\E\left[\|u_m\|_{\HH^2}^p \right]\le C.
		\end{align}
\end{prop}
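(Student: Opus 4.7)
The plan is to follow the structure of Proposition \ref{dis-hs}, working with the auxiliary Lyapunov functional $f(u):=\|\nabla^2 u\|^2-\lambda\<-\Delta u,|u|^2u\>$ but now replacing the exact deterministic NLS flow by the Crank--Nicolson map $\widehat{\Phi^D_{j,\tau}}$. Higher moments $p>2$ follow from applying the It\^o formula to $f^{p/2}$ combined with the Burkholder--Davis--Gundy inequality and the lower-moment estimates, in the spirit of \cite[Theorem 2.1]{CHL16b}, so I focus on the case $p=2$.

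The deterministic Crank--Nicolson substep is the main new ingredient compared with Proposition \ref{dis-hs}. The goal is to establish a discrete one-step inequality of the form
\begin{align*}
f(u_{m+1}^D)\le(1+C\tau)\,f(u_m)+C\tau\,\bigl(1+\|u_m\|_{\HH^1}^{10}\bigr).
\end{align*}
Testing \eqref{splcn} in the $\HH^2$ inner product against $\Delta^2 u_{m+\frac12}^D$ yields
\begin{align*}
\|\Delta u_{m+1}^D\|^2-\|\Delta u_m\|^2=2\lambda\tau\,\Big\<\Delta u_{m+\frac12}^D,\,i\Delta\Big(\tfrac{|u_m|^2+|u_{m+1}^D|^2}{2}\,u_{m+\frac12}^D\Big)\Big\>,
\end{align*}
the pure Laplacian contribution vanishing because $\<v,iv\>=0$. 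In parallel, I would compute the discrete variation of $-\lambda\<-\Delta u,|u|^2u\>$ using the identity $|u_{m+1}^D|^4-|u_m|^4=2\Re\bigl[(|u_{m+1}^D|^2+|u_m|^2)\overline{u_{m+\frac12}^D}(u_{m+1}^D-u_m)\bigr]$ and substituting \eqref{splcn} for $u_{m+1}^D-u_m$. The leading nonlinear contributions cancel, mirroring the exact conservation of $f$ along the continuous NLS flow, and the remaining $O(\tau)$ terms are bounded by integration by parts, H\"older and Gagliardo--Nirenberg inequalities, with $\HH^1$-norms controlled uniformly through Lemma \ref{dis-char-cn}.

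For the stochastic substep, $u_{m+1}=\Phi^S_{m,\tau}(u_{m+1}^D)$ solves the linear damped SPDE \eqref{nls-s} on $[t_m,t_{m+1}]$, and the It\^o-type calculation for $f(u_{\tau,m}^S(t))$ already carried out in Proposition \ref{dis-hs} applies verbatim. Using the damping assumption $\sup_x(\frac12F_Q-\alpha)\le-a$ and Lemma \ref{dis-char-cn} to control the lower-order terms, one obtains
\begin{align*}
\E[f(u_{m+1})]\le e^{-(a-\epsilon)\tau}\E[f(u_{m+1}^D)]+C\tau\,\E\bigl[1+\|u_{m+1}^D\|_{\HH^1}^{10}\bigr].
\end{align*}
Composing with the CN estimate and choosing $\epsilon>0$ small enough so that $e^{-(a-\epsilon)\tau}(1+C\tau)\le e^{-(a-2\epsilon)\tau}$ for $\tau$ small gives a contractive recursion $\E[f(u_{m+1})]\le e^{-(a-2\epsilon)\tau}\E[f(u_m)]+C\tau$. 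The discrete Gronwall inequality combined with the bound $\tau/(1-e^{-c\tau})\le(1+c\tau)/c$ used at the end of Proposition \ref{dis-hs} then yields $\sup_m\E[f(u_m)]\le f(u_0)+C$. The embedding $\|u\|_{\HH^2}^2\le f(u)+C\|u\|_{\HH^1}^6$ together with Lemma \ref{dis-char-cn} produces \eqref{h2-um0} for $p=2$.

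The main obstacle is the Crank--Nicolson step. Unlike the exact deterministic flow, the implicit CN map has no clean conservation law for $f$, and the technical heart of the proof consists in showing that the discrete variation of $-\lambda\<-\Delta u,|u|^2u\>$ absorbs precisely the dangerous nonlinear term $i\Delta(\tfrac{|u_m|^2+|u_{m+1}^D|^2}{2}u_{m+\frac12}^D)$ appearing in the $\|\Delta u\|^2$-increment, leaving only remainders controllable by the uniform $\HH^1$-bound. The averaged form $\tfrac{|u_m|^2+|u_{m+1}^D|^2}{2}$ in place of $|u_{m+\frac12}^D|^2$ makes the bookkeeping of these cancellations somewhat delicate and accounts for most of the technical effort.
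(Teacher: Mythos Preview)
Your proposal follows the same two-step strategy as the paper: a discrete one-step growth bound for the Lyapunov functional $f$ across the Crank--Nicolson map, then the It\^o computation of Proposition~\ref{dis-hs} for the stochastic substep, then discrete Gronwall. The paper outsources the CN analysis to \cite[Lemma~3.3]{CHLZ17} while you sketch it directly via testing against $\Delta^2 u_{m+\frac12}^D$; this is the same content.

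There is, however, a genuine gap in how you combine the two steps. You write the CN estimate as $f(u_{m+1}^D)\le(1+C\tau)f(u_m)+C\tau(\ldots)$ with a \emph{fixed} constant $C$, and then propose to ``choose $\epsilon>0$ small enough so that $e^{-(a-\epsilon)\tau}(1+C\tau)\le e^{-(a-2\epsilon)\tau}$.'' But this inequality forces $C\lesssim\epsilon$, hence $a>C$, which is not assumed. The paper avoids this by keeping the Young-inequality parameter free in the CN step: it first obtains
\[
f(u_{m+1}^D)\le f(u_m)+\tfrac{\epsilon\tau}{2}\bigl(\|\Delta u_m\|^2+\|\Delta u_{m+1}^D\|^2\bigr)+C(\epsilon)\tau\bigl(1+\|\nabla u_m\|^{12}+\|\nabla u_{m+1}^D\|^{12}\bigr),
\]
and after absorbing the $\|\Delta u_{m+1}^D\|^2$ term arrives at $f(u_{m+1}^D)\le(1+2\epsilon\tau)f(u_m)+C(\epsilon)\tau(\ldots)$. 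Now $\epsilon$ is free, so taking $\epsilon<a/3$ makes $e^{-(a-\epsilon)\tau}(1+2\epsilon\tau)\le e^{-(a-3\epsilon)\tau}<1$ without any size restriction on $a$. Your outlined computation would in fact produce this $\epsilon$-flexibility naturally (the remainder terms you describe are exactly of the form $\tau\langle\Delta u,\text{lower order}\rangle$ to which Young applies), so the fix is simply to retain the parameter rather than collapse it to a generic $C$.

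A minor point: $f$ is not exactly conserved along the continuous NLS flow (cf.\ the computation in Proposition~\ref{dis-hs}, where nontrivial $\HH^{\bs-1}$ terms remain); what is true, and what you are really using, is that the highest-order contributions cancel in $\frac{d}{dt}f(u)$, leaving only terms controllable by the $\HH^1$-norm.
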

\begin{proof}
	Arguments similar to \cite[Lemma 3.3]{CHLZ17}, combined with the Young inequality, yield that 
	\begin{align*}
	f( u^{D}_{m+1})
&	\le f(u_m) +\frac {\epsilon \tau}2 (\|\Delta u_m\|^2 +\|\Delta u^{D}_{m+1}\|^2)\\
&\quad	+C(\epsilon) \tau \left(1+\|\nabla u^{D}_{m+1}\|^{12}+\|\nabla u_m\|^{12}\right).
	\end{align*}
Then we have 
	\begin{align*}
	f( u^{D}_{m+1})&\le  \frac{1+\frac{\epsilon \tau} 2}{1-\frac{\epsilon \tau} 2}f(  u_m)
	+\frac{C(\epsilon)\tau}{1-\frac{\epsilon \tau} 2}\left(1+\|\nabla u^{D}_{m+1}\|^{12}+\|\nabla u_m\|^{12}\right).
	\end{align*}
Let $\epsilon\tau\le1$. We get 
\begin{align*}
f( u^{D}_{m+1})
&\le (1+2\epsilon\tau) f(  u_m)
+C(\epsilon) \tau \left(1+\|\nabla u^{D}_{m+1}\|^{12}+\|\nabla u_m\|^{12}\right).
\end{align*}	
Notice that $u_m$ can be extended to a continuous process 
$\widehat  u^S_{\tau,m}(t)$ with  $\widehat  u^S_{\tau, m}(t_m)=u^{D}_{m+1}$ in $ T_m$.
The  arguments in Proposition \ref{dis-hs}, together with Lemma \ref{dis-char-cn} show that for some $b_1>0$,
\begin{align*}
\E[f(\widehat  u^S_{\tau,m}(t))]
&\le
e^{-(a-\epsilon)(t-t_m)} \E [f(\widehat  u^D_{m+1})]
+e^{-b_1t}C(\epsilon, \alpha,Q,u_0)\tau\\
&\le e^{-(a-\epsilon)(t-t_m)}(1+2\epsilon\tau) \E[f(u_m)]+e^{-\min(b_1,a-3\epsilon)t}C(\epsilon,\alpha, Q,u_0)\tau.
\end{align*}
Using the discrete Gronwall inequality, we obtain 
\begin{align*}
\E[f(\widehat  u_\tau(t))]
\le Ce^{-(a-3\epsilon)t}f(u_0)+e^{-\min(b_1,a-3\epsilon)t}
(t+1)C(\epsilon,\alpha, Q,u_0)
\le
f(u_0)+C(\epsilon, \alpha,Q,u_0),
\end{align*}
which yields the uniform boundedness of $f(u_{m}),m\in \mathbb Z_M$,
 and thus $\|u_{m}\|_{\HH^{2}},m\in \mathbb Z_M$.
 The proof of the case $p>2$ is similar.
 
\end{proof}

To analyze the strong and weak order of the proposed scheme, we need to 
show some exponential integrability of $u_m$ and $u_\tau$ based on  \cite[Lemma 3.1] {CHL16b}. These 
 exponential integrability properties can be used to deduce the continuous dependence on initial data of $u_m$ and $u_\tau$
 as in
\cite{CHL16b, HJ14}.  

\begin{prop}\label{exp-ut}
	Let  $\alpha \in \HH^1 $, $\sup\limits_{x\in\mathbb R}(\frac12 F_Q(x)- \alpha(x))\le -a$,
	$Q^\frac12\in \LL_2^1$ and $u_0\in \HH^1$.
	There exist $\beta$ and $C=C(\alpha,Q,u_0)$ such that 
	\begin{align}\label{exp-mom-ut}
	\E\left[\exp\bigg( e^{-\beta t}
	H(u_\tau(t))\bigg)\right]
	&\le C,\\\label{exp-mom-um}
		\E \left[\exp\bigg( e^{-\beta t_m}
		H(u_m)\bigg)\right]
		&\le C.
	\end{align}
\end{prop}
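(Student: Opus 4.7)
The plan is to mimic the proof of Proposition~\ref{exp-u} interval by interval, exploiting the factorization of the splitting scheme into a Hamiltonian flow (which preserves $H$) and a linear damped stochastic flow (to which the abstract exponential integrability lemma \cite[Lemma 3.1]{CHL16b} applies). The discrete charge decay from Lemma~\ref{dis-char}/Lemma~\ref{dis-char-cn} plays the role of the continuous charge decay (Corollary~\ref{cor-char}) used in Proposition~\ref{exp-u}, and the $\HH^1$--$\HH^2$ a priori bounds from Propositions~\ref{dis-hs}--\ref{h2-um} control the residual polynomial terms that appear in the Itô expansion of $H$.

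For the continuous splitting process $u_\tau$, I fix an interval $T_m=[t_m,t_{m+1})$. Since $u_{\tau,m}^D$ solves the deterministic NLS \eqref{nls-d}, whose flow conserves the energy $H$, one has
$H(u_{\tau,m}^D(t_{m+1}))=H(u_\tau(t_m))$ pathwise. Hence it suffices to track $H(u_{\tau,m}^S(t))$ for $t\in T_m$ started from $u_{\tau,m}^D(t_{m+1})$, where $u_{\tau,m}^S$ satisfies the linear SPDE \eqref{nls-s} with drift $\mu_S(u)=-\alpha u$ and diffusion $\sigma(u)=\bi u Q^{1/2}$. I would compute
\begin{align*}
&DH(u)\mu_S(u)+\tfrac12\mathrm{tr}\bigl[D^2H(u)\sigma(u)\sigma(u)^*\bigr]+e^{-\beta t}\|\sigma(u)^*DH(u)\|^2
\end{align*}
and, exactly as in Proposition~\ref{exp-u}, bound it via Hölder, Young, Gagliardo--Nirenberg and the Sobolev embedding, using the charge decay $\|u_{\tau,m}^S(t)\|^2\le e^{-2at}\|u_0\|^2$ from Lemma~\ref{dis-char} to turn all lower-order terms into $V(t,u_0)$ with $\int_0^\infty e^{-\beta r}V(r,u_0)\,dr<\infty$. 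Choosing $\beta$ large enough so that $-2a-\beta<0$ and the prefactor of $H(u)$ is negative, \cite[Lemma 3.1]{CHL16b} yields
\[
\E\Bigl[\exp\!\bigl(e^{-\beta t}H(u_{\tau,m}^S(t))\bigr)\,\big|\,\F_{t_m}\Bigr]\le \exp\!\Bigl(e^{-\beta t_m}H(u_\tau(t_m))+\int_{t_m}^{t}e^{-\beta r}V(r,u_0)\,dr\Bigr).
\]
Iterating over $m$ (using the tower property together with the exact preservation of $H$ by $\Phi^D$) telescopes into \eqref{exp-mom-ut} with a constant depending only on $H(u_0),\|u_0\|,\alpha,Q$.

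The discrete scheme $u_m$ is analogous, but the deterministic half-step is now Crank--Nicolson rather than the exact NLS flow, and \emph{this is the main obstacle}: $\widehat{\Phi^D_{m,\tau}}$ does not conserve $H$ exactly. I would extract an \textbf{almost}-conservation estimate of the form $H(u_{m+1}^D)\le (1+C\tau)H(u_m)+C\tau R_m$, where $R_m$ is polynomial in $\|u_m\|_{\HH^1}$ and $\|u_{m+1}^D\|_{\HH^1}$, by pairing \eqref{splcn} against test functions analogous to those used in the proof of Proposition~\ref{h2-um} and invoking the uniform bounds in Proposition~\ref{h2-um} together with the charge decay \eqref{cha-spl-cn}. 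Combining this with the stochastic half-step analysis above (now applied to the continuous extension $\widehat u_{\tau,m}^S$ on $T_m$ with initial datum $u_{m+1}^D$) yields a recursion of the type
\[
\E\!\left[\exp\!\bigl(e^{-\beta t_{m+1}}H(u_{m+1})\bigr)\right]\le \E\!\left[\exp\!\bigl(e^{-\beta t_m}(1+C\tau)H(u_m)+e^{-\beta t_m}C\tau\widetilde R_m\bigr)\right],
\]
where $\widetilde R_m$ has moments controlled uniformly in $m$ by Proposition~\ref{h2-um}. The factor $(1+C\tau)$ is absorbed by choosing $\beta$ larger than $C$ (so $e^{-\beta \tau}(1+C\tau)\le 1-c\tau$), after which discrete Gronwall-type iteration combined with a Hölder/Young step to decouple the multiplicative $\widetilde R_m$ term from the exponential produces \eqref{exp-mom-um}. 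The technical difficulty concentrates in verifying the Crank--Nicolson almost-conservation bound with constants that are polynomial in the $\HH^1$-norm (so that Proposition~\ref{h2-um} suffices) rather than in the $\HH^2$-norm.
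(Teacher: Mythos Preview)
Your treatment of \eqref{exp-mom-ut} is essentially the paper's argument: the deterministic step conserves $H$, the linear stochastic step has exactly the same energy evolution as the full equation (because $DH(u)\cdot\bigl(\bi\Delta u+\bi\lambda|u|^2u\bigr)=0$), so one applies \cite[Lemma~3.1]{CHL16b} on each $T_m$ with the same $V(\epsilon,\eta,\cdot,u_0)$ as in Proposition~\ref{exp-u} and telescopes. That part is fine.

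For \eqref{exp-mom-um} you have misidentified the obstacle. The Crank--Nicolson step in \eqref{splcn} \emph{does} conserve $H$ exactly: writing $w:=\Delta u_{m+\frac12}^D+\lambda\frac{|u_m|^2+|u_{m+1}^D|^2}{2}u_{m+\frac12}^D$, the scheme reads $u_{m+1}^D-u_m=\bi\tau w$, and a direct computation using $|a|^2-|b|^2=2\Re\bigl(\overline{(a+b)/2}\,(a-b)\bigr)$ gives
\[
H(u_{m+1}^D)-H(u_m)=-\bigl\langle w,\,u_{m+1}^D-u_m\bigr\rangle=-\tau\langle w,\bi w\rangle=0.
\]
This is precisely why the nonlinearity is discretized as $\frac{|u_m|^2+|u_{m+1}^D|^2}{2}u_{m+\frac12}^D$ rather than, say, $|u_{m+\frac12}^D|^2u_{m+\frac12}^D$. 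Consequently the paper's proof of \eqref{exp-mom-um} is literally the same as that of \eqref{exp-mom-ut}: replace $u_{\tau,m}^D$ by $u_{m+1}^D$ and $u_{\tau,m}^S$ by its continuous extension $\widehat u_{\tau,m}^S$, and nothing else changes. No almost-conservation estimate, no appeal to Proposition~\ref{h2-um}, and---importantly---no need for the $\HH^2$ regularity and $Q^{\frac12}\in\LL_2^2$ assumptions that your route would import, which are \emph{not} among the hypotheses of the statement.
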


\begin{proof}
We first prove the estimation \eqref{exp-mom-ut}.
Since  \eqref{nls-s} has the same energy evolution as 
 \eqref{dnls} and  \eqref{nls-d} possesses the energy conservation law, by Proposition \ref{exp-u} we have in $T_m$ that 
there  always exists 
$\beta >-2a+\|u_0\|^2\sum\limits_k\|\nabla Q^{\frac 12}e_k\|_{L^{\infty}}^2$ 
such that 
\begin{align*}
&\E\left[\exp\left(e^{-\beta t}H(u_\tau(t))\right)\right]\\
&\le \E\left[\exp\left(e^{-\beta t_m}H(u_{\tau,m}^S(t_m))
+\int_{t_m}^te^{-\beta s}V(\epsilon,\eta,s,u_0)ds\right)\right]\\
&\le \E\left[\exp\left(e^{-\beta t_m}H(u_{\tau,m}^D(t_{{m}}))
+\int_{t_m}^te^{-\beta s}V(\epsilon,\eta,s,u_0)ds\right)\right],
\end{align*}
where $V(\epsilon,\eta,s,u_0)$ is the function appearing   in the proof of Proposition \ref{exp-u}.

Repeating the above procedures in each interval, we deduce that 
\begin{align*}
\E\left[\exp\left(e^{-\beta t}H(u_\tau(t))\right)\right]
\le \E\left[\exp\left(H(u_0)
+\int_{0}^te^{-\beta s}V(\epsilon,\eta,s,u_0)ds\right)\right]\le C(\epsilon,\eta,\alpha, Q,u_0),
\end{align*}
which verifies  estimation \eqref{exp-mom-ut}.
Similar arguments yield estimation \eqref{exp-mom-um}.
\end{proof}

\begin{rk}\label{exp-utd}
	Under the  condition of Proposition \ref{exp-ut}, by the same procedures we can obtain that 
	\begin{align}
	\E\left[\exp\bigg( e^{-\beta t}
	H(u_\tau^D(t))\bigg)\right]\le C.
	\end{align}
	\end{rk}

\begin{cor}\label{u-ut-exp}
	Under the  condition of Proposition \ref{exp-ut},
	 there exists a constant $C=C(\alpha,Q,u_0)$ for any $p\ge 1$ such that 
	\begin{align}\label{u-ut-exp0}
	\left\|\exp\left(2\int_0^T  \|u(s)\|_{L^\infty}
	\|u_\tau^D(s)\|_{L^\infty} ds\right)\right\|_{L^p(\Omega)}\le C
	\end{align}
	and 
	\begin{align}\label{ut-um-exp}
	\left\|\exp\left(2\sum_{m\in \mathbb Z_M}  \|u_\tau(t_m) \|_{L^\infty}
	\|u_m\|_{L^\infty} \tau \right)\right\|_{L^p(\Omega)}\le C.
	\end{align}
\end{cor}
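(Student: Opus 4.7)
The plan is to reduce both exponents to time-integrated (resp.\ summed) expressions in the energy functional $H$, then to apply Jensen's inequality to bring the exponential inside the integral, and finally to conclude using the exponential integrability results in Propositions \ref{exp-u}--\ref{exp-ut} and Remark \ref{exp-utd}.

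First, by Young's inequality $2ab\le a^2+b^2$ it suffices to bound the exponential moments of $\int_0^T\|u(s)\|_{L^\infty}^2\,ds$ and $\int_0^T\|u_\tau^D(s)\|_{L^\infty}^2\,ds$ separately. The one-dimensional Gagliardo--Nirenberg inequality $\|v\|_{L^\infty}^2\le C\|v\|\|\nabla v\|$, combined with the charge decays (Lemma \ref{char} for $u$ and Lemma \ref{dis-char} for $u_\tau^D$, using that $\Phi_{m,\tau}^D$ preserves the $L^2$ norm), gives $\|u(s)\|_{L^\infty}^2\le Ce^{-as}\|u_0\|\|\nabla u(s)\|\le C(u_0)e^{-2as}+\tfrac12\|\nabla u(s)\|^2$. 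One more application of Gagliardo--Nirenberg, $\|v\|_{L^4}^4\le 2\|v\|^3\|\nabla v\|$, together with Young's inequality, absorbs the $L^4$ term into the energy: $\|\nabla u(s)\|^2\le 2H(u(s))+Ce^{-6as}\|u_0\|^6$. Combining these yields $\|u(s)\|_{L^\infty}^2\le C_1(u_0)e^{-\gamma s}+C_2 H(u(s))$ for some $\gamma>0$, and analogously with $u_\tau^D$ and $H(u_\tau^D)$ in place of $u$ and $H(u)$.

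Next I apply Jensen's inequality against the uniform measure $ds/T$ on $[0,T]$ to obtain
\[
\exp\Big(2p\int_0^T g(s)\,ds\Big)\le \frac1T\int_0^T\exp\bigl(2pT g(s)\bigr)\,ds,\qquad g\ge 0.
\]
Taking expectation reduces the problem to bounding $\sup_{s\in[0,T]}\E[\exp(cH(u(s)))]$ and $\sup_{s\in[0,T]}\E[\exp(cH(u_\tau^D(s)))]$ for appropriate $c=c(p,T)>0$. These bounds follow from Proposition \ref{exp-u} and Remark \ref{exp-utd} after re-running the proof of Proposition \ref{exp-u} with the constant $c$ absorbed into the choice of $(\beta,\epsilon,\eta)$; this is admissible because the parameter range identified there is an open set. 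The discrete estimate \eqref{ut-um-exp} is handled identically via the discrete Jensen inequality $\exp(\sum_m\tau g_m)\le M^{-1}\sum_m\exp(T g_m)$, together with the exponential integrability of $H(u_m)$ from Proposition \ref{exp-ut}.

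The main obstacle is the constant bookkeeping: the $c$ in the target bound $\E[\exp(cH)]$ depends on $p$ and $T$, while Proposition \ref{exp-u} only asserts exponential integrability for a specific constant; rescaling $H$ requires verifying that a slightly larger $\beta$ (and correspondingly smaller $\epsilon$) still lies in the admissible window imposed by the damping condition $\sup_x(\tfrac12 F_Q(x)-\alpha(x))\le -a$. If strict $T$-independence of the final constant is required, one instead applies Jensen against the probability measure $\mu(ds)=ae^{-as}(1-e^{-aT})^{-1}ds$ so as to exploit directly the exponential decay factor $e^{-\gamma s}$ obtained above, producing a constant that depends on $(\alpha,Q,u_0)$ but not on $T$.
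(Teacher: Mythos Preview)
Your overall strategy---split the product via Young, control $\|u\|_{L^\infty}^2$ by the energy $H$ using Gagliardo--Nirenberg and the charge decay, then apply Jensen---is the same as the paper's. The gap is in the second step: when you write
\[
\|u(s)\|_{L^\infty}^2\le Ce^{-as}\|u_0\|\,\|\nabla u(s)\|\le C(u_0)e^{-2as}+\tfrac12\|\nabla u(s)\|^2,
\]
the Young inequality strips the decay factor $e^{-as}$ off $\|\nabla u(s)\|^2$. After the further reduction you arrive at $\|u(s)\|_{L^\infty}^2\le C_1e^{-\gamma s}+C_2H(u(s))$ with \emph{no} decay on the $H$ term. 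Jensen against $ds/T$ then forces you to bound $\sup_{s}\E[\exp(4pC_2T\,H(u(s)))]$, i.e.\ exponential integrability of $H$ with a coefficient that grows linearly in $T$. Proposition~\ref{exp-u} only gives $\E[\exp(e^{-\beta s}H(u(s)))]\le C$, and re-running it for $cH$ does not help: the quadratic term $\|\sigma^*D(cH)\|^2$ in the exponential integrability lemma scales like $c^2$, so the admissible window for $\beta$ shrinks as $c$ grows and the argument cannot absorb $c\sim T$. Your weighted-Jensen fix does not rescue this either, because the weight $e^{-as}$ only discounts the lower-order piece $C_1e^{-\gamma s}$; the $C_2H(u(s))$ term then acquires an \emph{exponentially growing} coefficient $e^{as}/a$ after Jensen.

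The paper avoids this by never separating the decay from the gradient. It keeps the product $e^{-at}\|\nabla u\|$, factors it as $e^{-(a-\beta/2)t}\cdot e^{-\beta t/2}\|\nabla u\|$ with $\beta<2a$, and applies Jensen against the finite-mass weight $e^{-(a-\beta/2)t}dt$ (total mass $\le (a-\beta/2)^{-1}$, hence $T$-independent). Only \emph{after} Jensen is Young applied to $C(p,u_0)\,e^{-\beta t/2}\|\nabla u\|$, sending all $p$-dependence into an additive constant and leaving exactly $\tfrac{1-\eta}{2}e^{-\beta t}\|\nabla u\|^2\le e^{-\beta t}H(u)+\text{(bounded)}$ with coefficient $1$ on $e^{-\beta t}H$, which is precisely what Proposition~\ref{exp-u} controls. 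If you reorder your argument this way---postpone the Young step until after Jensen, and use Jensen against $e^{-(a-\beta/2)t}dt$ rather than $ds/T$---your proof goes through.
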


\begin{proof}
	By the Cauchy--Schwarz, Gagliardo--Nirenberg and Young inequalities, for $0<\eta<1$ we have
	\begin{align*}
	&\left\|\exp\left(2\int_0^T  \|u(s)\|_{L^\infty}
	\|u_\tau^D(s)\|_{L^\infty} ds\right)\right\|_{L^p(\Omega)} \\
	&\le \left\|\exp\left(\int_0^T  2e^{-at}\|u_0\|  \|\nabla u\| ds\right)\right\|_{L^{2p}(\Omega)}
	\left\|\exp\left(\int_0^T 2e^{-at}\|u_0\|  \|\nabla u^D_\tau\| ds\right)\right\|_{L^{2p}(\Omega)} \\
	%&\le \left\|\exp\left(\int_0^T \frac {2\sqrt 2}{\sqrt {1-\eta}}e^{-at}\|u_0\| 
	 %\sqrt{\frac {1-\eta}{2}}\|\nabla u\| ds\right)\right\|_{L^{2p}(\Omega)}\\
	%&\quad \cdot\left\|\exp\left(\int_0^T \frac {2\sqrt 2}{\sqrt {1-\eta}}e^{-at}\|u_0\| 
	%\sqrt{\frac {1-\eta}{2}}\|\nabla u_\tau^D\| ds\right)\right\|_{L^{2p}(\Omega)}\\
	&\le  \sqrt[2p]{\E\Bigg[\exp\left(\int_0^T \frac {4p\sqrt 2}{\sqrt {1-\eta}}e^{-(a-\frac \beta 2)t}\|u_0\| 
		e^{-\frac \beta 2 t}\sqrt{\frac {1-\eta}{2}}\|\nabla u\| ds\right)\Bigg] }\\
	 &\quad \cdot \sqrt[2p]{\E\Bigg[\exp\left(\int_0^T \frac {4p\sqrt 2}{\sqrt {1-\eta}}e^{-(a-\frac \beta 2)t}\|u_0\| 
	 	e^{-\frac \beta 2 t}\sqrt{\frac {1-\eta}{2}}\|\nabla u\| ds\right)\Bigg]},
	\end{align*}
	where $\beta <2a$ is as presented in Proposition \ref{exp-ut}.
	Then the Jensen, Minkovski and  H\"older  inequalities yield that 
	\begin{align*}
		&\left\|\exp\left(2\int_0^T  \|u(s)\|_{L^\infty}
		\|u_\tau^D(s)\|_{L^\infty} ds\right)\right\|_{L^p(\Omega)} \\
		%&\le  \sqrt[2p]{\E\Bigg[\exp\left(\int_0^T \frac {4p\sqrt 2}{\sqrt {1-\eta}}e^{-(a-\frac \beta 2)t}\|u_0\| 
		%	e^{-\frac \beta 2 t}\sqrt{\frac {1-\eta}{2}}\|\nabla u\| dt\right)\Bigg] }\\
		%&\quad \cdot \sqrt[2p]{\E\Bigg[\exp\left(\int_0^T \frac {4p\sqrt 2}{\sqrt {1-\eta}}e^{-(a-\frac \beta 2)t}\|u_0\| 
		%	e^{-\frac \beta 2 t}\sqrt{\frac {1-\eta}{2}}\|\nabla u\| dt\right)\Bigg]}\\
		&\le \sqrt[2p]{\sup_{t\in[0,T]}\E\Bigg[\exp\left(\frac {4p\sqrt 2(1-e^{-(a-\frac\beta 2)T})}{\sqrt {(1-\eta)(a-\frac \beta 2)}}\|u_0\| 
			e^{-\frac \beta 2 t}\sqrt{\frac {1-\eta}{2}}\|\nabla u\| \right)\Bigg] }\\
		&\quad \cdot \sqrt[2p]{\sup_{t\in[0,T]}\E \Bigg[\exp\left(\frac {4p\sqrt 2(1-e^{-(a-\frac \beta 2)T})}{\sqrt {(1-\eta)(a-\frac \beta 2)}}\|u_0\| 
			e^{-\frac \beta 2 t}\sqrt{\frac {1-\eta}{2}}\|\nabla u_\tau^D\| \right)\Bigg]}\\
		&\le C(a,\beta,\eta,\|u_0\|)\sqrt[2p]{\sup_{t\in[0,T]}\E \Bigg[\exp\left(\frac {(1-\eta)e^{-\beta t}}{2}\|\nabla u(t)\|^2-\frac {e^{-\beta t}}{8\eta}\|u(t)\|^6 \right)}\Bigg]\\
		&\quad \cdot \sqrt[2p]{\sup_{t\in[0,T]}\E \Bigg[\exp\left(\frac {(1-\eta)e^{-\beta t}}{2}\|\nabla u_\tau^D(t)\|^2 -\frac {e^{-\beta t}}{8\eta}\|u(t)\|^6\right)}\Bigg]\\
		&\le C(a,\beta,\eta,\|u_0\|)\sqrt[2p]{\sup_{t\in[0,T]}\E \Bigg[\exp\left(e^{-\beta t}H(u(t)) \right)}\Bigg]\cdot \sqrt[2p]{\sup_{t\in[0,T]}\E \Bigg[\exp\left(e^{-\beta t}H(u_\tau^D(t))\right)}\Bigg].
	\end{align*}
From the above estimations, Propositions \ref{exp-u} and  \ref{exp-ut}  and Remark \ref{exp-utd} yield  \eqref{u-ut-exp0}.
Next, we turn to the discrete case  \eqref{ut-um-exp}. Similarly, the H\"older, Gagliardo--Nirenberg and Jensen inequalities yield that 
\begin{align*}		
&\left\|\exp\left(2\tau \sum_{m\in \mathbb Z_M}  \|u_\tau(t_m) \|_{L^\infty}
\|u_m\|_{L^\infty} \right)\right\|_{L^p(\Omega)}\\
&\le \sqrt[2p]{\E\Bigg[\exp\Big(4p\tau \sum_{m\in \Z_M}e^{-(a-\frac \beta 2)t_m}\|u_0\|e^{-\frac \beta 2t_m} \|\nabla u_\tau(t_m)\| \Big)\Bigg]}
\\
&\quad \cdot \sqrt[2p]{\E\Bigg[\exp\Big(4p\tau \sum_{m\in \Z_M}e^{-(a-\frac \beta 2)t_m}\|u_0\| e^{-\frac \beta 2t_m}\|\nabla u_m\| \Big)\Bigg]}\\
&\le \sqrt[2p]{\sup_{m\in \Z_M}\E\Bigg[\exp\Big( 4\sqrt 2p\frac {1+(a-\frac \beta 2)\tau}{\sqrt{1-\eta}(a-\frac \beta 2)}\|u_0\| e^{-\frac \beta 2t_m}
	\sqrt{\frac {1-\eta}{2}}\|\nabla u_\tau(t_m)\| \Big) \Bigg]}\\
&\quad \cdot
\sqrt[2p]{\sup_{m\in \Z_M} \E\Bigg[\exp\Big( 4p\sqrt 2\frac{1+(a-\frac \beta 2)\tau}{\sqrt{1-\eta}(a-\frac \beta 2)}\|u_0\|e^{-\frac \beta 2t_m}\sqrt{\frac {1-\eta}{2}}\|\nabla u_m\| \Big) \Bigg]}\\
&\le C(a,\beta,\eta,u_0)\sqrt[2p]{\sup_{m\in \Z_M}\E\Bigg[\exp\Big( e^{- \beta t_m}
	H(u_\tau(t_m)) \Big) \Bigg]}\cdot
\sqrt[2p]{\sup_{m\in \Z_M}\E\Bigg[\exp\Big( e^{- \beta t_m}
	H(u_m) \Big) \Bigg]}.
\end{align*}
\end{proof}

Based on these a priori estimations and exponential integrability, 
we can deduce the strong convergence rate for the splitting Crank--Nicolson type scheme. 
We remark that when the damped assumption $\sup\limits_{x\in\mathbb R}(\frac12 F_Q(x)$ $- \alpha(x))\le -a$ does not hold, the strong convergence rate of the proposed scheme can also be obtained. However, we cannot expect that the constant $C$ in the upper estimate of the  strong convergence rate to be independent of time since the a priori estimate depends on the time interval. A similar situation occurs when we 
study the weak order of the proposed scheme.

\begin{tm}
Let  $\alpha \in \HH^{2} $,  $\sup\limits_{x\in\mathbb R}(\frac12 F_Q(x)$ $- \alpha(x))\le -a$, and $Q^\frac12\in \LL_2^2$.
Then for $p\ge 1$, there exists a constant $C=C(\alpha,Q,u_0,p )$ such that
\begin{align*} 
\mathbb E\Big[\sup_{m\in \Z_{M}}\|u(t_m)-u^m\|^p\Big]\le 
C\tau^{\frac p2}.
\end{align*}
\end{tm}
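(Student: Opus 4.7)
The plan is to decompose $e_m:=u(t_m)-u_m$ telescopically, bound the one-step consistency errors in terms of $\HH^2$-moments, and close a discrete Gronwall-type inequality in which the damping rate $a$ from $\sup_x(\tfrac12 F_Q(x)-\alpha(x))\le-a$ beats the exponential factors produced by the cubic nonlinearity, with those factors being controlled uniformly in $T$ by Corollary \ref{u-ut-exp}.

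\textbf{Step 1: telescoping.} Introducing the intermediate state $\Phi^S_{m,\tau}\Phi^D_{m,\tau}u(t_m)$, I split
\[
e_{m+1}=R^{\rm spl}_m+R^{\rm CN}_m+\bigl(\Phi^S_{m,\tau}\widehat{\Phi^D_{m,\tau}}u(t_m)-\Phi^S_{m,\tau}\widehat{\Phi^D_{m,\tau}}u_m\bigr),
\]
where $R^{\rm spl}_m:=u(t_{m+1})-\Phi^S_{m,\tau}\Phi^D_{m,\tau}u(t_m)$ is the Lie-splitting consistency error and $R^{\rm CN}_m:=\Phi^S_{m,\tau}(\Phi^D_{m,\tau}-\widehat{\Phi^D_{m,\tau}})u(t_m)$ is the Crank--Nicolson consistency error on the deterministic NLS step.

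\textbf{Step 2: one-step bounds.} For $R^{\rm spl}_m$ I would write both $u$ and $\Phi^S_{m,\tau}\Phi^D_{m,\tau}u(t_m)$ in mild form on $T_m$, apply It\^o's formula to the $L^2$-norm of the difference, and Taylor-expand the cubic nonlinearity around $u(t_m)$. The commutator between the Hamiltonian drift and the multiplicative noise produces a pathwise $O(\tau)$ integrand whose size is polynomial in $\|u\|_{\HH^2}$, so Propositions \ref{hs}--\ref{dis-hs} and Corollary \ref{u-ut-exp} give $\|R^{\rm spl}_m\|_{L^p(\Omega;\HH)}\le C\tau^{3/2}$ with constant independent of $m$. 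For $R^{\rm CN}_m$, a deterministic midpoint-rule expansion yields pathwise $\|R^{\rm CN}_m\|\le C\tau^{3}(1+\|u(t_m)\|_{\HH^2}^{12})$; taking $L^p(\Omega)$-norm and invoking Proposition \ref{hs} produces $\|R^{\rm CN}_m\|_{L^p(\Omega;\HH)}\le C\tau^{2}$.

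\textbf{Step 3: propagation and Gronwall.} For the last bracket I would apply It\^o's formula to the squared norm of $\Phi^S_{m,\tau}\widehat{\Phi^D_{m,\tau}}(u(t_m)-u_m)$ on $T_m$. The Laplacian is skew-adjoint, the damping together with $\tfrac12 F_Q$ contracts at rate $2a$ by hypothesis, and the cubic coupling contributes a growth factor bounded by
\[
\exp\Bigl(C\tau\,\|u_\tau(t_m)\|_{L^\infty}\|u_m\|_{L^\infty}\Bigr).
\]
Iterating in $m$ collects these factors into $\exp\bigl(C\sum_j\tau\|u_\tau(t_j)\|_{L^\infty}\|u_j\|_{L^\infty}\bigr)$, whose $L^{p}(\Omega)$-norm is bounded uniformly in $M$ and $\tau$ by \eqref{ut-um-exp}. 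H\"older's inequality separates this exponential factor from the telescoped sum of $\|R^{\rm spl}_j\|+\|R^{\rm CN}_j\|$, a discrete Gronwall inequality then yields $\E[\|e_m\|^p]\le C\tau^{p/2}$, and a final Burkholder--Davis--Gundy step applied to the martingale part of the propagated error moves the supremum inside the expectation.

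\textbf{Main obstacle.} The delicate point is Step~3. The naive Lipschitz constant of $\Phi^S_{m,\tau}\widehat{\Phi^D_{m,\tau}}$ in the cubic variable depends polynomially on $\|u\|_{L^\infty}$, so a straightforward Gronwall estimate would produce a constant that grows like $e^{CT}$. The remedy is to push this polynomial factor \emph{inside an exponential} via Young's inequality and to decouple it from the damping contraction through H\"older's inequality; the damping furnishes the geometric factor $e^{-a m\tau}$ while Corollary \ref{u-ut-exp} furnishes a $T$-independent bound on the exponential moment of $\sum_j\tau\|u_\tau(t_j)\|_{L^\infty}\|u_j\|_{L^\infty}$. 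Both mechanisms rely on the hypothesis $\sup_x(\tfrac12 F_Q(x)-\alpha(x))\le-a$, so the argument must choose the small parameters $\epsilon,\eta$ in Proposition \ref{exp-ut} carefully enough that the effective contraction rate stays strictly negative after absorbing the nonlinear factor.
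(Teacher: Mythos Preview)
Your strategy is sound and would yield the theorem, but it differs from the paper's proof, and there are two imprecisions worth flagging.

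\textbf{Difference in decomposition.} The paper does not telescope directly through one-step operators. Instead it introduces the continuous splitting process $u_\tau$ (Lie splitting with the \emph{exact} deterministic NLS flow, no Crank--Nicolson yet) and triangulates: $u(t_m)-u_m = (u(t_m)-u_\tau(t_m)) + (u_\tau(t_m)-u_m)$. The first piece is estimated by applying It\^o's formula to $\|u(t)-u_\tau^S(t)\|^2$ on each $T_m$ and iterating; the exponential factor accumulated there is $\exp\bigl(\int_0^T \|u(s)\|_{L^\infty}\|u_\tau^D(s)\|_{L^\infty}\,ds\bigr)$, controlled by \eqref{u-ut-exp0}. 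The second piece is a purely discrete recursion whose exponential factor is $\exp\bigl(\sum_m \tau\|u_\tau(t_m)\|_{L^\infty}\|u_m\|_{L^\infty}\bigr)$, controlled by \eqref{ut-um-exp}. The advantage of the paper's route is that all the a~priori bounds and the exponential integrability for $u_\tau$ have already been established in Propositions~\ref{dis-hs}--\ref{exp-ut}, so both halves plug directly into prepared estimates. Your route is the classical local-error-plus-stability paradigm; it is more direct but forces you to prove a~priori bounds for the auxiliary states $\widehat{\Phi^D_{m,\tau}}u(t_m)$, which the paper never introduces.

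\textbf{Imprecisions in your sketch.} First, the exponential factor in your Step~3 cannot be the one in \eqref{ut-um-exp}: your propagation term is $\Phi^S_{m,\tau}\widehat{\Phi^D_{m,\tau}}$ applied to $u(t_m)-u_m$, so the Lipschitz constant of the Crank--Nicolson map picks up $\|u(t_m)\|_{L^\infty}$ and $\|\widehat{\Phi^D_{m,\tau}}u(t_m)\|_{L^\infty}$, not $\|u_\tau(t_m)\|_{L^\infty}$. This is fixable because the Crank--Nicolson scheme \eqref{splcn} preserves both charge and energy exactly, so those quantities are governed by $H(u(t_m))$ and Proposition~\ref{exp-u}; you then need an analogue of Corollary~\ref{u-ut-exp} built from \eqref{exp-mom-u} and \eqref{exp-mom-um} rather than invoking \eqref{ut-um-exp} as written. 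Second, your claim of a pathwise $O(\tau^3)$ Crank--Nicolson local error controlled only by $\|u(t_m)\|_{\HH^2}^{12}$ is too optimistic: second order for the NLS midpoint rule requires one more time derivative of the continuous flow, hence $\HH^4$ data (compare the hypotheses of Lemma~\ref{wk-um}). With only $\HH^2$ bounds you get local error $O(\tau^2)$, which is still more than enough for the $\tau^{1/2}$ strong rate.
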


\begin{proof}
For simplicity, we give the proof for $p=2$. The proof of  case $p>2$ can be similarly obtained by  using a priori estimates in higher p-moments of numerical and exact solutions in Sobolev norms.
Similar to the proof in \cite{CHLZ17}, we split the error $\mathbb E\Big[\sup\limits_{m\in \Z_{M}}\|u(t_m)-u^m\|^2\Big]$ as follows:
\begin{align*}
&\E \left[\sup_{m\in \Z_{M}} \|u(t_{m})-u_{m}\|^{2}\right] \\
&\le 2 \E \left[\sup_{m\in \Z_{M}} \|u(t_{m})-u_\tau(t_{m})\|^{2}\right]
+2\E \left[\sup_{m\in \Z_{M}} \|u_\tau(t_{m})-u_{m}\|^{2}\right].
\end{align*}
Denote $e_m:=u(t_{m})-u_\tau(t_{m})$, $\widehat{e_m}:=u_\tau(t_{m})-u_{m}$. We first estimate the first term $\E \left[\sup\limits_{m\in \Z_{M}} \|e_m\|^{2}\right]$. By the It\^o formula, the definition of $u_\tau$, the Gagliardo--Nirenberg inequality, and  arguments  similar to \cite[Theorem 2.2]{CHLZ17}, we get
\begin{align*}
\|e_{m+1}\|^2
&\le \left\|e_m-\int_{t_m}^{t_{m+1}}
\bi \left[\Delta u_{\tau,m}^D+\lambda |u_{\tau,m}^D|^2 u_{\tau,m}^D\right] dr\right\|^2-a\int_{t_m}^{t_{m+1}}\left\| u-u_{\tau,m}^{S}\right\|^2ds \\
&\quad +2\int_{t_m}^{t_{m+1}} \left\<u-u_{\tau,m}^S,
\bi \left[\Delta u+\lambda |u|^2 u \right] \right\>ds\\
&\le (1-(a+\epsilon)\tau)\|e_m\|^2 +2\left\< e_m,\int_{t_m}^{t_{m+1}}  
\bi \left[\Delta u -\Delta u_{\tau,m}^D+\lambda|u|^2 u-\lambda |u_{\tau,m}^D|^2 u_{\tau,m}^D\right] dr \right\>\\
%&\quad +2\left\< e_m,\int_{t_m}^{s}  
%\bi \left[\Delta u -\Delta u_{\tau,m}^D+\lambda|u|^2 u-\lambda |u_{\tau,m}^D|^2 u_{\tau,m}^D\right] dr \right\>\\
&\quad+C(\epsilon,u_0)\tau\int_{t_m}^{t_{m+1}} \left[1+\left\|u_{\tau,m}^D\right\|_{\HH^2}^2+\left\|u_{\tau,m}^S\right\|_{\HH^2}^2+\left\|u_\tau\right\|_{\HH^2}^2+\left\|u\right\|_{\HH^2}^2\right]ds\\
&\quad+C(\epsilon,u_0)\Bigg(\int_{t_m}^{t_{m+1}}\|W(s)-W(t_m)\|_{\HH^1}^2(1+\|u\|_{\HH^2}^2)ds\\
&\quad +\int_{t_m}^{t_{m+1}}\left\|\int_{t_m}^{s}\left(u(r)-u_{\tau,m}^S(r)\right)dW(r)\right\|^2ds+R^m_1 
+R^m_2+R^m_3\Bigg),
\end{align*}
where 
\begin{align*}
R^m_1&:=\int_{t_m}^{t_{m+1}}
\left\|\int_{t_m}^s  \int_{t_m}^r \left[\bi \Delta u
+\bi \lambda |u|^2 u 
-\alpha (u-u_{\tau,m}^S) \right] dr_1dW(r)\right\|\Big(1+\|u\|_{\HH^2}\Big) ds,\\
R^m_2&:=\int_{t_m}^{t_{m+1}}\left\|\int_{t_m}^s \int_{t_m}^{r}
\left[u(r_1)-u_{\tau,m}^S(r_1)\right]dW(r_1)dW(r)\right\|\Big(1+\|u\|_{\HH^2}\Big)ds,\\
R^m_3&:=\int_{t_m}^{t_{m+1}}\left\|\int_{t_m}^s \Big(\int_{t_m}^{r}
\bi\left[\Delta u+\lambda|u|^2u\right]dr_1+\int_{t_m}^{t_{m+1}}
\bi\left[\Delta u^D_\tau+\lambda|u^D_\tau|^2u^D_\tau\right]dr_1\Big)dW(r)\right\|\Big(1+\|u\|_{\HH^2}\Big)ds
\end{align*}	
Integrating by parts, we get 
\begin{align*}
&2\left\< e_m,\int_{t_m}^{t_{m+1}}  
\bi \left[\Delta u -\Delta u_\tau^D+\lambda|u|^2 u-\lambda |u_{\tau,m}^D|^2 u_{\tau,m}^D\right] ds \right\>\\
&=2\int_{t_m}^{t_{m+1}}  
\left\<\Delta e_m, \bi \left[u-u_{\tau,m}^D \right] \right\> ds
+2\lambda \int_{t_m}^{t_{m+1}}  \left\< e_m, 
\bi \left[|u|^2 u-|u_{\tau,m}^D|^2 u_{\tau,m}^D \right]\right\>  ds,
\end{align*}
The H\"older inequality,  cubic difference formula $|a|^2a-|b|^2b=(|a|^2+|b|^2)(a-b)+ab(\overline a-\overline b)$,  
the Cauchy-Schwarz and Gagliardo--Nirenberg inequalities imply  
\begin{align*}
&\left\< e_m,\int_{t_m}^{t_{m+1}}  
\bi \left[\Delta u -\Delta u_{\tau,m}^D+\lambda|u|^2 u-\lambda |u_{\tau,m}^D|^2 u_{\tau,m}^D\right] ds \right\>\\
&\le \Big(\frac \epsilon 2\tau +\int_{t_m}^{t_{m+1}} 
\|u\|_{L_{\infty}}\|u_{\tau,m}^D\|_{L_{\infty}} ds\Big)\|e_m\|^2+C(\epsilon)\int_{t_m}^{t_{m+1}} 
\left\| \int_{t_m}^s u(r)dW(r)\right\|^2_{\HH^2} ds\\
&\quad+C(\epsilon,u_0) \int_{t_m}^{t_{m+1}}\Bigg( \|u(s)\|_{\HH^1}^2
+\|u_{\tau,m}^D(s)\|_{\HH^1}^2 \Bigg) \Bigg(\tau \int_{t_m}^{s} \Big(1+\|u(r)\|_{\HH^2}^2
+\|u_{\tau,m}^D(t)\|_{\HH^2}^2 \Big)dr \\
&\quad+ \left\|\int_{t_m}^su(r)dW(r)\right\|_{\HH^2}^2
\Bigg) ds+
C(\epsilon,u_0)\tau\int_{t_m}^{t_{m+1}} \left[1+\left\|u_{\tau,m}^D\right\|_{\HH^2}^2+\left\|u_{\tau,m}\right\|_{\HH^2}^2+\left\|u\right\|_{\HH^2}^2\right]ds.
\end{align*}
Thus we conclude that 
\begin{align*}
\|e_{m+1}\|^2
&\le \|e_m\|^2+\left(-(a-2\epsilon)\tau+2\int_{t_m}^{t_{m+1}} \|u(s)\|_{L_{\infty}}\|u_{\tau,m}^D(s)\|_{L_{\infty}}ds \right) \|e_m\|^2   \\
&\quad+C\Bigg(\tau\int_{t_m}^{t_{m+1}} \left[1+\left\|u_{\tau,m}^D\right\|_{\HH^2}^4+\left\|u_{\tau,m}\right\|_{\HH^2}^4
+\left\|u\right\|_{\HH^2}^4\right]ds\Bigg)\\
&\quad+R_1^m+R_2^m+R_3^m+
C\Bigg(\int_{t_m}^{t_{m+1}}\|W(s)-W(t_m)\|_{\HH^1}^2\Big(1+\|u\|_{\HH^2}^2\Big)ds\Bigg)\\
&\quad+
C\Bigg(\int_{t_m}^{t_{m+1}}\left\|\int_{t_m}^{s}\left(u(r)-u_{\tau,m}^S(r)\right)dW(r)\right\|^2ds\Bigg)\\
&\quad+
C\Bigg(\int_{t_m}^{t_{m+1}}\Big(1+ \|u(s)\|_{\HH^1}^2
+\|u_{\tau,m}^D(s)\|_{\HH^1}^2 \Big)\left\|\int_{t_m}^su(r)dW(r)\right\|_{\HH^2}^2ds\Bigg)\\
&\quad=:\|e_m\|^2+\left(-(a-2\epsilon)\tau+2\int_{t_m}^{t_{m+1}} \|u(s)\|_{L_{\infty}}\|u_{\tau,m}^D(s)\|_{L_{\infty}}ds \right) \|e_m\|^2\\ 
&\quad+R_0^m+R_1^m+R_2^m+R_3^m+R_4^m+R_5^m+R_6^m.
\end{align*}
Then repeating the above procedures yields that 
\begin{align*}
&\|e_{m+1}\|^2\\
&\le \exp\Bigg(-(a-2\epsilon)(m+1)\tau +\int_{0}^{t_{m+1}}\|u(s)\|_{L_{\infty}}\|u_\tau^D(s)\|_{L_{\infty}}ds\Bigg)\|e_0\|^2\\
&\quad+\sum_{k=1}^{m+1}\sum_{i=0}^6\exp\Bigg(-(a-2\epsilon)(m+1-k)\tau +\int_{t_k}^{t_{m+1}}\|u(s)\|_{L_{\infty}}\|u_{\tau,m}^D(s)\|_{L_{\infty}}ds\Bigg)R_i^{k-1}.
\end{align*}
where terms $R_i^j$, $i=0,\dots,6$, $j=0,\dots,M-1$, can be
controlled by Propositions \ref{hs}, \ref{dis-hs}, and  \ref{h2-um}. We omit the detailed computations which are similar
to \cite[Lemma 2.4]{CHLZ17} and obtain $\|R_i^j\|_{L^2(\Omega)}\le C\tau^2$. 
The exponential moment is bounded by the estimation \eqref{u-ut-exp0} of Corollary \ref{u-ut-exp}.
Thus we get  for $\tau<1$,
\begin{align*}
\E[\|e_{m+1}\|^2]
&\le C\tau^2\frac {1- \exp\big(-(a-2\epsilon)T\big)}{1-\exp\big(-(a-2\epsilon)\tau\big)}\le C\tau.
\end{align*}
In fact, we can obtain the stronger result, 
\begin{align*}
\sup_{m\in Z_{M-1}}\E\left[\|e_{m+1}\|^2\right]
&\le \sum_{k=1}^{M}\sum_{i=0}^6 \Big\|R_i^{k-1}\Big\|_{L^2(\Omega)}\Bigg\|\exp\Bigg(-(a-2\epsilon)(m+1-k)\tau \\
&\quad+\int_{t_k}^{t_{M}}\|u(s)\|_{L_{\infty}}\|u_{\tau,m}^D(s)\|_{L_{\infty}}ds\Bigg)\Bigg\|_{L^2(\Omega)}
\le C\tau,
\end{align*}

\iffalse
By \cite[Lemma 2.6]{CHLZ}, we have 
\begin{align*}
\sup_{m\in \mathbb Z_M}\|e_{m}\|^2  
&\le C\exp\left(-(a-2\epsilon)T+2 \int_0^{T} \|u(t)\|_{L_{\infty}} \|u_{\tau,m}^D(t)\|_{L_{\infty}} dt\right)\\
&\quad  \cdot \Bigg(\sum_{k=0}^{M-1} R_0^k+R_1^k+R_2^k+R_3^k+R_4^k\Bigg).
\end{align*}
Then taking expectation yields that 
\begin{align*}
&\E\Big[\sup_{m\in \mathbb Z_M}\|e_{m}\|^2\Big]\\ 
&\le C(\epsilon)\exp\left(-(a-2\epsilon)T\right)\Bigg\|\exp\left(2 \int_0^{t_{m+1}} \|u(t)\|_{L_{\infty}} \|u_{\tau,m}^D(t)\|_{L_{\infty}} dt\right)\Bigg\|_{L^2(\Omega)}\\
&\Bigg(\sum_{k=0}^{M-1} \|R_0^k\|_{L^2(\Omega)}+\|R_1^k\|_{L^2(\Omega)}+\|R_2^k\|_{L^2(\Omega)}+\|R_3^k\|_{L^2(\Omega)}+\|R_4^k\|_{L^2(\Omega)}
+\|R_5^k\|_{L^2(\Omega)}\Bigg).
\end{align*}
The exponential moment is bounded by the estimation \eqref{u-ut-exp0} of Corollary \ref{u-ut-exp}, This implies
 \begin{align*}
 \E\Big[\sup_{m\in \mathbb Z_M}\|e_{m}\|^2\Big] 
 \le  C(\epsilon,u_0,Q)\tau.
 \end{align*}
 \fi
Next, we estimate the term $\E \left[\sup\limits_{m\in \Z_{M}} \|\widehat {e_m}\|^{2}\right]$.
Similar  to the previous arguments, we get 
\begin{align*}
&\|\widehat {e_{m+1}}\|^2\\ 
&\le\|\widehat {e_m}\|^2+\left(-(a-2\epsilon)\tau+2\tau\left\|u_\tau(t_n)\right\|_{L_{\infty}}\left\|u_n\right\|_{L_\infty} \right) \|\widehat {e_m}\|^2+\Bigg(\tau^2\Big(1+\|u^{D}_{m+1} \|_{\HH^2}^6\\
&\quad +\|u_{m} \|_{\HH^2}^6\Big)+\tau \int_{t_m}^{t_{m+1}}
\Big(\|u_{\tau,m}^S(r)\|_{\HH^2}^2+\|\widehat {u_{\tau,m}^S(r)}\|_{\HH^2}^2
+\| u^{D}_{\tau,m}(r)\|_{\HH^2}^4\Big)dr
\Bigg)\\
&:=\|\widehat {e_m}\|^2+\left(-(a-2\epsilon)\tau+2\tau\left\|u_\tau(t_m)\right\|_{L_{\infty}}\left\|u_m\right\|_{L_\infty} \right) \|\widehat {e_m}\|^2
+\widehat{R^m}
\end{align*}
Then taking expectations on both sides and using the H\"older inequality yields that 
\begin{align*}
&\E \left[\sup_{m\in \Z_{M-1}}\|\widehat {e_{m+1}}\|^2\right]  \\
&\le
\sum_{k=0}^{M-1} \Bigg\|\exp\Bigg(-(a-2\epsilon)(m+1-k)\tau \\
&\quad+\int_{t_k}^{t_{m+1}}\|u(s)\|_{L_{\infty}}\|u_{\tau,m}^D(s)\|_{L_{\infty}}ds\Bigg)\Bigg\|_{L^2(\Omega)}
\Big\|\widehat {R^{k}}\Big\|_{L^2(\Omega)}.
\end{align*}
Then the estimation \eqref{ut-um-exp} in Corollary \ref{u-ut-exp}, combined with a priori estimations in  
Propositions \ref{hs}, \ref{dis-hs} and  \ref{h2-um}, implies that 
\begin{align*}
&\E \left[\sup_{m\in \Z_{M}}\|\widehat {e_{m}}\|^2\right] \le C\tau.
\end{align*}
From the estimations about $e_{m}$ and $\widehat {e_{m}}$, we obtain the strong error estimate
\begin{align*}
\mathbb E\Big[\sup_{m\in \Z_{M}}\|u(t_m)-u^m\|^2\Big]\le 
C\tau.
\end{align*}
\end{proof}

\section{Weak convergence}
In this section, we first study the regularity of the Kolmogrov equation of  \eqref{dnls}.
 With the help of this Kolmogrov equation, we transform the weak error into two parts, 
one is from  the splitting approach and the other is from the deterministic Crank--Nicolson type discretization.
As a consequence, the rate of weak convergence  is shown to be twice that of strong convergence. 
This is the first result about the weak order of numerical schemes approximating the stochastic nonlinear Schr\"odinger equation with nonmonotone coefficients.

It is well known  that $U(t,u_0):=\E\left[\phi(u(t,u_0)) \right]$  satisfies the following infinite-dimensional Kolmogorov
equation  (see e.g. \cite{BD06}):
\begin{equation*}
\left\{
\begin{aligned}
&\frac {dU}{dt}(t,u) = \frac 12 \text{tr} \Big[(\bi uQ^{\frac 12})(\bi uQ^{\frac 12})^*D^2U(t,u)\Big]
+\<\bi \Delta u+\lambda \bi |u|^2u-\alpha u,DU(t,u)\>,  \\
&U(0,u) =\phi(u). \\
\end{aligned}
\right.
\end{equation*}

 In this section, we assume that $\phi \in C^3_b(\HH^1)\cap C^1_b(\HH)$, $\bs\ge2$, $Q^{\frac 12}\in\LL_2^\bs$, $u_0\in \mathbb H^\bs$,  $\alpha \in \HH^{2}$,  and $\sup\limits_{x\in\mathbb R}(\frac12 F_Q(x)- \alpha(x))\le -a$.
To remove the infinitesimal factor, we first eliminate the unbounded Laplacian operator and consider 
$V(t,v)=U(t,S(-t)v)$. Direct calculations show that $V$ satisfies 
\begin{equation}\label{Kol}
\left\{
\begin{aligned}
&\frac {dV}{dt}(t,v) = \frac 12 \text{tr} \Big[(S(t)(\bi (S(-t)v)Q^{\frac 12}))(S(t)(\bi (S(-t)v)Q^{\frac 12}))^*D^2V(t,v)\Big]\\
&\qquad\qquad\quad+\<\lambda \bi S(t)(|S(-t)v|^2(S(-t)v)),DV(t,v)\>-\<S(t)\alpha S(-t)v, DV(t,v)\>,  \\
&V(0,v) =\phi(v). \\
\end{aligned}
\right.
\end{equation}
Now, it can be shown that the functions $U$ and $V$ have the same regularity as the initial data $\phi$. 
Proposition \ref{exp-u} is the key to proving the following regularity result, which generalizes the case of 
Lipschitz drift operators in \cite{BD06}. 

\begin{lm}\label{UV-reg}
The functions $U$ and $V$ are continuous in time with values in \\$C^3(\HH^1)\cap C^1(\HH)$.
\end{lm}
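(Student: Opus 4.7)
The plan is to establish the smoothness of $U$ by analyzing the Fréchet derivatives of the flow map $u_0 \mapsto u(t,u_0)$, and then to transfer regularity to $V$ via the unitarity of the Schrödinger group $S(t)=e^{\bi\Delta t}$ on every $\HH^\bs$. Formally, let $\eta^h(t):=Du(t,u_0)\cdot h$, $\zeta^{h,k}(t):=D^2u(t,u_0)(h,k)$ and $\xi^{h,k,\ell}(t):=D^3u(t,u_0)(h,k,\ell)$ be the first three variations with respect to the initial datum. They satisfy the linearized SPDEs obtained by formal differentiation of \eqref{dnls}; for instance,
\begin{equation*}
d\eta^h=\bigl(\bi\Delta\eta^h+\bi\lambda(2|u|^2\eta^h+u^2\overline{\eta^h})-\alpha\eta^h\bigr)dt+\bi\eta^h\,dW(t),\qquad \eta^h(0)=h,
\end{equation*}
while $\zeta$ and $\xi$ satisfy equations of the same linear structure with inhomogeneous forcing built from products of $u$ with the lower variations. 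Existence of these processes (first in a mild sense, then by bootstrapping to $\HH^1$) is standard once the right moment bounds are in place.

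The core step is to prove a priori estimates of the form
\begin{equation*}
\sup_{t\ge 0}\E\bigl[\|\eta^h(t)\|_{\HH^j}^p\bigr]\le C\|h\|_{\HH^j}^p,\qquad j\in\{0,1\},
\end{equation*}
and analogous polynomial bounds for $\zeta^{h,k}$ and $\xi^{h,k,\ell}$. The argument is an It\^o/Gronwall computation: the damping and diffusion together contribute a dissipation $-2a\|\eta^h\|^2$ thanks to the standing assumption $\sup_x(\tfrac12 F_Q-\alpha)\le -a$, while the cubic term contributes at worst $C\|u(s)\|_{L^\infty}^2\|\eta^h\|^2$. Gronwall therefore yields
\begin{equation*}
\E\bigl[\|\eta^h(t)\|^p\bigr]\le\|h\|^p\,\E\Bigl[\exp\Bigl(Cp\int_0^t\|u(s)\|_{L^\infty}^2\,ds\Bigr)\Bigr],
\end{equation*}
and the Sobolev embedding $\|u\|_{L^\infty}^2\le C(1+H(u))$ combined with the exponential integrability of $H(u)$ proved in Proposition~\ref{exp-u} bounds the right-hand side uniformly in $t$. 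The $\HH^1$-estimate is analogous, replacing the $L^2$-energy by the Lyapunov functional $H$; the higher variations are handled inductively, the only new ingredient being a Hölder decomposition of the forcing into factors already controlled in the previous step.

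Once the moment bounds are available, the chain rule gives
\begin{equation*}
DU(t,u_0)\cdot h=\E\bigl[D\phi(u(t,u_0))\cdot\eta^h(t)\bigr],
\end{equation*}
with the corresponding multilinear formulas for $D^2U$ and $D^3U$ involving $\zeta$, $\xi$, and the higher derivatives of $\phi$. Boundedness of $D^j\phi$ on $\HH^1$ (respectively, of $D\phi$ on $\HH$) together with the variation bounds yields the required membership of $U(t,\cdot)$ in $C^3(\HH^1)\cap C^1(\HH)$, and dominated convergence, applied to the mean-square continuity of $t\mapsto u(t,u_0)$ and of its variations, delivers time-continuity of $U$ and of each of its derivatives. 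Finally, since $S(\pm t)$ is an isometry on every $\HH^s$, the identity $V(t,v)=U(t,S(-t)v)$ transfers the spatial regularity verbatim to $V$, and the joint continuity of $(t,v)\mapsto S(-t)v$ combined with the continuity already proved for $U$ gives the corresponding time-continuity for $V$.

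The main obstacle is closing the moment estimate for the second and third variations: their driving terms contain cubic products of $u$ with $\eta^h$, $\eta^k$, $\eta^\ell$, and bounding these in $L^p(\Omega;\HH^1)$ forces one to control $L^\infty$-norms of all of $u, \eta^h, \eta^k, \eta^\ell$ simultaneously, which in turn requires working in $\HH^1$. This is exactly the place where the exponential integrability of Proposition~\ref{exp-u} becomes indispensable; without it the Gronwall constants would blow up in $t$ and the uniform-in-time boundedness of the derivatives of $U$ would fail.
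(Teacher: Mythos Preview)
Your proposal is correct and follows essentially the same route as the paper: differentiate the flow to obtain the variation processes $\eta^h$, $\zeta^{h,k}$, $\xi^{h,k,\ell}$, control their moments by a pathwise It\^o--Gronwall argument combined with the exponential integrability of Proposition~\ref{exp-u}, feed these into the chain-rule formulas for $D^jU$, and then transfer the regularity to $V$ via the unitarity of $S(t)$ on each $\HH^\bs$. The only cosmetic difference is that the paper estimates $\|\nabla\eta^h\|^2$ directly (by It\^o on $\|\nabla\eta^h\|^2$ plus Burkholder--Davis--Gundy for the nonvanishing martingale term) rather than through the functional $H$ as you suggest.
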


\begin{proof}
	Differentiating $U$, we obtain for $h\in \HH$,
	\begin{align*}
	\<DU(t,u_0),h\>=\E\left[\<D\phi(u(t,u_0)),\eta^h(t)\>\right],
	\end{align*}
	where 
	\begin{equation*}
	\left\{
	\begin{aligned}
	&d\eta^h = \bi \Delta \eta^hdt+\bi \lambda\left(|u|^2\eta^h+2\Re(\bar u\eta^h)u \right)dt -\alpha \eta^hdt
	+\bi \eta^h dW(t)\\
	&\eta^h(0) =h. \\
	\end{aligned}
	\right.
	\end{equation*}
	The It\^o formula yields that 
	\begin{align*}
	\frac 12\|\eta^h(t)\|^2
	&=\frac 12\|h\|^2+\int_0^t \Big(\<\eta^h, \bi \Delta \eta^h\>+
	\<\eta^h, \bi \lambda\left(2|u|^2\eta^h+u^2\overline{\eta^h}\right)\>
	-\<\eta^h,\alpha \eta^h\>\Big)dr\\
	&\quad+\int_0^t \<\eta^h,	-\bi \eta^h dW(r) \>
	+\int_0^t\frac 12 \text{tr}[(-\bi \eta^h Q^{\frac 12})(-\bi \eta^h Q^{\frac 12})^*]dr\\
	&\le \frac 12\|h\|^2+\int_0^t
	\<\eta^h, \bi \lambda u^2\overline{\eta^h} \>dr-a\int_0^t
	\|\eta^h\|^2dr.
	\end{align*}
	By the Gronwall inequality, we obtain 
	\begin{align}\label{eta-h}
	\|\eta^h(t)\|^2\le \exp(-2at) \exp\left(\int_0^T2\|u\|^2_{L_{\infty}}dr\right)\|h\|^2. 
	\end{align}	
	Then taking expectation combined with  Proposition \ref{exp-u} yields that
	\begin{align*}
	\E\left[\sup_{t\in [0,T]}\|\eta^h(t)\|^2\right]
	\le C(u_0)\|h\|^2.
	\end{align*}
	Applying the  It\^o formula to $\|\eta^h\|^p,\; p\ge 2$, we get
	\begin{align*}
	\E\left[\sup_{t\in [0,T]}\|\eta^h(t)\|^p\right]
	\le C(u_0)\|h\|^p,
	\end{align*}	
	which implies that
	\begin{align*}
	\|DU(t,u_0)\|_{\LL(\HH, \R)}\le C(u_0)\|\phi\|_{C_b^1(\HH)}.
	\end{align*}
	Similarly, 
	\begin{align*}
	D^2U(t,u_0)\cdot(h,h)
	=\E\left[D^2\phi(u(t,u_0))\cdot(\eta^h(t),\eta^h(t))
	+D\phi(u(t,u_0))\cdot \xi^h(t) \right]
	\end{align*}			
	with 		
	\begin{equation*}
	\left\{
	\begin{aligned}
	&d\xi^h = \bi \Delta \xi^h dt+\bi \lambda\left(4\Re(\bar u \eta^h)\eta^h+2|\eta^h|^2 u\right)dt\\
	&\qquad\quad+\bi \lambda\left(|u|^2\xi^h+2\Re(\bar u\xi^h)u \right)dt -\alpha \xi^hdt
	+\bi \xi^h dW(t),\\
	&\xi^h(0) =0. \\
	\end{aligned}
	\right.
	\end{equation*}
	Again by the It\^o formula, we obtain 
	\begin{align*}
	&\frac 12\|\xi^h(t)\|^2\\
	&=\int_0^t\Bigg(\<\xi^h ,\bi \Delta \xi^h \>+\< \xi^h, \bi \lambda\left(4\Re(\bar u \eta^h)\eta^h+2|\eta^h|^2 u\right)\>\\
	&\qquad+<\xi^h, \bi \lambda\left(|u|^2\xi^h+2\Re(\bar u\xi^h)u \right)>-\<\xi^h,  \alpha\xi^h\>\Bigg)dr\\
	&\quad+\int_0^t \<\xi^h,-\bi \xi^h dW(r)\>
	+\frac 12\int_0^t \text{tr}[(-\bi \xi^h Q^{\frac 12})(-\bi \xi^h Q^{\frac 12})^*]dr	\\
	&\le\int_0^t\Bigg(\< \xi^h, \bi \lambda\left(4\Re(\bar u \eta^h)\eta^h+2|\eta^h|^2 u\right)\>+<\xi^h, \bi \lambda2\Re(\bar u\xi^h)u>\Bigg)dr
	-a\int_0^t\|\xi^h\|^2dr\\
	&\le \int_0^t-(a-\epsilon)\|\xi^h\|^2+2\|u\|^2_{L^{\infty}}\|\xi^h\|^2dr+\int_0^t C(\epsilon)\|u\|^2\|\eta^h\|^2\|\nabla \eta^h\|^2dr.
	\end{align*}
	Then the Gronwall inequality and the charge evolution of $u$ imply that 			
	\begin{align*}	
	\|\xi^h(t)\|^2\le C\exp\left(\int_0^T4\|u\|_{L^{\infty}}^2dr\right)
	\int_0^Te^{-2ar}\|u_0\|^2\|\eta^h\|^2\|\nabla \eta^h\|^2dr.
	\end{align*}
		After taking expectation, by Proposition \ref{exp-u}, we have 
		\begin{align*}
		\E\left[\sup_{t\in[0,T]}\|\xi^h(t)\|^2\right]
		&\le C(u_0) \sqrt[4]{\sup_{t\in[0,T]}\E\left[\|\eta^h(t)\|^8\right]}
		\sqrt[4]{\sup_{t\in[0,T]}\E\left[\|\nabla \eta^h(t)\|^8\right]}.	
		\end{align*}
		We need to show $\E[\|\nabla \eta^h\|^p]< \infty$. For simplicity, we give the proof for $p=2$.
		The proof of $p>2$ is similar to the previous arguments for $p>2$ in the a priori estimate of $u$ in the  $\HH^1$-norm.
	The It\^o formula, integration by parts, and the  Gagliardo--Nirenberg and Young inequalities show that 
	\begin{align*}
	&\frac 12\|\nabla \eta^h(t)\|^2\\
	&= \frac 12\|\nabla h\|^2+\int_0^t \<-\Delta \eta^h,\bi \Delta \eta^h+\bi \lambda (|u|^2\eta^h+2\Re(\bar u\eta^h)u)-\alpha \eta^h\>dr\\
	&\quad+\int_0^t\<-\Delta \eta^h, \bi \eta^hdW(r)\>
	+\frac 12\int_0^t\text{tr}[(-\bi \nabla(\eta^hQ^{\frac 12}) )(-\bi \nabla(\eta^hQ^{\frac 12}) )^*]dr\\
	&= \frac 12\|\nabla h\|^2+\int_0^t\lambda  \<\nabla  \eta^h,\bi 2\Re(\bar u \nabla u) \eta^h+\bi 2\Re(\bar u \nabla \eta^h)u
	+\bi 2\Re(\nabla \bar u \eta^h)u+\bi 2\Re( \bar u \eta^h)\nabla u\>dr\\
	&\quad-a \int_0^t\|\nabla \eta^h(t)\|^2+\int_0^t\<\nabla \eta^h,  \eta^h (\frac  12\nabla F_Q-\nabla \alpha) \>dr
	+\int_0^t\<\nabla \eta^h,\bi \eta^h d\nabla W(r))\>\\
	&\quad +\int_0^t\frac 12\sum_{k}\<\eta^h\nabla Q^{\frac 12}e_k,\eta^h\nabla Q^{\frac 12}e_k\>dr.
	\end{align*}
	The Gronwall inequality implies that for $s\in[0,t]$,
\begin{align*}
	\|\nabla \eta^h(t)\|^2
	&\le \exp\Big(-2(a-\epsilon)t+\int_0^T4\|u\|_{L^{\infty}}^2dr\Big)\Big( \|\nabla h\|^2+C(\epsilon, \alpha, Q)\int_{0}^T(\|\Delta u\|\|\nabla u\|^2\\
	&\qquad \|u\|+1)
	\|\eta^h\|^2dr
	+\sup_{s\in[0,t]}\Big|\int_0^s\<\nabla \eta^h,\bi \eta^h d\nabla W(r))\>\Big|\Big)
	\end{align*}
Then taking expectation, combined with Corollary \ref{cor-char}, Propositions \ref{hs} and  \ref{exp-u}, the estimation \eqref{eta-h}, and the Burkholder--Davis--Gundy, H\"older, and Young inequalities, leads that for $\frac 1p +\frac 1q=1$, $1<q<2$,
 	\begin{align*}
	&\E[\|\nabla \eta^h(t)\|^2]\\
	&\le e^{-2(a-\epsilon)t}\Big\|\exp\Big(\int_0^T4\|u\|_{L^{\infty}}^2dr\Big)\Big\|_{L^p(\Omega)}\Bigg(\Big\|\sup_{s\in[0,t]}\Big|\int_0^s\<\nabla \eta^h,\bi \eta^h d\nabla W(r)\>\Big|\Big\|_{L^q(\Omega)}\\
	&\qquad\qquad\qquad\quad\|\nabla h\|^2+C(\epsilon, \alpha, Q)\int_{0}^T\Big\|(\|\Delta u\|\|\nabla u\|^2\|u\|+1)
	\|\eta^h\|^2\Big\|_{L^q(\Omega)}dr
	\Bigg)\\
	&\quad
	\le e^{-2(a-\epsilon)t}C(p,\alpha, Q,u_0)\Big(
	\sqrt[q]{\E\Big[\Big(\int_0^t\|\nabla \eta^h\|^2dr\Big)^{\frac q2}\sup_{s\in[0,T]}\| \eta^h(s)\|^q\Big]}+\|\nabla h\|^2+\|h\|^2\Big)\\
	&\quad
	\le e^{-2(a-\epsilon)t}C(p, \alpha, Q,u_0)\Big(
	\int_0^t\epsilon\E\Big[\|\nabla \eta^h\|^2\Big]dr
	+C(\epsilon)(\|\nabla h\|^2+\|h\|^2)\Big)
	\end{align*}
Applying again the Gronwall inequality, we get
	\begin{align*}
	&\E\left[\|\nabla \eta^h(t)\|^2\right]\le C(\epsilon, \alpha, Q, u_0)(\|h\|^2+\|\nabla h\|^2).
	\end{align*}	
	Similar arguments yield that for any $p\ge 2$,
	\begin{align*}
	\left\|\nabla \eta^h(t)\right\|_{L^p(\Omega; \HH)}\le C(u_0)(\|h\|+\|\nabla h\|).
	\end{align*}
	Then we conclude that 
	\begin{align*}
	\E\left[\sup_{t\in[0,T]}\|\xi^h(t)\|^p\right]
	&\le C(u_0)\|h\|^p\|\nabla h\|^p,
	\end{align*}
	which implies that 
	\begin{align*}
	\|D^2U(t,u_0)\|_{\LL(\HH^1\times\HH^1;\R)}\le C(u_0)\max(\|\phi\|_{C^2_b(\HH^1)},\|\phi\|_{C^1_b(\HH)} ).
	\end{align*}
	For the function $V(t,v)=U(t,S(-t)v)$,	
	we have 
	\begin{align*}
	\<DV(t,u_0), h\>_\HH
	=\E\left[\<D\phi(u(t,S(-t)u_0)),\eta^h\>\right],
	\end{align*}
	and 
	\begin{align*}
	DV(t,u_0)\cdot(h,h)
	=\E\left[D^2\phi(u(t,S(-t)u_0))\cdot(\eta^h(t),\eta^h(t))
	+D\phi(u(t,S(t)u_0))\cdot \xi^h(t) \right].
	\end{align*}
	The  unitarity of  $S(t)$, i.e., $\|S(t)u_0\|_{\HH^\bs}=\|u_0\|_{\HH^\bs}$, $\bs\in N$, combined with previous arguments finishes the proof. Similar arguments yield that $U$ and $V$ belong to $C^3(\HH^1)$.
\end{proof}

\begin{rk}
	The above procedures imply the global existence of variational solutions of stochastic NLS equations, which in turn gives the theoretical support to why the phase flow, in any finite time,  preserves the symplectic structure when $\alpha=\frac 12 F_Q$  and   the conformal symplectic structure when $\alpha=a+\frac 12F_Q$ (see, e.g., \cite{CH16, HWZ17}).
\end{rk}

Based on the estimations in Lemma \ref{UV-reg} and the corresponding Kolmogorov equation, we have the following weak convergence
result.

\begin{tm}\label{weak}
	Assume that $\alpha \in \HH^{4}$, $\|Q^{\frac 12}\|_{\LL_2^4}< \infty$, and   $u_0\in \HH^4$. For any $\phi\in C_b^3(\HH^1)\cap C_b^1(\HH)$, there exists a positive 
	constant $C=C(\alpha, Q,u_0,\phi)$ such that 
	\begin{align}\label{wk}
	\left|\E\left[\phi(u(T))\right]- \E\left[\phi(u_M)\right]\right|\le C \tau.
	\end{align}
\end{tm}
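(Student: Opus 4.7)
The plan is to use the Kolmogorov-equation approach by decomposing the weak error through the continuous splitting process $u_\tau$ from \eqref{ut}:
\begin{align*}
\bigl|\E[\phi(u(T))] - \E[\phi(u_M)]\bigr|
\le \bigl|\E[\phi(u(T))] - \E[\phi(u_\tau(T))]\bigr|
+ \bigl|\E[\phi(u_\tau(T))] - \E[\phi(u_M)]\bigr|.
\end{align*}
The first piece is the splitting weak error (Lie-type composition of the NLS and the damped stochastic flows), while the second isolates the Crank--Nicolson discretization error inside the deterministic substep of \eqref{splcn}.

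For the splitting error, I would work with $U(t,v)=\E[\phi(u(t,v))]$, or its Laplacian-removed transform $V$ satisfying \eqref{Kol}, whose regularity in $C^3(\HH^1)\cap C^1(\HH)$ comes from Lemma \ref{UV-reg}. The Markov property and a telescoping identity give
\begin{align*}
\E[\phi(u_\tau(T))] - \E[\phi(u(T))]
= \sum_{m=0}^{M-1}\E\bigl[U(T-t_{m+1}, u_\tau(t_{m+1})) - U(T-t_{m+1}, u(t_{m+1}; t_m, u_\tau(t_m)))\bigr],
\end{align*}
where both arguments share the starting value $u_\tau(t_m)$ at time $t_m$. I would Taylor-expand $U(T-t_{m+1},\cdot)$ and apply It\^o's formula separately along the deterministic flow $\Phi^D_{m,\cdot}$, the stochastic flow $\Phi^S_{m,\cdot}$, and the full exact flow. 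The Kolmogorov equation then cancels the $O(\tau)$ contributions (the classical Lie-splitting consistency, which is strong order $\tfrac12$ and weak order $1$); the residuals couple $DU,D^2U,D^3U$ with double It\^o integrals and iterated nonlinear terms, to be controlled via Lemma \ref{UV-reg}, the a priori estimates of Propositions \ref{hs}, \ref{dis-hs}, and the exponential integrability from Propositions \ref{exp-u}, \ref{exp-ut}. Summing $M=T/\tau$ local errors of size $\tau^2$ gives a global bound $O(\tau)$, and uniformity in $T$ follows from the damping hypothesis.

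For the Crank--Nicolson piece, only the deterministic flow is modified: $u_\tau$ uses $\Phi^D_{m,\tau}$ whereas $u_M$ uses $\widehat{\Phi^D_{m,\tau}}$. Since Crank--Nicolson is classically of order two on smooth cubic-NLS solutions, its local consistency is $O(\tau^3)$ in $\HH$ once the $\HH^4$-moments are controlled, which is guaranteed by Propositions \ref{dis-hs}, \ref{h2-um} under the hypotheses $u_0\in\HH^4$, $\alpha\in\HH^4$, $Q^{1/2}\in\LL_2^4$. Because $\Phi^S_{m,\tau}$ preserves the charge in expectation (Lemma \ref{dis-char}) and the exponential moments (Proposition \ref{exp-ut}), a strong-error argument analogous to the one in the preceding section yields $\|u_\tau(T)-u_M\|_{L^2(\Omega;\HH)}\le C\tau^2$; since $\phi\in C_b^1(\HH)$, the mean value theorem turns this into a weak error $O(\tau^2)$, which is dominated by the splitting term.

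The main obstacle will be the local weak-error bound $O(\tau^2)$ on each splitting step. The cancellation of the $O(\tau)$ pieces rests on the generator identity $\mathcal L_{\mathrm{NLS}}+\mathcal L_{\mathrm{damp}}=\mathcal L_{\mathrm{total}}$ and on the Kolmogorov equation, but the second-order It\^o remainders contain double iterated stochastic integrals entangled with the cubic nonlinearity $|u|^2u$. Bounding them with a $T$-independent constant forces the simultaneous use of $\HH^2$-moment estimates for $u$, $u_\tau$, and $u_\tau^D$, the exponential integrability of $H(u)$ and $H(u_\tau)$, and Corollary \ref{u-ut-exp} to absorb the Gronwall exponentials $\exp\!\bigl(\int\|u\|_{L^\infty}\|u_\tau^D\|_{L^\infty}\,dr\bigr)$ generated by the variational equation for $DU$. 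The damping condition $\sup_{x\in\R}\bigl(\tfrac12 F_Q(x)-\alpha(x)\bigr)\le -a$ is precisely what keeps every such constant independent of $T$, producing the time-uniform bound \eqref{wk}.
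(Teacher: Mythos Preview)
Your approach matches the paper's: the same decomposition through $u_\tau$, the Kolmogorov/telescoping argument via the transformed function $V$ and Lemma~\ref{UV-reg} for the splitting part (Lemma~\ref{wk-ut}), and the $C_b^1(\HH)$ mean-value bound combined with a strong estimate on $\|u_\tau(T)-u_M\|$ for the Crank--Nicolson part (Lemma~\ref{wk-um}).

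One correction on the second piece: with only $\HH^4$ regularity the rational semigroup approximation satisfies $\|(S(\tau)-S_\tau)v\|\le C\tau^2\|v\|_{\HH^4}$, not $O(\tau^3)$ (the latter would require $\HH^6$ data). Hence the local Crank--Nicolson error is $O(\tau^2)$ and the global bound is $\E\|u_\tau(T)-u_M\|\le C\tau$, not $C\tau^2$. This is precisely what the paper obtains in Lemma~\ref{wk-um}, and it already suffices for \eqref{wk} via $\phi\in C_b^1(\HH)$; the $\HH^4$ hypothesis on $u_0$, $\alpha$ and $Q^{1/2}$ is tight for exactly this reason (see the closing remark after Lemma~\ref{wk-um}). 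Also note that the exponential-integrability input enters the weak-error argument only indirectly, through the bounds on $DU,D^2U,D^3U$ in Lemma~\ref{UV-reg}; Corollary~\ref{u-ut-exp} itself is not invoked in the weak analysis.
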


	We aim to give the representation formula of the  weak error and  split 
	 $\E\left[\phi(u(T))\right]-\E\left[\phi(u_M)\right]$
	as  follows:
	\begin{align*}
	\E\left[\phi(u(T))\right]-\E\left[\phi(u_M)\right]
	=\E\left[\phi(u(T))\right]-\E\left[\phi(u_\tau(T))\right]
	+\E	\left[\phi(u_\tau(T))\right]-\E\left[\phi(u_M)\right].
	\end{align*}	
The following lemmas show that the estimate \eqref{wk} holds.

	\begin{lm}\label{wk-ut}
		Assume that  $\alpha \in \HH^{2}$, $\|Q^{\frac 12}\|_{\LL_2^2}< \infty $ and $u_0\in \HH^2$.  For any $\phi\in C_b^3(\HH^1)\cap C_b^1(\HH)$, there exists a positive 
		constant $C=C(\alpha, Q,u_0,\phi)$ such that 
		\begin{align*}
		\left|\E\left[\phi(u(T))\right]- \E\left[\phi(u_\tau(T))\right]\right|\le C \tau.
		\end{align*}
		\end{lm}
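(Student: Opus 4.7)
The plan is to exploit the regularity of the Kolmogorov function established in Lemma~\ref{UV-reg} together with a telescoping argument on the local increments of the splitting process. Set $F(t,v):=U(T-t,v)$; then $F\in C^1([0,T];C^3(\HH^1)\cap C^1(\HH))$ with derivative bounds controlled by $\|\phi\|_{C_b^3(\HH^1)\cap C_b^1(\HH)}$ and polynomials in $\|u_0\|_{\HH^1}$, and $F$ solves the backward Kolmogorov equation $\partial_tF=-\mathcal LF$ with $\mathcal L=\mathcal L^D+\mathcal L^S$ splitting according to the generators of the two sub-flows:
\[
\mathcal L^DG(v)=\<\bi\Delta v+\bi\lambda|v|^2v,DG(v)\>,\qquad \mathcal L^SG(v)=\<-\alpha v,DG(v)\>+\tfrac12\mathrm{tr}\bigl[(\bi vQ^{1/2})(\bi vQ^{1/2})^*D^2G(v)\bigr].
\]
Denoting by $v_m$ the state of the splitting process just before the $m$-th deterministic substep and $w_m:=\Phi^D_\tau v_m$ the state just after, telescoping gives
\[
\E[\phi(u_\tau(T))]-\E[\phi(u(T))]=\sum_{m=0}^{M-1}\E\bigl[F(t_{m+1},v_{m+1})-F(t_m,v_m)\bigr].
\]

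For each $m$ I decompose the local increment into the deterministic jump $v_m\mapsto w_m$ at $t_m$ followed by the stochastic flow $w_m\mapsto v_{m+1}=\Phi^S_\tau w_m$ on $(t_m,t_{m+1})$. The fundamental theorem of calculus along $s\mapsto\Phi^D_sv_m$ yields $\E[F(t_m,w_m)-F(t_m,v_m)]=\int_0^\tau\E[\mathcal L^DF(t_m,\Phi^D_sv_m)]\,ds$, while Ito's formula applied to $t\mapsto F(t,\Phi^S_{t-t_m}w_m)$ combined with $\partial_tF+\mathcal L^SF=-\mathcal L^DF$ gives $\E[F(t_{m+1},v_{m+1})-F(t_m,w_m)]=-\int_0^\tau\E[\mathcal L^DF(t_m+s,\Phi^S_sw_m)]\,ds$. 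Hence the $m$-th local residual is the commutator-type quantity
\[
r_m=\int_0^\tau\E\bigl[\mathcal L^DF(t_m,\Phi^D_sv_m)-\mathcal L^DF(t_m+s,\Phi^S_sw_m)\bigr]ds,
\]
which I intend to bound by $C\tau^2$. Inserting the interpolants $(t_m,v_m)$, $(t_m,w_m)$, $(t_m+s,w_m)$ and Taylor expanding yields four sub-differences controlled respectively by $\partial_s\mathcal L^DF(t,\Phi^D_sv)=D(\mathcal L^DF)\cdot(\bi\Delta\Phi^D_sv+\bi\lambda|\Phi^D_sv|^2\Phi^D_sv)$, the fundamental theorem of calculus along the jump $v_m\mapsto w_m$, the time-derivative identity $\partial_t\mathcal L^DF=-\mathcal L^D\mathcal LF$, and Ito's formula for $\Phi^S_sw_m$. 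Each factor gains an extra power of $s\le\tau$, so $|r_m|\le C\tau^2$ uniformly in $m$; summing with $M\tau=T$ yields the announced $O(\tau)$ estimate.

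The main obstacle is that $\mathcal L^D$ contains the unbounded Laplacian, so crude bounds of the form $|\mathcal L^DF(v)|\le C\|v\|_{\HH^2}\|DF\|_{\HH^{-2}}$ are not available because Lemma~\ref{UV-reg} only controls $DF$ in the topology of $\HH$. My preferred route is to pass to the transformed Kolmogorov function $V(t,v)=U(t,S(-t)v)$ of~\eqref{Kol}, whose generator has only bounded coefficients modulo the unitary group $S(t)$, and to analyse the conjugated splitting process $S(-t)u_\tau(t)$ instead; equivalently one may handle each term $\<\bi\Delta v,DF(v)\>$ directly by pairing $\Delta v\in\HH$ against the Riesz representative $DF(v)\in\HH$, in which case the Taylor residuals are closed using the $\HH^2$-moment bounds of Propositions~\ref{hs} and~\ref{dis-hs} on $v_m,\,w_m,\,\Phi^D_sv_m,\,\Phi^S_sw_m$ together with the $C^3(\HH^1)$-regularity of $F$. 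Exponential integrability from Propositions~\ref{exp-u} and~\ref{exp-ut}, combined with the damping condition $\sup_x(\tfrac12F_Q-\alpha)\le-a$, then delivers a constant $C=C(\alpha,Q,u_0,\phi)$ uniform in $T$.
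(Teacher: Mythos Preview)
Your overall strategy---telescoping along the splitting process and measuring each local defect against the Kolmogorov flow---is the paper's strategy, and your ``preferred route'' of passing to $V(t,v)=U(t,S(-t)v)$ so that the Laplacian is absorbed into the unitary group is exactly what the paper does from the outset: it sets $v_\tau(t)=S(T-t)u_\tau(t)$ and telescopes $V(T-t_k,v_\tau(t_k))$. The paper's local decomposition is more concrete than your commutator expression $r_m$: after It\^o's formula and the mean value theorem it isolates three residuals $\mathcal W_1,\mathcal W_2,\mathcal W_3$, corresponding respectively to the Taylor remainder in $DV$, the cubic mismatch $|u_\tau|^2u_\tau-|u_\tau^D|^2u_\tau^D$ (handled through the algebraic identity $|a|^2a-|b|^2b=(|a|^2+|b|^2)(a-b)+ab(\bar a-\bar b)$), and the increment $v_\tau(t)-v_\tau(t_k)$. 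Each is shown to be $O(e^{-bt_k}\tau^2)$, not merely $O(\tau^2)$; it is this exponential decay in $t_k$---coming from the exponentially decaying moment bounds of Propositions~\ref{h1}--\ref{dis-hs} and Lemma~\ref{dis-char}---that makes $\sum_k e^{-bt_k}\tau^2\le C\tau$ with $C$ independent of $T$. Your sentence ``summing with $M\tau=T$'' followed by an appeal to damping blurs this point: you need the decay \emph{inside} each $r_m$, not as a post-hoc correction.

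One genuine caution on your alternative route. Pairing $\Delta v\in\HH$ against $DF(v)\in\HH$ is fine for $\mathcal L^DF$ itself, but the Taylor residuals you need involve $D(\mathcal L^DF)(v)\cdot h$, which contains $D^2F(v)(\bi\Delta v,h)$. Lemma~\ref{UV-reg} only controls $D^2U$ as a bilinear form on $\HH^1\times\HH^1$, so this forces $\Delta v\in\HH^1$, i.e.\ $v\in\HH^3$---one derivative more than the hypothesis $u_0\in\HH^2$ supplies. The $V$-route avoids this precisely because every argument fed into $D^2V$ is of the form $S(T-t)(\cdot)$ with the factor in $\HH^1$; the $\HH^2$-regularity is used only to bound the \emph{size} of increments such as $(S(t-t_k)-I)u_\tau(t_k)$, never as an input to $D^2V$. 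So your second option is not a true alternative under the stated assumptions; you should commit to the $V$-formulation.
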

		
	\begin{proof}	
	First  we split the error by the local arguments as
	follows: 
	\begin{align*}
	\E\left[\phi(u_\tau(T))\right]-\E\left[\phi(u(T))\right]
	= \sum_{k=0}^{M-1} \Big(\E [V(T-t_{k+1},v_\tau(t_{k+1}))]-\E [V(T-t_{k},v_\tau(t_{k}))]\Big),
	\end{align*}
	where $v_\tau(t)=S(T-t)u_\tau(t)$.
	The definition of $u_\tau $ yields that 
	\begin{align*}
	S(T-t_{k+1})u_\tau(t_{k+1})&=S(T-t_{k})u_\tau(t_{k})+\int_{t_k}^{t_{k+1}}S(T-t)\bi |u_{\tau}^D(t)|^2u_{\tau}^D(t)dt\\
	&\quad +\int_{t_k}^{t_{k+1}}S(T-t)\bi u_\tau^S(t)dW(t)-\int_{t_k}^{t_{k+1}}S(T-t)\alpha u_\tau^S(t)dt.
	\end{align*}
	With the help of the  Kolmogorov equation \eqref{Kol},
	the mean value theorem, and the It\^o formula, we get 
	
	\begin{align*}
	&V(T-t_{k+1},v_\tau(t_{k+1}))-V(T-t_{k},v_\tau(t_{k}))\\
	&=
	-\int_{t_k}^{t_{k+1}} \frac {dV}{dt}dt+\int_{t_k}^{t_{k+1}} \frac 12 
	 \text{tr} \Big[(S(T-t)
	 (\bi (S(-T+t)v_\tau) Q^{\frac 12}))
	 (S(T-t)(\bi (S(-T+t)\\
	 &\qquad v_\tau) Q^{\frac 12}))^*D^2V(T-t,v_\tau)\Big]dt
	 -\int_{t_k}^{t_{k+1}} \Big\<S(T-t)\alpha  S(-T+t)v_\tau, DV(T-t,v_\tau)\Big\>dt\\
	 &\quad +\int_{t_k}^{t_{k+1}} \Big\<\lambda \bi S(T-t)
	 (|u_\tau^D|^2 u_\tau^D), \int_{0}^1 DV(T-t_k, v_\tau(t_k)
	 +\theta \int_{t_k}^{t_{k+1}}S(T-t)\bi |u_\tau^D(s)|^2u_\tau^D(s)ds )d\theta\Big\> dt\\
	  &\quad +\int_{t_k}^{t_{k+1}}\Big\<S(T-t)\bi u_\tau^S(t)dW(t), DV(T-t,v_\tau) \Big\>\\
	 &=\int_{t_k}^{t_{k+1}} \Big\<\lambda \bi S(T-t)
	 (|u_\tau^D|^2 u_\tau^D), \int_{0}^1 DV(T-t_k, v_\tau(t_k)
	 +\theta \int_{t_k}^{t_{k+1}}S(T-t)\bi |u_\tau^D(s)|^2u_\tau^D(s)ds )d\theta\Big\> dt\\
	 &\quad -\int_{t_k}^{t_{k+1}} \Big\<\lambda \bi S(T-t)
	 (|u_\tau|^2 u_\tau),DV(T-t,v_\tau)\Big\>dt
	  +\int_{t_k}^{t_{k+1}}\Big\<S(T-t)\bi u_\tau^S(t)dW(t), DV(T-t,v_\tau) \Big\>.
	\end{align*}
	
	Then taking expectation shows  that
	\begin{align*}
	&\E \Big[V(T-t_{k+1},v_\tau(t_{k+1}))-V(T-t_{k},v_\tau(t_{k}))\Big]\\
	&= \E \Bigg[\int_{t_k}^{t_{k+1}}\Big\<S(T-t)\bi |u_\tau^D|^2u_\tau^D,\int_{0}^1DV(T-t_k, v_\tau(t_k)\\
	&\quad +\theta \int_{t_k}^{t_{k+1}}S(T-t)\bi |u_\tau^D(s)|^2u_\tau^D(s)ds) d\theta -DV(T-t, v_\tau(t_k))\Big\>dt\\
	&\quad -\int_{t_k}^{t_{k+1}} \Big\<\lambda \bi S(T-t)
	(|u_\tau|^2 u_\tau)- \bi S(T-t)
	(|u_\tau^D|^2 u_\tau^D) ,DV(T-t,v_\tau(t_k))\Big\>dt\\
	&\quad -\int_{t_k}^{t_{k+1}} \Big\<\lambda \bi S(T-t)
	(|u_\tau|^2 u_\tau) ,DV(T-t,v_\tau(t))- DV(T-t,v_\tau(t_k))\Big\>dt\Bigg]\\
	& :=\E [\mathcal W_1+\mathcal W_2+\mathcal W_3].
	\end{align*}
Since $u_\tau $ and $u_\tau^D$  are both predictable, combining them with the continuity of $D^2V$ and $D^3V$ and the expansion of $DV$, we get  for some $b_1>0$,
\begin{align*}
\E [\mathcal W_1]
&\le C(u_0)\tau^2\sup_{t\in [0,T]}\sqrt{\E [\|u_\tau^D(t)\|^{12}_{\HH^1}]}\\
&+ \E \Bigg[\int_{t_k}^{t_{k+1}}\Big\<S(T-t)\bi |u_\tau^D|^2u_\tau^D,DV(T-t_k, v_\tau(t_k))-DV(T-t, v_\tau(t_k))\Big\>dt\\
&\le  C(u_0)\tau^2\sup_{t\in [0,T]}\sqrt{\Big(\E [\|u_\tau^D(t)\|^{12}_{\HH^1}] +\E [\|u_\tau^D(t)\|^2_{\HH^1}] \Big)} \le Ce^{-b_1t_k}\tau^2.
\end{align*}
The cubic difference formula yields that
\begin{align*}
\mathcal W_2
&=-\int_{t_k}^{t_{k+1}} \Big\<\lambda \bi S(T-t)\Big(
(|u_\tau|^2 +|u_\tau^D|^2) (u_\tau-
 u_\tau^D)  \Big) ,DV(T-t,v_\tau(t_k))\Big\>dt\\
 &\quad -\int_{t_k}^{t_{k+1}} \Big\<\lambda \bi S(T-t)\Big(
  u_\tau u_\tau^D(\overline{u_\tau} -\overline{u_\tau^D} )
    \Big) ,DV(T-t,v_\tau(t_k))\Big\>dt:=\mathcal W_{21}+\mathcal W_{22}.
\end{align*}
The estimations of $\mathcal W_{21}$ and $\mathcal W_{22}$ are similar, we only give the estimate of first term.
The expressions of $u_\tau^D$ and $u_\tau$ yield that 
\begin{align*}
\mathcal W_{21}
&=\int_{t_k}^{t_{k+1}} \Big\<\lambda \bi S(T-t)\Big(
(|u_\tau|^2 +|u_\tau^D|^2) (u_\tau^D(t)-u_\tau^D(t_k)
) \Big) ,DV(T-t,v_\tau(t_k))\Big\>dt\\
&\quad+
\int_{t_k}^{t_{k+1}} \Big\<\lambda \bi S(T-t)\Big(
(|u_\tau|^2 +|u_\tau^D|^2) (u_\tau^D(t_k)-u_\tau(t)
) \Big) ,DV(T-t,v_\tau(t_k))\Big\>dt\\
&=
\int_{t_k}^{t_{k+1}} \Big\<\lambda \bi S(T-t)\Bigg(
(|u_\tau|^2 +|u_\tau^D|^2) \Big( (S(t-t_k)-I)u_\tau(t_k)
 +\int_{t_k}^t S(t-t_k) \\ 
 &|u_\tau^D(s)|^2u_\tau^D(s)ds
 \Big) \Bigg) ,DV(T-t,v_\tau(t_k))\Big\>dt
-\int_{t_k}^{t_{k+1}} \Big\<\lambda \bi S(T-t)\Bigg(
(|u_\tau|^2 +|u_\tau^D|^2)\\
&\qquad\quad \Big((S(t_{k+1}-t_k)-I)u_\tau(t_k)
+\int_{t_k}^{t_{k+1}}
S(t_{k+1}-s)
|u_\tau^D(s)|^2u_\tau^D(s)ds\\
&\qquad+\int_{t_k}^{t}
\bi u_\tau^SdW(s) -\int_{t_k}^{t} \alpha u_\tau^S(s)ds
\Big) \Bigg) ,DV(T-t,v_\tau(t_k))\Big\>dt.
\end{align*}	
Then the independence of the Wiener process and the property of the  stochastic integral, together with the property of $S(t)$, 
the boundedness of $DV$, and the Gagliardo--Nirenberg inequality, imply that
\begin{align*}
&\E [\mathcal W_{21}]\\
&\le C(u_0)\tau^2\sup_{t\in[t_k,t_{k+1}]}\sqrt{\E \Big[(\|u_\tau\|^2+\|u_\tau^D\|^2)\Big(1+\|u_\tau\|_{\HH^2}^4
+\|u_\tau^D\|_{\HH^2}^4+\|u_\tau\|_{\HH^1}^{12}+\|u_\tau^D\|_{\HH^1}^{12}\Big)\Big]}\\
&\le Ce^{-at_k}\tau^2. 
\end{align*}
Thus we can obtain $
\E [\mathcal W_{2}]\le Ce^{-at_k}\tau^2$ .

 The boundedness of $D^2V$, the continuity of $u_\tau$ and $v_\tau$ in the local interval, the property of the stochastic integral,  and Lemma \ref{dis-char} and \ref{dis-char-cn} imply for $b_2>0$ that 
\begin{align*}
\E[\mathcal W_3]
&\le C\sqrt{\E\Big[\Big(\int_{t_k}^{t_{k+1}}(\|u_\tau(t)\|_{\HH^1}^2+\|u_\tau(t_k)\|_{\HH^1}^2)\|u_\tau(t_k)-u_\tau(t)\|_{\HH^1}
\|v_\tau(t)-v_\tau(t_k)\|_{\HH^1}dt\Big)^2\Big]}\\
&\quad-\E\Bigg[\Big\<\int_{t_k}^{t_{k+1}}\int_{t_k}^t D^2V(T-s,v_\tau(s)) dv_\tau(s)ds,\bi \lambda S(T-t)(|u_\tau(t_k)|^2u_\tau(t_k))\Big\> dt \Bigg]\\
&\quad+C \E\Bigg[\int_{t_k}^{t_{k+1}} \Big\|\int_{0}^1 D^2V(T-t, v_\tau(t_k)
+\theta \int_{t_k}^{t_{k+1}}S(T-t)\bi |u_\tau^D(s)|^2u_\tau^D(s)ds )d\theta\Big\|
\\
&\quad \times \|u_\tau(t_k)\|_{\HH^1}^3  \|\int_{t_k}^{t_{k+1}}S(T-t)\bi |u_\tau^D(s)|^2u_\tau^D(s)ds\|_{\HH^1}  dt\Bigg]\\
&\le Ce^{-b_2t_k}\tau^2.
\end{align*}
The estimations of $\mathcal W_i$, $i=1,2,3,$ yield that 
\begin{align*}
\E\left[\phi(u(T))\right]-\E\left[\phi(u_\tau(T))\right]\le
C\sum_{k=0}^{M-1}e^{-\min(a,b_1,b_2)t_k}\tau^2\le C\tau.
\end{align*}

\end{proof}

Next, we deal with the term $\E	\left[\phi(u_\tau(T))\right]-\E\left[\phi(u_M)\right]$.

\begin{lm}\label{wk-um}
	Assume that $\alpha \in \HH^{4}$, $\|Q^{\frac 12}\|_{\LL_2^4}< \infty$, and   $u_0\in \HH^4$. For any $\phi\in C_b^3(\HH^1)\cap C_b^1(\HH)$, there exists a positive 
	constant $C=C(\alpha, Q,u_0,\phi)$ such that 
	\begin{align*}
	\left|\E\left[\phi(u_\tau(T))\right]- \E\left[\phi(u_M)\right]\right|\le C \tau.
	\end{align*}
\end{lm}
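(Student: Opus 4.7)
The idea is to bypass the Kolmogorov-equation machinery of Lemma \ref{wk-ut} and instead bound the weak error by a strong error between $u_\tau$ and $u_M$, exploiting the fact that these two processes share the \emph{same} stochastic propagator $\Phi_{m,\tau}^S$ and differ only through the replacement of the exact deterministic flow $\Phi_{m,\tau}^D$ by its Crank--Nicolson approximation $\widehat{\Phi_{m,\tau}^D}$. Since $\phi\in C_b^1(\HH)$,
\begin{align*}
\bigl|\E[\phi(u_\tau(T))-\phi(u_M)]\bigr|
\le \|\phi\|_{C_b^1(\HH)}\,\E\bigl[\|u_\tau(T)-u_M\|\bigr],
\end{align*}
so it suffices to prove $\E[\|u_\tau(T)-u_M\|]\le C\tau^2$, which is even stronger than the claimed $O(\tau)$ bound.

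Setting $\epsilon_m:=u_\tau(t_m)-u_m$, the recursions implicit in \eqref{ut} and \eqref{splcn} give
\begin{align*}
\epsilon_{m+1}=\Phi_{m,\tau}^S\Bigl[\bigl(\Phi_{m,\tau}^Du_\tau(t_m)-\Phi_{m,\tau}^Du_m\bigr)+\bigl(\Phi_{m,\tau}^D-\widehat{\Phi_{m,\tau}^D}\bigr)u_m\Bigr].
\end{align*}
Three ingredients drive the analysis: (i) the multiplication operator $\Phi_{m,\tau}^S$ satisfies $\|\Phi_{m,\tau}^Sv\|\le e^{-a\tau}\|v\|$ almost surely, since $|e^{\bi W}|=1$ and the amplitude $e^{(-\alpha+F_Q/2)\tau}$ is bounded by $e^{-a\tau}$ under the damping assumption; (ii) the exact deterministic NLS flow is Lipschitz in $\HH$ with constant $\exp\bigl(C(\|u_\tau(t_m)\|_{\HH^1}+\|u_m\|_{\HH^1})\tau\bigr)$, which, after taking expectations, reduces to a factor $1+C\tau$ thanks to the exponential integrability of Propositions \ref{exp-u} and \ref{exp-ut} and the a priori bounds \eqref{dis-hs-ut} and \eqref{h2-um0}; and (iii) the local Crank--Nicolson truncation error satisfies
\begin{align*}
\bigl\|\Phi_{m,\tau}^Dw-\widehat{\Phi_{m,\tau}^D}w\bigr\|\le C\tau^3\,\mathcal P\bigl(\|w\|_{\HH^4}\bigr)
\end{align*}
for some polynomial $\mathcal P$, obtained by matching Taylor expansions of both sides at the midpoint $t_m+\tau/2$. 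It is at this last step that the strengthened hypotheses $\alpha\in\HH^4$, $Q^{1/2}\in\LL_2^4$ and $u_0\in\HH^4$ enter, because the second time derivative $\partial_t^2u_{\tau,m}^D=\bi\Delta\partial_tu_{\tau,m}^D+\bi\lambda\partial_t(|u_{\tau,m}^D|^2u_{\tau,m}^D)$ must be bounded in $L^2$; the required uniform-in-$m$ moment bounds on $\|u_\tau(t_m)\|_{\HH^4}$ and $\|u_m\|_{\HH^4}$ are then provided by the $\bs=4$ analogues of Propositions \ref{hs}, \ref{dis-hs} and \ref{h2-um}, whose proofs extend verbatim from the $\bs=2$ case by iterating the auxiliary Lyapunov functional $f$ up to the next regularity level.

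Combining (i)--(iii) via H\"older's inequality and the tower property yields a discrete inequality of the form
\begin{align*}
\E\bigl[\|\epsilon_{m+1}\|\bigr]\le e^{-a\tau}(1+C\tau)\E\bigl[\|\epsilon_m\|\bigr]+C\tau^3,
\end{align*}
with $C$ independent of $\tau$ and $T$. Iterating with $\epsilon_0=0$, using $1+C\tau\le e^{C\tau}$, and summing the resulting geometric series exactly as in the final step of Theorem \ref{weak}, one arrives at
\begin{align*}
\E\bigl[\|\epsilon_M\|\bigr]\le C\tau^3\sum_{k=0}^{M-1}e^{-(a-C)(M-k)\tau}\le C\tau^2,
\end{align*}
provided the damping rate $a$ exceeds the accumulated Lipschitz constant $C$; combined with the first display this finishes the proof. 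The main obstacle lies in deriving estimate (iii) in a form suitable for the telescoping argument: one must expand $\Phi_{m,\tau}^Dw$ through the Duhamel formula for \eqref{nls-d} and $\widehat{\Phi_{m,\tau}^D}w$ through its defining implicit equation, then cancel the $O(\tau)$ and $O(\tau^2)$ contributions algebraically by exploiting the midpoint symmetry $u^D_{m+1/2}=\tfrac12(u_m+u^D_{m+1})$ together with the cubic difference identity $|a|^2a-|b|^2b=(|a|^2+|b|^2)(a-b)+ab\,\overline{(a-b)}$; this cancellation is the hallmark of Crank--Nicolson's second-order consistency and produces an $O(\tau^3)$ remainder with only a polynomial dependence on $\|w\|_{\HH^4}$ through the cubic nonlinearity.
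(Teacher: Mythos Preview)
Your overall strategy---reduce the weak error to a strong error via $\phi\in C_b^1(\HH)$, telescope through the shared stochastic propagator $\Phi^S$, and use the $e^{-a\tau}$ contraction of $\Phi^S$ against the local Crank--Nicolson defect---is exactly the paper's approach.  The paper writes the same telescoping sum, isolates $(\widehat{\Phi}^D_k-\Phi^D_k)u_\tau(t_k)$, and sums a geometric series in $e^{-a(T-t_{k+1})}$.

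There is, however, a genuine gap in ingredient (iii).  You claim $\|\Phi^D_{m,\tau}w-\widehat{\Phi}^D_{m,\tau}w\|\le C\tau^3\mathcal P(\|w\|_{\HH^4})$ and justify the $\HH^4$ requirement by saying that $\partial_t^2 u^D_{\tau,m}$ must lie in $L^2$.  But an $O(\tau^3)$ local truncation error for a second-order scheme requires control of the \emph{third} time derivative, and for the NLS flow $\partial_t^3 u$ contains $\Delta^3 u$ and hence needs $\HH^6$.  Equivalently, the rational approximation $S_\tau=(I-\tfrac{i\tau}{2}\Delta)^{-1}(I+\tfrac{i\tau}{2}\Delta)$ to $S(\tau)=e^{i\tau\Delta}$ satisfies only $\|(S(\tau)-S_\tau)v\|\le C\tau^2\|v\|_{\HH^4}$; the $O(\tau^3)$ bound would need $\|v\|_{\HH^6}$.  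The paper accordingly claims only an $O(\tau^2)$ local defect (with $\HH^4$ data) and obtains $\E[\|u_\tau(T)-u_M\|]\le C\tau$, not $C\tau^2$.  This still proves the lemma, but your announced strengthening to $O(\tau^2)$ is not supported under the stated hypotheses.

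A second, smaller issue is the passage from the random Lipschitz constant $\exp\bigl(C\tau(\|u_\tau(t_m)\|_{\HH^1}+\|u_m\|_{\HH^1})\bigr)$ to a deterministic per-step factor $1+C\tau$.  Exponential integrability (Proposition~\ref{exp-ut}, Corollary~\ref{u-ut-exp}) bounds the \emph{product} over all steps after a H\"older split; it does not hand you a clean recursion of the form $\E[\|\epsilon_{m+1}\|]\le e^{-a\tau}(1+C\tau)\E[\|\epsilon_m\|]+\text{local}$.  Your final caveat ``provided the damping rate $a$ exceeds the accumulated Lipschitz constant $C$'' is an extra hypothesis not present in the lemma, and the paper avoids it precisely by bounding the full product through exponential integrability rather than iterating a per-step inequality.
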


\begin{proof}
%In order to obtain the weak convergence order independent of T, we use the continuous dependence of 
%the Crank--Nicolson type scheme of \eqref{nls-d} and the linear damped system \eqref{nls-s}. 
By the damped effect, we  obtain for any $v, w \in \HH $,
\begin{align*}
\|\Phi_{k}^S(v-w)\|\le Ce^{-a\tau}\|v-w\|.
\end{align*}	
\iffalse
On the other hand, by the isometry of $S(t)$ and Gagliardo--Nirenberg inequality, we get
\begin{align*}
\| \Phi_{k}^D(v-w)\|&\le \|S(t_{k+1}-t_k)(v-w)\|+\int_{t_k}^{t_{k+1}}\|S(t_{k+1}-t)|v^D_\tau|^2v^D_\tau-|w_\tau^D|^D w_\tau^D \|dt\\
&\le \|v-w\|+\int_{t_k}^{t_{k+1}}(2\|v^D_\tau\|_{L^{\infty}}^2+2\|w_\tau^D\|_{\infty}^2)\|v^D_\tau-w_\tau^D\|dt\\
&\le \exp\left(2\int_{t_k}^{t_{k+1}}\|v^D_\tau\|_{L^{\infty}}^2+\|w_\tau^D\|_{\infty}^2dt\right)\|v-w\|,
\end{align*}
where $v,w$ are two different value of $v^D_\tau, w^D_\tau$  at $t_k$,  and 
\begin{align*}
&\|\widehat \Phi_{k}^D(v-w)\|\\
&\le \|S_{\delta t}(v-w)\|+\tau\Big\|T_{\delta t}\Big(\frac {|v^D_{k}|^2+|v^D_{k+1}|^2}{2}v^D_{k+\frac 12}
-\frac {|w^D_{k}|^2+|w^D_{k+1}|^2}{2}w^D_{k+\frac 12}\Big)\Big\|\\
&\le \|v-w\|+\tau\Big(\|v^D_{k}\|_{L^{\infty}}^2+\|v^D_{k+1}\|_{L^{\infty}}^2+\|w^D_{k+1}\|_{L^{\infty}}^2+\|w^D_{k}\|_{L^{\infty}}^2\Big)\|v-w\|\\
&\quad+C\tau^2
\Big(\|v^D_k\|_{\HH^1}^5+\|v^D_{k+1}\|_{\HH^1}^5+\|w^D_k\|_{\HH^1}^5+\|w^D_{k+1}\|_{\HH^1}^5\Big).
\end{align*}
where $v^D_k=v$ and $w^D_k=w$.
\fi
Then the total error are divided as follows:
\begin{align*}
&|\phi(u_\tau(T))- \phi(u_M)| \le C\|u_\tau(T)-u_M\|\\
&\le C\sum_{k=0}^{M-1} \left\|\prod_{j=1}^{M-k-1}\big(\Phi_{M-j}^S\widehat \Phi_{M-j}^D\big)\Phi_{k}^S(\widehat \Phi_{k}^D-\Phi_{k}^D) \prod_{l=0}^{k-1} \big(\Phi_{k-1-l}^S \Phi_{k-1-l}^D\big) u_0\right\|\\
&\le C\sum_{k=0}^{M-1}\left\|\prod_{j=1}^{M-k-1}\big(\Phi_{M-j}^S\widehat\Phi_{M-j}^D\big)\Phi_{k}^S(\widehat \Phi_{k}^D-\Phi_{k}^D)u_\tau^D(t_k)\right\|.
%&\le C\sum_{k=0}^{M-1} e^{-a(T-t_{k+1})} \Bigg(e^{4\tau\sum_{j=0}^{M-k-1}\|\widehat u_{M-j}\|_{L^{\infty}}^2} \Big\|(\widehat \Phi_{k}^D-\Phi_{k}^D)u_\tau^D(t_k)\Big\|\\
%&\quad +\Big(1+e^{4\tau\sum_{j=0}^{M-k-1}\| \widehat u_{M-j}\|_{L^{\infty}}^2}+\cdots+e^{4\tau\sum_{j=M-k-2}^{M-k-1}\|\widehat  u_{M-j}\|_{L^{\infty}}^2}\Big)\tau^2\Bigg).
\end{align*}
By the stability of $u_\tau^D$ in $\HH^4$, we have 
\begin{align*}
&\Big\|(\widehat \Phi_{k}^D-\Phi_{k}^D)u_\tau^D(t_k)\Big\|
\le \Big\|\Big(S(t_{k+1}-t_k)-S_{\tau}\Big)u_\tau^D(t_k)\Big\|\\
&\quad +\int_{t_k}^{t_{k+1}}\Big\| S(t_{k+1}-s)|u_\tau^D(s)|^2u_\tau^D(s)-\tau T_{\tau} \frac {|\widehat u_\tau^D(t_k)|^2+|\widehat u^D_{k+1}|^2}{2}\widehat u^D_{k+\frac 12}\Big\|dt\\
&\le C\tau^2\|u_\tau^D(t_k)\|_{\HH^4}+C\tau\Big(\int_{t_k}^{t_{k+1}}\|u_\tau^D(s)\|^5_{\HH^1}ds+\tau\|u_\tau^D(t_k)\|^5_{\HH^1}+\tau\|\widehat u_{k+1}^D\|_{\HH^1}^5\Big),
\end{align*}
where $S_{\tau t}= \frac {1+\frac \bi 2 \Delta \tau}{1-\frac \bi 2 \Delta \tau}$, $T_{\tau t}= \frac 1 {1-\frac \bi 2 \Delta \tau}$.
After taking expectation,  we see that charge evolutions and the continuous dependence on initial data  of $\widehat \Phi_{k}^D,  \Phi_{k}^D, \Phi_{k}^S$, $k\in Z_M$, 
together with the uniform boundedness of $\widehat u_{k}$ and $u^D_{t_k}$ in Proposition \ref{dis-hs},  and Lemma \ref{dis-char-cn}, imply that
\begin{align*}
\E\Big[|\phi(u_\tau(T))- \phi(u_M)| \Big]
&\le 
C\sum_{k=0}^{M-1}e^{-a(T-t_{k+1})}\tau^2\le C\tau.
\end{align*}
   
\begin{rk}
Since we discretize the semigroup $S(\tau)$ by $S_{\tau}$, we need the same high regularity requirement  on $u_0$ as in \cite{BD06} to get a weak order result. 
This approach to analyze weak order of numerical scheme is also available for the conservative stochastic NLS equation ($\alpha = \frac 12F_Q $) and other cases, such as $\|\alpha\|_{\HH^4}< \infty$ and for more general test functions with polynomial growths, i.e., $\phi \in C_p^3(\HH^1)\cap C_p^1(\HH)$.
\end{rk}

\end{proof}

\appendix

\bibliographystyle{amsplain}
\bibliography{bib}
\end{document}